\def\span{\hbox{\rm span}}
\def\dis{\displaystyle}
\def\al{\alpha}
\def\be{\beta}
\def\ga{\gamma}
\def\de{\delta}
\def\ep{\varepsilon}
\def\la{\lambda}
\def\th{\theta}
\def\si{\sigma}
\def\sinh{{\rm sinh}}
\def\cosh{{\rm cosh}}
\def\Om{\Omega}
\def\I{\mathbb I}
\def\R{\mathbb R}
\def\I{\mathbb I}
\def\({\left(}
\def\){\right)}
\def\<{\langle}
\def\>{\rangle}
\DeclareMathOperator{\rank}{rank}
\newtheorem{theorem}{Theorem}[section]
\newtheorem{lemma}[theorem]{Lemma}
\newtheorem{prop}[theorem]{Proposition}
\newtheorem{definition}[theorem]{Definition}
\theoremstyle{remark}
\newtheorem{remark}[theorem]{Remark}
\numberwithin{equation}{section}
\begin{document}
\title{Heteroclinic Solutions in Singularly Perturbed Discontinuous Differential Equations}
\date{\today}
\author{Flaviano Battelli	\\ Department of Industrial Engineering and Mathematics \\  Marche Polytecnic University    \\  Ancona - Italy
\and Michal Fe\v ckan\thanks{Partially supported by the National Natural Science Foundation of China (12161015), the Slovak Research and Development Agency under the contract No. APVV-18-0308 and by the Slovak Grant Agency VEGA No. 1/0084/23 and No. 2/0127/20} \\  Department of Mathematical Analysis and Numerical Analysis   \\  Comenius University in Bratislava   \\ Mlynsk\' a dolina, 842 48 Bratislava -- Slovakia     \\  and Institute of Mathematics, Slovak Academy of Sciences  \\   \v{S}tef\'anikova 49, 814 73 Bratislava -- Slovakia \and JinRong Wang \\ Department of Mathematics \\ Guizhou University, Guiyang  \\ Guizhou 550025 -- China}
\maketitle
\begin{abstract}
We derive Melnikov type conditions for the persistence of heteroclinic solutions in perturbed slowly varying discontinuous differential equations  extending to these equations similar results for continuous differential equations.
\end{abstract}
{\bf Key Words:} discontinuous differential equations; heteroclinic solutions; Melnikov conditions; persistence.

\section{Introduction}
In recent years there has been a great deal in the study of singularly perturbed equations such as
\begin{equation}\label{cont-sin}\begin{array}{l}
	\dot x = f(x,y)	\\
	\dot y = \ep g(x,y).
\end{array}\end{equation}
where $f(x,y)$ and $g(x,y)$ are sufficiently smooth functions. To the best of our knowledge this study started with remarkable papers by Tichonov \cite{T}, Vasil'eva et al. \cite{VB,VBK}. All these results concern the behaviour of the solutions of \eqref{cont-sin} on a finite time scale. Later, Hoppensteadt \cite{Hop1,Hop2} extended the result on an infinite time scale. Then Fenichel \cite{Fen} developed his geometric singularly perturbed theory that has been widely used by other authors as, for example, Szmolyan \cite{Sz}. These last results essentially prove that the $x$ component of the solution is asymptotic to invariant manifolds, sometimes called {\em centre manifolds}, described by equations like $x=X_\pm(y)$ (the case $X_+(y)=X_-(y)$ is allowed). In \cite{Kov} a combination of the Melnikov method and geometric singular perturbation theory is presented. This approach has been then extended in \cite{Ha} to investigate a mechanism of chaos near resonances both in the dissipative and the Hamiltonian context of differential equations.

The basic starting point for this kind of results is that, for all $y$ the frozen equation
\begin{equation}\label{frz}
	\dot x = f(x,y)
\end{equation}
has hyperbolic fixed points, say $u_\pm(y)$, that are bounded functions of $y$ together with their derivatives, and solutions $u_\pm(t,y)$, defined for $t\ge 0$ and $t\le 0$, resp., such that
\begin{equation}\label{asynt}
	\lim_{t\to\infty}u_+(t,y)-u_+(y) = 0, \; \lim_{t\to -\infty}u_-(t,y)-u_-(y) = 0
\end{equation}
uniformly with respect to $y$ and there exists $y_0$ such that $u_-(0,y_0) = u_+(0,y_0)$. Hence, for $y=y_0$ the frozen equation \eqref{frz} has a heteroclinic orbit connecting $u_-(y_0)$ to $u_+(y_0)$. Note that in \cite{B,BL,P2} it is also assumed that $u_\pm(t,y)$ are bounded together with their derivatives. However, in Lemma \ref{Onu+}, we prove that this condition is automatically satisfied when $u_\pm(y)$ are bounded together with their derivatives and \eqref{asynt} holds. The proof of this result is rather technical and may be skipped at a first reading of this paper.

Other relevant results concerning \eqref{cont-sin} appeared in \cite{B,BL,BP,KL,P2}. In \cite{KL}, the second order equation with slowly varying coefficients
\begin{equation}\label{KL}
	\ddot x + x(x-a(\ep t+\theta))(1-x) = 0, \quad \ep>0
\end{equation}
is studied, where $a(y)$ is a $C^1$, $1$-periodic function, such that  $0<a(y)<1$. When $\ep=0$ and $a(y)=\frac{1}{2}$, \eqref{KL} has the heteroclinic solution
\[
	x_{het}(t) = \frac{1}{1+e^{-\frac{t}{\sqrt{2}}}}.
\]
However this heteroclinic solution is broken when $a(y)\ne \frac{1}{2}$. In \cite{KL} it has been proved that if $a(y)$ has a transversal intersection with $a=\frac{1}{2}$ at $y=y_0$, i.e. $2a(y_0)=1$ and $a'(y_0)\ne 0$, then equation \eqref{KL} has a heteroclinic solution. Writing
\[
	x_1=x, \quad x_2=\dot x,	\quad y=\ep t+\al
\]
equation \eqref{KL} reads:
\begin{equation}\label{KLsys}\begin{array}{l}
	\dot x = f(x,y)		\\
	\dot y =\ep	
\end{array}\end{equation}
where
\[
	f(x,y) = \begin{pmatrix}
		x_2 \\  x_1(x_1-1)(x_1-a(y))
	\end{pmatrix}
\]
and $u(t,y_0)=(x_{het}(t),\dot x_{het}(t)$. This solution is a part of a family of solutions $u_\pm(t,y)$ such that
\[
	\lim_{t\to-\infty}u_-(t,y) = (0,0) \quad \lim_{t\to\infty}u_+(t,y) = (1,0)
\]
and it can be proved that the transversality condition on $a(y)$ corresponds to the fact that the real-valued function
\[
	\psi^*[u_-(0,y) - u_+(0,y)]
\]
where $\psi\in\R^n$ is a suitable vector, has a simple zero at $y=y_0$.

In Theorem \ref{main}, we extend these results, as far as the asymptotic behaviour of the $x$-component of the solution is concerned, to discontinuous differential equations that is a differential equation which is described in different ways according to the region the solutions belong to. The first difference with the continuous case is that the way how $u_\pm(t,y)$ passes from one domain into another is important. In this paper we assume that $u_\pm(t,y)$ intersects transversally the boundary of the domain when they pass from the domain into another (see assumption $A_1)$ in Section 2).

Problems like the above where $u(t,\al,y)$ is, instead, a family of periodic solutions depending on some further parameters $\al\in\R$ and $y\in\R^m$, have been studied in \cite{WH,BF2,BF3}. Note that, in \cite{WH}, $f(x,y)$ is replaced by $f_0(x,y)+\ep f_1(x,y,t,\ep)$ and it is assumed that system
\begin{equation}\label{unpert}
	\dot x = f_0(x,y)
\end{equation}
has a one-parameter family of periodic solutions $q(t-\theta,y,\alpha)$ with period $T(y,\alpha)$ being $C^r$ in $(y,\alpha)$ and it is because of the
$t$-dependence of the perturbed equation that the extra variable $\theta$ has been introduced. Then a vector valued function $M^{m/n}(y,\al,\theta)$ is constructed, that they called \emph{subharmonic Melnikov function}, which is a measure of the difference between the starting value and the value of the solution at the time $\frac{m}{n}T$ in a direction transverse to the unperturbed vector field at the starting point and proved that periodic solutions of the perturbed vector field arise near the simple zeros of $M^{m/n}(y,\al,\theta)$. In \cite{BF3} this result concerning the existence of periodic solutions has been extended to discontinuous systems of differential equations as the one we consider here.

Now, let us be more precise and give the definition of the discontinuous differential equation we are studying in this paper.

Let $h(x,y):\R^n\times\R^m\to \R$ be a $C^r$ function, $r\ge 2$, with bounded derivatives, $f_\ell(x,y):\R^n\times\R^m\to \R$,
$\ell = -M,\ldots,N$, be $C^r$-functions, bounded on $\R^n\times\R^m$ together with their derivatives, and $c_{-M}<\ldots<c_0\ldots<c_N$
be real numbers. By discontinuous differential equation we mean an equation like
\begin{equation}\label{pert}
    \begin{array}{l}
        \dot x = f(x,y)	\\
        \dot y = \ep g(x,y,\ep)
    \end{array}
\end{equation}
where $x\in\R^n$, $y\in\R^m$, $\ep\in\R$, $\ep>0$ and
\begin{equation}\label{eff}
	f(x,y) := \left \{\begin{array}{ll}
		f_{-M}(x,y) & \hbox{if $h(x,y)<c_{-M}$}		\\
		f_\ell(x,y) & \hbox{if $c_{\ell-1}<h(x,y)<c_\ell$}		\\
			&	\ell =-M+1,\ldots,N		%			\\
%		f_N(x,y) & \hbox{if $h(x,y)>c_N$}
\end{array}\right .
\end{equation}
It is assumed that the equation
\begin{equation}\label{unpert0}
	\dot x = f_N(x,y) \quad \hbox{[resp. $\dot x = f_{-M}(x,y)$]}
\end{equation}
has a hyperbolic fixed point $x=w_+(y)$ [resp. $x=w_-(y)$] together with continuous, piecewise $C^2$ solutions, $u_+(t,y)$, $t\ge 0$,
$u_-(t,y)$, $t\le 0$, such that
\[
	\lim_{t\to\pm\infty}|u_\pm(t,y) - w_\pm(y)| \to 0
\]
uniformly with respect to $y$, and
\begin{equation}\label{trsv}
	h(u_\pm(t_\ell,y),y)=c_\ell \Rightarrow \inf_{y\in\R^m}|h_x(u_\pm(t_\ell,y),y) \dot u_\pm(t_\ell^\pm ,y)|>0.
\end{equation}
for any $-M\le \ell\le N$, $\ell\ne0$.

Here we used the shorthand $u_\pm(t,y) - w_\pm(y)$ for $u_+(t,y) - w_+(y)$, when $t\ge 0$, and $u_-(t,y) - w_-(y)$, when $t\le 0$. We will use such a shorthand throughout the whole paper.

%In the following we will refer to this last property saying
Note that \eqref{trsv} implies that $u_\pm(t,y)$ intersects transversally the set
\[
	{\cal S}_\ell(y) =  \{ x\in\R^n \mid h(x,y)=c_\ell\},
\]
at $u_\pm(t_\ell,y)$. Then we look for solutions $(x(t,\ep),y(t,\ep))$ of \eqref{pert} such that
\begin{equation}\label{close}
	\sup_{t\in\R_\pm}|x(t,\ep) - u_\pm(t,y(t,\ep))| \ll 1.
\end{equation}
Note that, from \eqref{trsv} it follows that, for any $y\in\R^m$, the set ${\cal S}_\ell(y)$, $\ell=-M,\ldots,N$, $\ell\ne 0$, is an hypersurface in a ball around $u_\pm(t_\ell,y)$ whose radius does not depend on $y$. We emphasize that \eqref{trsv} is all that we need on ${\cal S}_\ell(y)$ for our analysis.
\medskip

To prove the existence of solutions of \eqref{pert} satisfying \eqref{close} we use Lyapunov-Schimtd method, together with a combination of singularly perturbed analysis and a technique for discontinuous dynamical systems, to construct a {\em bifurcation function} whose zeros are associated to solutions of the perturbed equation whose $x$-component satisfies \eqref{close}. This is the content of Theorem \ref{main} where we prove that if a certain generic condition is satisfied then there is a manifold of such solutions.

According to $A_3)$ (see Section 2) $x = w_\pm(y)$ are normally hyperbolic invariant manifolds for the unperturbed system $\dot x = f(x,y)$, $\dot y = 0$. From \cite{BF-1,Sa} these manifolds perturb to invariant manifolds $x=\tilde w_\pm(y,\ep)$ such that
\[
	\sup_{y\in\R^m} |\tilde w_\pm(y,\ep) - w_\pm(y)| \to 0, \hbox{as $\ep\to 0$.}
\]
From Remark \ref{remrem} it follows that $x(t,\ep)$ is asymptotic to the manifold $\tilde w_+(y,\ep)$ as $t\to\infty$ and to $\tilde w_-(y,\ep)$ as $t\to-\infty$. Hence, $(x(t,\ep),y(t,\ep))$ behaves as a heteroclinic solution of equation \eqref{pert} connecting the invariant manifold
$\tilde w_-(y,\ep)$ to $\tilde w_+(y,\ep)$.

The problem studied in this paper has been motivated by \cite{WH2}, where existence and bifurcation theorems are derived for homoclinic orbits in three-dimensional flows that are perturbations of families of planar Hamiltonian system. In this paper we study the problem of persistence of bounded solutions in the discontinuous case \eqref{pert} where $x\in\R^n$, $y\in\R^m$, all functions considered are sufficiently smooth and $\ep\in\R$ is a small parameter, assuming the existence of such an orbit in the unperturbed equation \eqref{unpert0}.

Then in Section 7 we apply Theorem \ref{main} to extend \cite[Theorem 1]{KL} to the discontinuous equation \eqref{pert}. Following \cite{B} we derive a bifurcation function characterizing the persistence of homoclinic solutions of \eqref{pert} from a generic homoclinic solution of the unperturbed system \eqref{unpert}. It can be easily checked that the results of this paper easily extend if we replace $f(x,y)$ with $f(x,y,\ep)=f_0(x,y)+\ep f_1(x,y,\ep)$ (with the same  $h(x,y)$). In this case the unperturbed system will be
\[
\dot x = f_0(x,y), \; \dot y = 0.
\]

The next step is the study of a degenerate case where, for any $y\in\R^m$ the unperturbed discontinuous equation \eqref{unpert} has a piecewise $C^1$ solution $u(t,y)$ heteroclinic to the hyperbolic fixed points $x=w_\pm(y)$.  We plan to perform this study in a forthcoming paper as it is necessary to go into a deeper analysis of the bifurcation function.

\medskip

We now briefly sketch the content of this paper. In Section 2 we provide basic assumptions and define the piecewise smooth heteroclinic solution of the unperturbed system. In Section 3 we recall the definition of exponential dichotomy and extend this notion to discontinuous, piecewise linear, systems with a jump at some points. We also extend to these systems some results concerning existence of bounded solutions on either $t\ge 0$ and $t\le 0$. In our opinion, these results are theirselves interesting as they give the form of the projection of the dichotomy of a linear discontinuous system. Although this is an important point in the proof of Theorem \ref{main}, we think that the proofs in this section can be skipped at a  first reading.

Next, in Section 4, we construct families of bounded solutions and describe them in terms of some parameters. These solutions are continuous and piecewise smooth and give the bounded solutions we look for, when they assume the same value at $t=0$. Then, after having defined  the variational equation in Section 5, in Section 6 we study the \emph{joining condition at $t=0$} which is the bifurcation condition and give a Melnikov-type condition assuring that the bifurcation equation has a manifold of solutions. Finally, in Section 7 we first state some general facts concerning two-dimensional discontinuous equations  depending on a slowly varying parameter and give an example of application of the main result of this paper. In Section 8 we give a hint on a possible extension of the example.
\medskip

In the whole paper we will use the following notation. Given a vector $v$ or a matrix $A$ with $v^T$, (resp. $A^T$) we denote the transpose of
$v$ (resp. $A$).

\section{Notation and basic assumptions}
Let $\Om\subset\R^n$ be a bounded domain,
\[
c_{-M}<\ldots<c_0<\ldots<c_{N}
\]
be real numbers and $h:\Om\times\R^m\to\R$ be a $C^r$-functions, $r\ge 2$, with bounded derivatives. For $\ell=-M,\ldots,N$, we set
\[
	\Om_\ell =\{(x,y)\in\Om\times\R^m \mid c_{i-1}\le h(x,y)<c_i\},
\]
where we set for simplicity, $c_{-M-1}=-\infty$ and let $f_\ell:\Om\times\R^m\to\R^n$ be $C^r$-function, bounded together with their derivatives in $\Om\times\R^m$. We are looking for solutions of equation
\begin{equation}\label{disc}
	\dot x = f_\ell(x,y)	, \quad (x,y)\in \Om_\ell
\end{equation}
which are contained in a compact subset $H$ of $\Om$. Hence it is not restrictive to assume that $\Om=\R^n$. So, from now on, we suppose $\Om=\R^n$.
\medskip

First we give the definition of solutions of equation \eqref{disc} we are considering in this paper
\begin{definition} A continuous, piecewise smooth function $u(t,y)$ is a solution of equation \eqref{disc} on $t\ge 0$ intersecting transversally the sets ${\cal S}_i(y) = \{x\in\Om\mid h(x,y)=c_i\}$, $i=0,\ldots,N-1$, if there exist $\eta>0$ and $0<t_1(y)<\ldots <t_{N}(y)$ such that the following conditions hold for $1\le i\le N-1$ (note that we set $t_0(y)=0$)
\begin{itemize}
	\item[$a_1)$] $\dot u(t,y) = f_{i-1}(u(t,y),y)$ for $t_{i-1}(y)<t<t_i(y)$ and $\dot u(t,y) = f_N(u(t,y),y)$ for $t>t_N(y)$;
	\item[$a_2)$] $h(u(t_i(y),y),y) = c_{i-1}, \quad \hbox{and}\quad h_x(u(t_i(y),y),y) \dot u(t_i(y)^\pm,y) >2\eta$;
	\item[$a_3)$] $c_{i-1}<h(u(t,y),y)<c_i$, for $t_i(y)<t<t_{i+1}(y)$ and $h(u(t,y),y)>c_{N-1}$, for $t>t_N(y)$.
\end{itemize}
Similarly, a continuous, piecewise smooth function $u(t,y)$ is a solution of equation \eqref{disc} on $t\le 0$ intersecting transversally the sets
${\cal S}_j(y) = \{x\in\Om\mid h(x,y)=c_j\}$, $j=-1,\ldots,-M$, if there exist $\eta>0$ and $t_{-M}(y)<\ldots <t_{-1}(y)<0$ such that the following conditions hold for any $-M+1\le j\le -1$:
\begin{itemize}
	\item[$a'_1)$] $\dot u(t,y) = f_{j+1}(u(t,y),y)$ for $t_j(y)<t<t_{j+1}(y)$ and $\dot u(t,y) = f_{-M}(u(t,y),y)$ for $t<t_{-M}(y)$;
	\item[$a'_2)$] $h(u(t_j(y),y),y) = c_j, \quad \hbox{and}\quad h_x(u(t_j(y),y),y) \dot u(t_j(y)^\pm,y)>2\eta$;
	\item[$a'_3)$] $c_j<h(u(t,y),y)<c_{j+1}$, for $t_j(y)<t<t_{j+1}(y)$ and $h(u(t,y),y)<c_{-M}$, for $t<t_{-M}(y)$.
\end{itemize}
\end{definition}
In this paper we assume that continuous, piecewise smooth solution $u_+(t,y)$ and $u_-(t,y)$ of equation \eqref{disc} exist, for $t\ge 0$, resp. $t\le 0$, such that the following conditions also hold.
\begin{itemize}
\item[$A_1)$] $w_0^\pm(y) := u_\pm(0,y)$ and their derivatives are bounded functions and belong to an open and bounded subset $B\subset\R^n$ such that $\overline{B}\times\R^m \subset \Om_0$.
\item[$A_2)$] There exist smooth and bounded functions $w_\pm(y)$ and $\mu_0>0$, such that
\[\begin{array}{l}
	f_N(w_+(y),y)=f_{-M}(w_-(y),y)=0	\\
	h(w_+(y),y)-c_{N-1}>\mu_0, \quad h(w_-(y),y)-c_{-M}<-\mu_0
\end{array}\]
for any $y\in\R^m$ and
\[
	\lim_{t\to\pm\infty} u_\pm(t,y) - w_\pm(y) = 0
\]
uniformly with respect to $y\in\R^m$.
\item[$A_3)$]  For any $y\in\R^m$, $f_{N,x}(w_+(y),y)$ and $f_{-M,x}(w_-(y),y)$ have $k$ eigenvalues with negative real parts and $n-k$ eigenvalues with positive real parts, counted with multiplicities and there exists $\de_0>0$ such that all these eigenvalues satisfy
\[
|Re\lambda(y)|>\delta_0.
\]
\item[$A_4)$] There exists $y_0\in\R^m$ such that $w_0^-(y_0) = w_0^+(y_0)=x_0$.
\end{itemize}
\begin{figure}\centering
	\includegraphics[scale = 0.55]{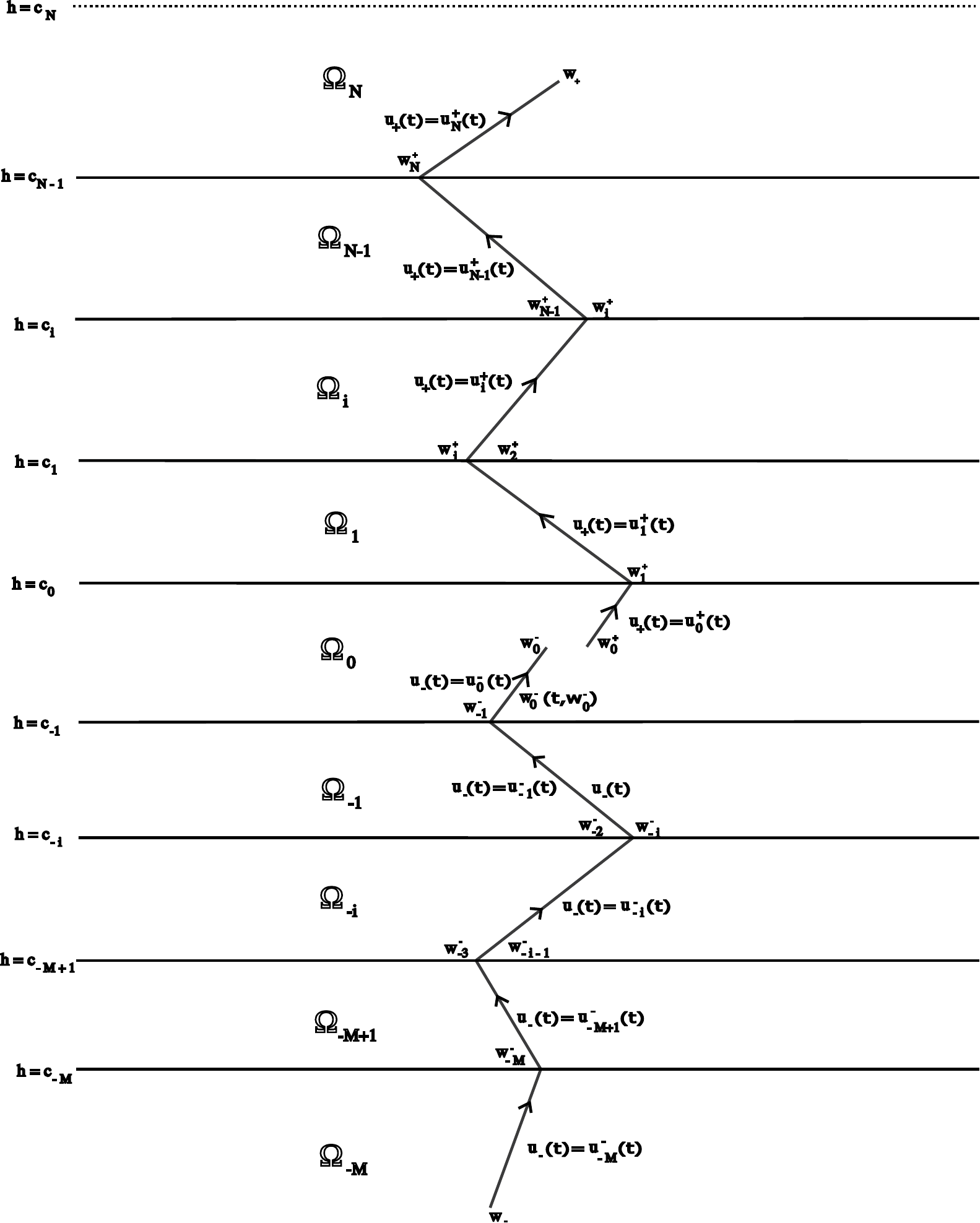}
	\caption{\small The piecewise continuous bounded solution of \eqref{frz}. For simplicity we write $w^\pm_j$ instead of
	$w^\pm_j(y)$. The solutions $u_\pm(t,y)$ go from $\Om_{-M}$ to $\Om_N$ as $t$ goes from $-\infty$ to $\infty$ and may have a
	discontinuity at $t=0$. In this figure we actually draw the  intersection of $\Om_i$ with $y=$constant.\label{fig1}}
\end{figure}
\begin{remark}\label{moregenNew}
From $A_2)-A_4)$ it follows that $u(t,y_0)$ is a continuous, piecewice $C^1$ solution of $\dot x = f(x,y_0)$ such that
\[
\lim_{t\to\pm\infty} u(t,y_0) = w_\pm(y_0).
\]
Moreover,  from $a_2)-a_3)$ it follows that, for $i=1,\ldots,N$:
\[\begin{array}{l}
	\frac{\partial}{\partial t}h(u_+(t,y),y)_{|t=t_i(y)^\pm}\ge 0
\end{array}\]
that is
\[\begin{array}{l}
	h_x(u_+(t_i(y),y) \dot u_+(t_i(y)^\pm,y) \ge 0,
\end{array}\]
Similarly we see that
\[\begin{array}{l}
	h_x(u_-(t_i(y),y) \dot u_-(t_i(y)^\pm,y) \ge 0,
\end{array}\]
So $a_2)-a'_2)$ are a kind of transversality assumption on the solutions $u_\pm(t,y)$.

ii) All results in this paper can be generalised to the case where the solutions exit transversally $\Om_i$ and enter into either $\Om_{i+1}$ or
$\Om_{i-1}$ transversally in the sense that \eqref{trsv} holds at the intersection points of the solution with the boundary of $\Om_i$. More precisely suppose, to fix ideas, $t\ge 0$. Then we assume the following (see Fig.\,\ref{fig2}). There exists $(i_0=0,i_1\ldots,i_M)$ such that given $i_h$ then $i_{h+1}$ is either $i_h-1$ or $i_h+1$ and for $t_h(y)<t<t_{h+1}(y)$ we have
\[
	c_{i_h-1}<h(u_+(t,y),y)<c_{i_h}, \quad \hbox{for any $h=0,\ldots,N-1$}.
\]
Moreover
\[\begin{array}{l}
	|h_x(u_+(t_i(y),y),y) \dot u_+(t_i(y)^\pm,y)| > 2\eta
\end{array}\]
for any $h=1,\ldots,N$.

A similar generalization can be made for $t\le 0$ and all other assumptions are changed accordingly.

Another possible generalization is the homoclinic case. This is the case, for example, where $N=M$ and assumptions $a'_1)-a'_3)$ are changed to the following where we write $i\in \{1,\ldots,N\}$ instead of $-j$ (see Fig.\,\ref{fig3}):
\begin{itemize}
	\item[$a'_1)$] $\dot u(t,y) = f_{i-1}(u(t,y),y)$ for $t_{-i}(y)<t<t_{-(i-1)}(y)$ and $\dot u(t,y) = f_N(u(t,y),y)$ for $t<t_{-N}(y)$;
	\item[$a'_2)$]  $h(u(t_{-i}(y),y),y) = c_{i-1}$ and $h_x(u(t_{-i}(y),y),y) \dot u(t_{-i}(y)^\pm,y)<-2\eta$;
	\item[$a'_3)$] $c_{i-2}<h(u(t,y),y)<c_{i-1}$, $t_{-i}(y)<t<t_{-(i-1)}(y)$ and $h(u(t,y),y)>c_{N-1}$, for $t<t_{-N}(y)$.
\end{itemize}
where, we set for simplicity, $c_{-1}=-\infty$.
\end{remark}
\begin{figure}\centering
	\includegraphics[scale = 0.55]{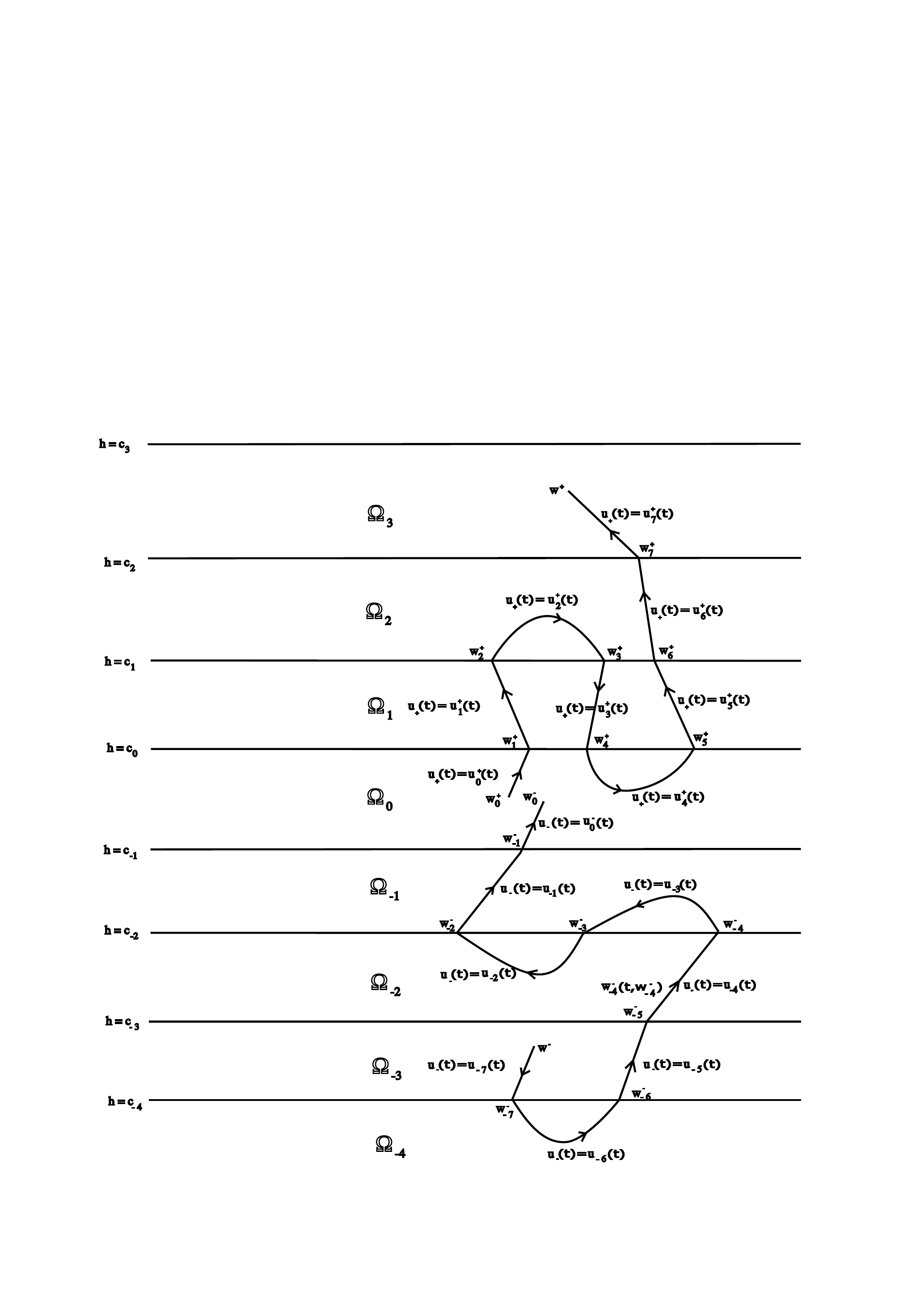}
	\caption{\small An example of the situation described in Remark \ref{moregenNew} with $N=3$ and $M=4$. Here we have
	$(i_{-1},i_{-2},i_{-3},i_{-4},i_{-5},i_{-6},i_{-7})=(-1,-2,-1,-2,-3,-4,-3)$ and $(i_0,i_1,i_2,i_3,i_4,i_5,i_6,i_7)=(0,1,2,1,0,1,2,3)$.
	Again in this figure we draw the  intersection of $\Om_i$ with $y=$constant.\label{fig2}}
\end{figure}
\begin{figure}\centering
	\includegraphics[scale = 0.6]{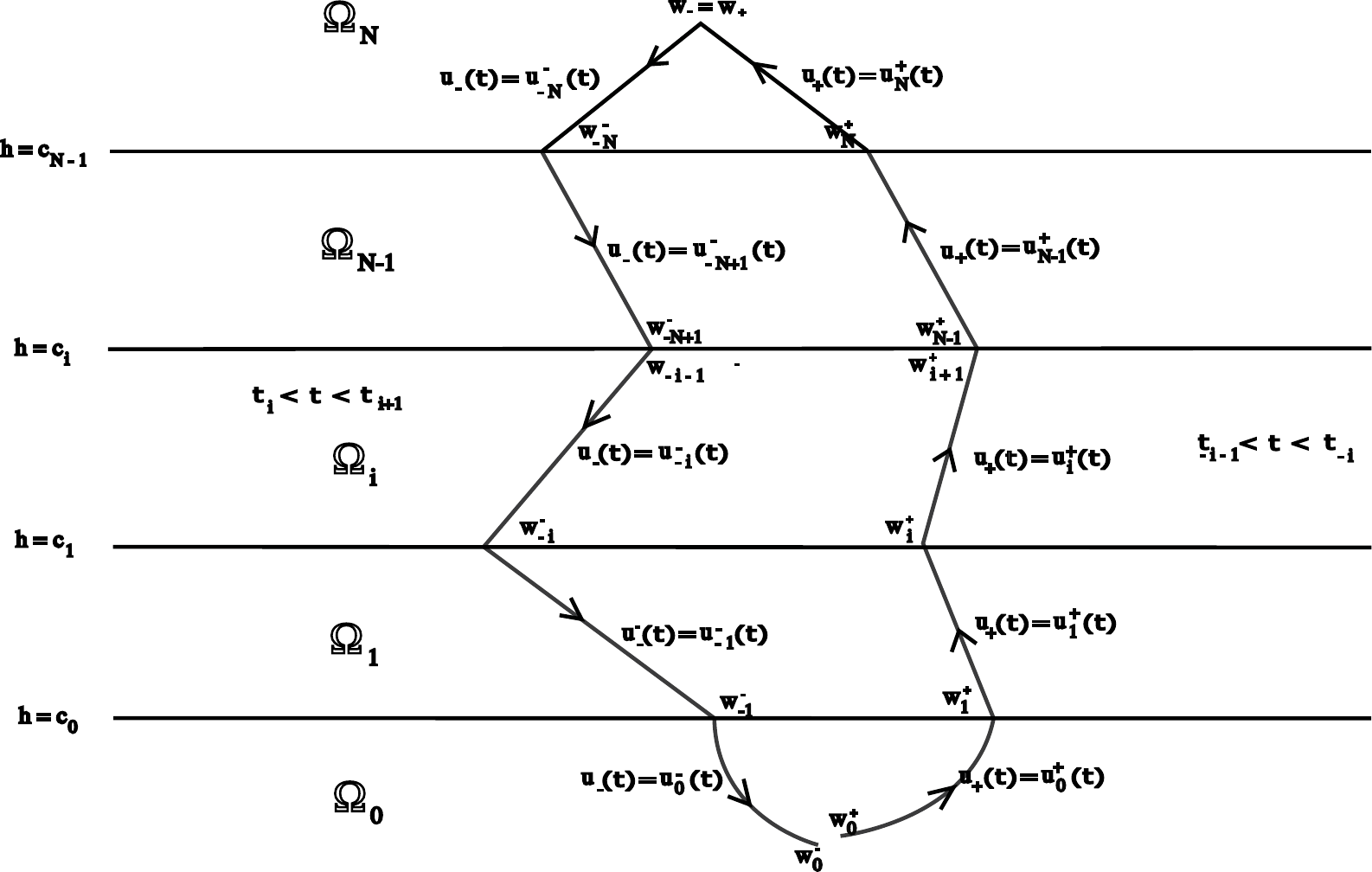}
	\caption{\small An example of homoclinic orbit as described in Remark \ref{moregenNew}. The homoclinic orbit arises at $y=y_0$ when
	$w^+_0(y_0)=w^-_0(y_0)$. Again in this figure we draw the intersection of $\Om_i$ with $y=$constant\label{fig3}}
\end{figure}

We have the following
\begin{lemma}\label{bddw} For $i=0\ldots,N$, $j=0,\ldots,-M$, let $w_i^+(y)=u_+(t_i(y),y)$, $w^-_j(y)=u_-(t_j(y),y)$. Then $w_i^+(y)$ and $w^-_j(y)$ are $C^r$-functions bounded together with their derivatives.
\end{lemma}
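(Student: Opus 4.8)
I would prove the statements for $w_i^+$ and for $w_j^-$ simultaneously, by induction on $i$ (resp.\ on $-j$); the base cases $w_0^+(y)=u_+(0,y)$ and $w_0^-(y)=u_-(0,y)$ are $C^r$, bounded with bounded derivatives, by $A_1)$. The three tools are: the gap estimates in $A_2)$, the transversality at the crossing times (see $a_2)$, $a'_2)$ and \eqref{trsv}), and the classical smooth dependence of solutions of ODEs on initial data and parameters, used together with the fact that each $f_\ell$ is $C^r$ with bounded derivatives on all of $\R^n\times\R^m$.

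\textbf{Step 1 (a uniform bound on the crossing times).} First I would show that there is $T_*>0$, independent of $y$, with $0\le t_i(y)\le T_*$ for $1\le i\le N$ and $-T_*\le t_j(y)\le 0$ for $-M\le j\le -1$, for all $y\in\R^m$. Since $h_x$ is bounded, there is $\rho>0$, independent of $y$, such that $|x-w_+(y)|<\rho$ implies $h(x,y)>c_{N-1}+\mu_0/2$, using $h(w_+(y),y)-c_{N-1}>\mu_0$ from $A_2)$; by the uniform convergence $u_+(t,y)-w_+(y)\to0$ there is $T_*$ with $|u_+(t,y)-w_+(y)|<\rho$ whenever $t\ge T_*$, hence $h(u_+(t,y),y)>c_{N-1}$ there. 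On the other hand $h(u_+(t_N(y),y),y)=c_{N-1}$ (by continuity of $t\mapsto h(u_+(t,y),y)$ and $a_3)$), so $t_N(y)<T_*$, and a fortiori $t_i(y)<T_*$ for $i<N$. The bound on $t_j(y)$ is obtained the same way from the gap $c_{-M}-h(w_-(y),y)>\mu_0$ (enlarging $T_*$ if necessary). This is the crucial step: from here on every estimate is carried out over the \emph{fixed} compact interval $[-T_*,T_*]$, and since $w_0^\pm(y)$ are bounded and $\|f_\ell\|_\infty<\infty$, the solutions $u_\pm(t,y)$ range in a fixed bounded set as $(t,y)$ varies.

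\textbf{Step 2 (the inductive step).} For the inductive step I would assume $t_{i-1}(y)$ and $w_{i-1}^+(y)$ are $C^r$, bounded with bounded derivatives, and put $v_{i-1}(t,y):=\Phi_{i-1}^{\,t-t_{i-1}(y)}(w_{i-1}^+(y),y)$, where $\Phi_{i-1}^s$ is the flow of $\dot x=f_{i-1}(x,y)$; by $a_1)$, $u_+(t,y)=v_{i-1}(t,y)$ for $t_{i-1}(y)\le t\le t_i(y)$. Because $f_{i-1}$ is $C^r$ with bounded derivatives, the variational equations for the $x$- and $y$-derivatives of $\Phi_{i-1}^s(x,y)$ up to order $r$ are linear with bounded coefficients, so over $|s|\le 2T_*$ their solutions stay bounded; composing with the induction hypotheses (and using $t-t_{i-1}(y)\in[-T_*,T_*]$), $v_{i-1}$ is $C^r$ and bounded together with all its $t$- and $y$-derivatives on $[0,T_*]\times\R^m$. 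Now set $G_i(t,y):=h(v_{i-1}(t,y),y)-c_{i-1}$; by the above and boundedness of the derivatives of $h$ this is $C^r$ and bounded with bounded derivatives on $[0,T_*]\times\R^m$. By $a_2)$--$a_3)$, for each $y$ the number $t_i(y)$ is the value near which $G_i(\cdot,y)$ passes from negative to positive, and $\partial_tG_i(t_i(y),y)=h_x(w_i^+(y),y)\,f_{i-1}(w_i^+(y),y)>2\eta$ \emph{uniformly} in $y$. Hence the implicit function theorem applies at every point $(t_i(y),y)$ and, the crossing time being locally uniquely determined by this sign change, identifies $t_i(\cdot)$ with a $C^r$ function with $\partial_yt_i=-(\partial_tG_i)^{-1}\partial_yG_i$; differentiating repeatedly, each derivative of $t_i$ up to order $r$ is a polynomial in the (bounded) derivatives of $G_i$ and the lower-order derivatives of $t_i$, divided by a power of $\partial_tG_i>2\eta$, so by a subsidiary induction on the order of differentiation $t_i$ is $C^r$ and bounded with bounded derivatives. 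Finally $w_i^+(y)=v_{i-1}(t_i(y),y)$ is a composition of $C^r$ maps that are bounded with bounded derivatives, hence so is $w_i^+$, which closes the induction; the case of $w_j^-(y)$ is identical, using $a'_1)$--$a'_3)$ and Step 1 on $t\le 0$.

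\textbf{The main obstacle.} I expect the only genuinely non-routine point to be Step 1. A priori nothing in the notion of a piecewise smooth solution prevents the crossing times $t_i(y)$ from drifting to $+\infty$ as $|y|\to\infty$; if that happened, the derivatives of the flows $\Phi_{i-1}^s$ used in Step 2 would only be controlled by $e^{O(|s|)}$ and the \emph{uniform in $y$} bounds would collapse. It is precisely the combination in $A_2)$ of the gaps $h(w_+(y),y)-c_{N-1}>\mu_0$, $c_{-M}-h(w_-(y),y)>\mu_0$ with the uniform asymptotics $u_\pm(t,y)-w_\pm(y)\to 0$ that confines the crossing times to a fixed compact interval; once this is in hand the remainder is routine smooth-dependence and implicit-function bookkeeping, with the uniform transversality constant $2\eta$ of $a_2)$/$a'_2)$ keeping every implicit-function formula bounded.
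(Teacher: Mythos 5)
Your proof is correct and follows essentially the same route as the paper: induction over the crossing indices, boundedness of $w_i^\pm(y)$ obtained by flowing $f_{i-1}$ (resp. $f_{j+1}$) over the preceding piece, and the uniform transversality constant $2\eta$ of $a_2)$, $a'_2)$ used to control $t_i(y)$, $t_j(y)$ and their derivatives when differentiating the crossing condition $h=c_\ell$. The only substantive difference is your Step 1: the paper's proof simply takes the boundedness (and smoothness) of $t_1(y)$ for granted and differentiates $h(w_1^+(y),y)=c_0$ rather formally, whereas you derive the uniform bound on all crossing times from the gaps in $A_2)$ together with the uniform asymptotics, and then get the $C^r$ bounds on $t_i(y)$ from the implicit function theorem --- a point the paper relies on later (e.g. when choosing $T_+>\sup_{y}t_N(y)$) but never establishes, so your version is in fact the more complete one.
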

\begin{proof} We know that $w_0^\pm(y)$ is $C^r$ and bounded together with its derivatives. Now,
\[
	w_1^+(y) = u_+(t_1(y),y) = w_0^+(y) + \int_0^{t_1(y)} f_1(u_+(t,y),y)dt.
\]
Hence $w_1^+(y)$ is $C^r$ and  bounded since so are $w_0^+(y)$, $t_1(y)$ and $f_1(x,y)$. Next, from
\[
	h_x(w_1^+(y),y)f_1(w_1^+(y),y)>2\eta
\]
and the fact that $f_1(x,y)$ is bounded, we see that $\bar h>0$ exists such that $\inf_{y\in\R^m}|h_x(w_1^+(y),y)| > \bar h$. Then, differentiating
$h(w_1^+(y),y) = c_0$ we see that
\[
	\frac{dw_1^{+}}{dy}(y) = -\frac{h_y(w_1^+(y),y)}{h_x(w_1^+(y),y)}
\]
is bounded. More arguments of similar nature show that all derivatives of $w_1^+(y)$ are bounded.

Suppose now that $w^+_i(y)$ is bounded with its derivative. Then, since
\[
	w^+_{i+1}(y) = w^+_i(y) + \int_{t_i(y)}^{t_{i+1}(y)} f_i(u_+(t,y),y)dt
\]
we see that $w^+_{i+1}(y)$ is a bounded function and a similar argument as before shows that the derivatives of $w^+_{i+1}(y)$ are also bounded.
The proof that $w^-_j(y)$ is a $C^r$ function bounded with its derivatives for any $j=-1,\ldots, -M$ is similar.
\end{proof}
Let $i=0,1,\ldots,N$. For $t\ge 0$, let $u_i^+(t,y)$ be the solution of $\dot x = f_i(x,y)$ such that $u_i^+(t_i(y),y) =w^+_i(y)$. Similarly,  for $j=-M,\ldots,-1,0$ let $u_j^-(t,y)$ be the solution of $\dot x = f_j(x,y)$ such that $u_j^-(t_i(y),y) =w^-_j(y)$. Then
\begin{equation}\label{rel_u+-}\begin{array}{l}
	u(t,y) :=
	\left \{\begin{array}{ll}
		u_+(t,y)	& \hbox{$t\ge 0$}	\\
		u_-(t,y)	& \hbox{for $t\le 0$}
	\end{array}\right .		\\
	\qquad = \left \{\begin{array}{ll}
		u_j^-(t,y)	& \hbox{for $t_{j-1}(y)\le t\le t_j(y)$, $j=0,-1,\ldots -M$}	\\
		u_i^+(t,y)	& \hbox{for $t_i(y)\le t\le t_{i+1}(y)$, $i=0,\ldots,N$}
	\end{array}\right .
\end{array}
\end{equation}
where, for simplicity, we set $t_{N+1}(y)=\infty$ and $t_{-M-1}(y)=-\infty$
\[
	u_i^+(t_{i+1}(y),y) = u_+(t_{i+1}(y),y) = w^+_{i+1}(y) = u_{i+1}^+(t_i(y),y)
\]
and similarly,
\[
	u_j^-(t_{j-1}(y),y) = w^-_{j-1}(y) = u^-_{j-1}(t_{j-1}(y),y).
\]

Here we emphasize a convention that we will use throughout the whole paper. When we use the index $i$, such as in $t_i$ or $w^+_i(y)$, we always mean $i=1,\ldots,N$, (sometimes also $i=0$) while, when we use the index $j$, as in $t_j$ or $w^-_j(y)$, we always mean $j=-1,\ldots,-M$ (sometimes also $j=0$).

\section{Exponential dichotomy for piecewise discontinuous systems}
A basic tool in this paper is the notion of exponential dichotomy, whose definition we recall here. Let $J$ be either $[a,\infty), (-\infty,a]$, or
$\R$ and $A(t)$, $t\in J$, be a $n\times n$ continuous matrix. We say that the linear system
\begin{equation}\label{linsys}
\dot x = A(t) x, \quad x\in\R^n
\end{equation}
has an exponential dichotomy on $J$ if there exist a projection $P:\R^n\to \R^n$ and constants $\de>0$ and $K\ge 1$ such that the fundamental matrix
$X(t)$ of \eqref{linsys} satisfying $X(a)=\I$, when  $J=[a,\infty), (-\infty,a]$, or $X(0)=\I$ when $J=\R$, satisfies
\[\begin{array}{l}
	|X(t)PX(s)^{-1}|\le Ke^{-\de(t-s)}, 	\quad \hbox{for $s\le t,\; s,t\in J$}		\\
	|X(s)(\I-P)X(t)^{-1}|\le Ke^{-\de(t-s)}, 	\quad \hbox{for $s\le t,\; s,t\in J$.}	
\end{array}\]

In this section we extend the definition of exponential dichotomy to systems with discontinuities. To allow more generality we consent the solutions to have jump discontinuities at the discontinuity points of the coefficient matrix.

So, let $t_0<t_1<\ldots<t_N$ be real numbers, $B_1,\ldots,B_N$ be invertible $n\times n$ matrices and ${\cal A}(t)$, $t\ge t_0$ be a piecewise continuous matrix with possible discontinuity jumps at $t=t_1,\ldots,t_N$, that is
\begin{equation}\label{Ascr}
{\cal A}(t) = \left \{\begin{array}{ll}
	A_{i-1}(t) & \hbox{if $t_{i-1}\le t<t_i$,}	\\
	 A_N(t) & \hbox{if $t\ge t_N$}
\end{array}\right .\end{equation}
where $A_0(t),\ldots, A_N(t)$ are continuous matrices. Note that ${\cal A}(t)$ is continuous for $t\ge t_0$, $t\ne t_i$, and right-continuous at $t=t_i$, with possible jumps at $t=t_i$ given by the matrix  $A_i(t_i)-A_{i-1}(t_i)$. For $t\ge t_0$ we consider the linear, discontinuous, system
\begin{equation}\label{possys}\begin{array}{l}
	\dot x = {\cal A}(t) x 	\\
	x(t_i^+) = B_ix(t_i^-),
\end{array}\end{equation}
Similarly, if $t_{-M}<\ldots t_{-1}<t_0$ and
\begin{equation}\label{Ascr-}
{\cal A}(t) = \left \{\begin{array}{ll}
	A_{-M}(t) & \hbox{if $t\le t_{-M}$}	\\
	A_{j+1}(t) & \hbox{if $t_j<t\le t_{j+1}$,}
\end{array}\right .\end{equation}
where $A_0(t), A_{-1}(t),\ldots A_{-M}(t)$ are continuous matrices, we consider, for $t\le t_0$, the linear, discontinuous, system
\begin{equation}\label{negsys}\begin{array}{l}
	\dot x = {\cal A}(t) x 	\\
	x(t_j^+) = B_jx(t_j^-)
\end{array}\end{equation}
Note that ${\cal A}(t)$ is continuous for $t\le t_0$, $t\ne t_j$, left-continuous at $t=t_j$, with possible jumps at $t=t_j$, given by the matrix $A_{j+1}(t_j)-A_j(t_j)$.

\begin{remark}\label{onA}
i) As a matter of facts, for $t\ge t_0$, we will consider
\[
{\cal A}(t) = \left \{\begin{array}{ll}
	A_{i-1}(t) & \hbox{if $t_{i-1}\le t\le t_i$,}	\\
%	& \; i=1,\ldots,N		\\
	A_N(t) & \hbox{if $t\ge t_N$}
\end{array}\right .
\]
and similarly for $t\le t_0$. This may cause a duplicate definition of ${\cal A}(t)$ at $t=t_i$, however it will be always clear which among $A_i(t)$ will be taken into account at that point.

ii) The results of this section will be applied to the linear system $\dot x = A(t,y)x$ where $A(t,y)$ is given by
\begin{equation}\label{defA}
	A(t,y) := \left\{\begin{array}{ll}
		f_{-M,x}(u(t,y),y) & \hbox{if $t\le t_{-M}(y)$.}	\\
		f_{j+1,x}(u(t,y),y) & \hbox{if $t_j(y)<t\le t_{j+1}(y)$}		\\
%			& \hbox{$j=-1,\ldots,-M$}		\\
		f_{i-1,x}(u(t,y),y) & \hbox{if $t_{i-1}(y)\le t< t_i(y)$}		\\
%			&  \hbox{$i=1,\ldots,N$}		\\	
		f_{N,x}(u(t,y),y) & \hbox{if $t\ge t_N(y)$.}
	\end{array}\right .
\end{equation}
Note that, being $u(t,y)$ continuous for $t\ne 0$, $A(t,y)$ is continuous for $t\ne 0$, $t\ne t_i(y),t_j(y)$ with jump discontinuities at $t=0,t_i(y),t_j(y)$. More precisely
\[\begin{array}{l}
	A(0^-,y) = f_{0,x}(w_0^-(y),y)	\\
	A(0^+,y) = f_{0,x}(w_0^+(y),y),y)	\\
	A(t_i(y)^-,y) = f_{i-1,x}(w_i^+(y)),y) 		\\
	A(t_i(y)^+,y) = f_{i,x}(w_i^+(y)),y) 	\\
	A(t_j(y)^+,y) = f_{j+1,x}(w^-_i(y),y)		\\
	A(t_j(y)^-,y) = f_{j,x}(w^-_i(y),y).	
\end{array}\]
\end{remark}
Without loss of generality we may assume that $t_0=0$, so in the remaining part of this section we will take $t_0=0$.
\medskip

Let $U_i(t)$ be the fundamental matrix of the linear systems
\[
\dot x =A_i(t)x
\]
on $\R$, that is $\dot U_i(t) = A_i(t)U_i(t)$, $t\in\R$, and $U_i(0)=\I$. The fundamental matrix of \eqref{possys}, where $t\ge 0$, is given by the (discontinuous) invertible matrix
\begin{equation}\label{Xplus}
X_+(t) = \left \{\begin{array}{ll}
	U_0(t)	& \hbox{if $0\le t< t_1$}	\\
	U_i(t)U_i(t_i)^{-1}B_iX_+(t_i^-)	& \hbox{if $t_i\le t< t_{i+1}$}	\\
	U_N(t)U_N(t_N)^{-1}B_NX_+(t_N^-) & \hbox{if $t\ge t_N$}
\end{array}\right .
\end{equation}
that is
\[
	X_+(t) = \left \{ \begin{array}{ll}
	U_0(t)	& \hbox{for $0\le t<t_1$}		\\
	U_1(t)U_1(t_1)^{-1}B_1U_0(t_1) & \hbox{for $t_1\le t<t_2$}		\\
	U_i(t)U_i(t_i)^{-1}B_iU_{i-1}(t_i)U_{i-1}(t_{i-1})^{-1}\ldots B_1U_0(t_1)
	& \hbox{for $t_i\le t<t_{i+1}$}	\\
	\vdots	\\
	U_N(t) U_N(t_N)^{-1} B_NU_{N-1}(t_N)U_{N-1}(t_{N-1})^{-1}\ldots B_1U_0(t_1)	& \hbox{for $t\ge t_N$.}
\end{array}\right .
\]
Similarly, the fundamental matrix of \eqref{possys}, where $t\le 0$, is given by the (discontinuous) invertible matrix
\begin{equation}\label{Xmin}
X_-(t) = \left \{\begin{array}{ll}
	U_0(t) & \hbox{if $t_{-1}<t\le 0$}		\\
	U_j(t)U_j(t_j)^{-1}B_j^{-1}X_-(t_j^+) & \hbox{if $t_{j-1}<t\le t_j$}	\\
	U_{-M}(t)U_{-M}(t_{-M})^{-1}B_{-M}^{-1}X_-(t_{-M}^+) & \hbox{if $t\le t_{-M}.$}
\end{array}\right .
\end{equation}
Note that $X_+(t)$ is continuous for $t\ne t_1,\ldots, t_N$ and right-continuous at $t= t_1,\ldots, t_N$ and $X_-(t)$ is continuous for $t\ne t_{-1},\ldots, t_{-M}$ and left-continuous at $t\ne t_{-1},\ldots, t_{-M}$.

\medskip

It is clear that $\dot X_+(t) = {\cal A}(t)X_+(t)$, for $t\ge 0$, $t\ne t_i$, $\dot X_-(t) = {\cal A}(t)X_-(t)$, for $t\le 0$, $t\ne t_j$, $X_\pm(0)=\I$, the identity matrix, and
\[\begin{array}{l}
	X_+(t_i^+)=B_iX_+(t_i^-)		\\
	X_-(t_j^+)=B_jX_-(t_j^-).
\end{array}\]
Actually we can write
\[
	X_+(t_i)=B_iX_+(t_i^-), \quad X_-(t_j)=B_j^{-1}X_-(t_j^+)
\]
since $X_+(t)$ is right-continuous and $X_-(t)$ is left-continuous. Then, from \eqref{Xplus} and \eqref{Xmin} we see that
\begin{equation}\label{XU+}
	X_+(t)X_+(t_N^+) = U_N(t) U_N(t_N)^{-1}, \forall t\ge t_N
\end{equation}
and, similarly,
\begin{equation}\label{XU-}
	X_-(t)X_-(t_N^-) = U_N(t) U_N(t_N)^{-1}, \forall t\le t_{-M}.
\end{equation}

\begin{remark}\label{rem3.3}
i) Let $\tau\ge 0$ be a fixed number. For $t\ge 0$, $x(t) = X_+(t)X_+(\tau)^{-1}\tilde x$ is the right-continuous solution of
\begin{equation}\label{discA}
\left \{\begin{array}{l}
	\dot x = {\cal A}(t)x,\quad \hbox{for $t\ge 0$, $t\ne t_1,\ldots,t_N$}	\\
	x(t_i^+)=B_ix(t_i^-)		\\
	x(\tau^+) = \tilde x.
\end{array}\right .
\end{equation}
Indeed, it is obvious that $\dot x(t) = {\cal A}(t)x(t)$ for $t\ge 0$, $t\ne t_1,\ldots,t_N$ and that $x(t_i^+) = B_ix(t_i^-)$, since $X_+(t_i^+)=B_iX_+(t_i^-)$. Moreover $x(\tau^+) = X_+(\tau^+)X_+(\tau)^{-1}\tilde x = X_+(\tau)X_+(\tau)^{-1}\tilde x =\tilde x$, since $X_+(t)$ is right-continuous at any $t\ge 0$.

Similarly, for $t\le 0$ and any fixed $\tau\le 0$, $x(t) = X_-(t)X_-(\tau)^{-1}\tilde x$ is the left-continuous solution of
\begin{equation}\label{discB}
\left \{\begin{array}{l}
	\dot x = {\cal A}(t)x,\quad \hbox{for $t\le 0$, $t\ne t_{-1},\ldots,t_{-N}$}	\\
	x(t_{-i}^-)=B_i^{-1}x(t_{-i}^+)		\\
	x(\tau^-) = \tilde x.
\end{array}\right .
\end{equation}

ii) As $X_+(t)X_+(\tau)^{-1}\tilde x$ is right-continuous at $t=t_1,\ldots, t_N$ it is also clear that $X_+(t)X_+(\tau)^{-1}\tilde x$ satisfies
\begin{equation}\label{altdiscA}
\left \{\begin{array}{l}
	\dot x = {\cal A}(t)x,\quad \hbox{for $t\ge 0$, $t\ne t_1,\ldots,t_N$}	\\
	x(t_i)=B_ix(t_i^-)		\\
	x(\tau) = \tilde x
\end{array}\right .
\end{equation}
and, similarly, $X_-(t)X_-(\tau)^{-1}\tilde x$ satisfies
\begin{equation}\label{altdiscB}
\left \{\begin{array}{l}
	\dot x = {\cal A}(t)x,\quad \hbox{for $t\le 0$, $t\ne t_1,\ldots,t_N$}	\\
	x(t_{-i})=B_i^{-1}x(t_{-i}^+)		\\
	x(\tau) = \tilde x.
\end{array}\right .
\end{equation}
\end{remark}
We have the following
\begin{lemma}\label{extend} Suppose that the linear system
\[
	\dot x = A_N(t)x \quad  [resp. \dot x = A_{-M}(t)x]
\]
has an exponential dichotomy on $t\ge t_N$ (resp. $t\le t_{-M}$) with constant $K$, exponent $\delta$ and projection ${\cal P}_+$ (resp. ${\cal P}_-$). Then, the linear system \eqref{possys} (resp. \eqref{negsys}) with ${\cal A}(t)$ as in \eqref{Ascr} (resp. \eqref{Ascr-}) has an exponential dichotomy on $\R_+$, (resp. $\R_-$) with the same exponent $\delta$, constant $\tilde K\ge K$ and projection
\begin{equation}\label{Rtilde}\begin{array}{c}
	\tilde {\cal P}_+= X_+(t_N)^{-1}{\cal P}_+X_+(t_N)	\\
	(\hbox{resp. }\tilde {\cal P}_- = X_-(t_{-M})^{-1}{\cal P}_-X_-(t_{-M})).
\end{array}\end{equation}
\end{lemma}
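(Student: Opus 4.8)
The plan is to prove the statement for $t\ge 0$ (the case $t\le 0$ being entirely symmetric, replacing $X_+$ with $X_-$, $t_N$ with $t_{-M}$, and the $B_i$ with $B_j^{-1}$). The key observation is formula \eqref{XU+}, which says that on $[t_N,\infty)$ the fundamental matrix $X_+(t)$ of \eqref{possys} differs from the fundamental matrix $U_N(t)$ of $\dot x = A_N(t)x$ only by the constant invertible factor $X_+(t_N^+)$; precisely $X_+(t) = U_N(t)U_N(t_N)^{-1}X_+(t_N^+)$ for $t\ge t_N$. So on the tail $[t_N,\infty)$ the dichotomy of $\dot x = A_N(t)x$ transfers directly: if $U_N(t){\cal P}_+U_N(s)^{-1}$ decays, then so does $X_+(t)\bigl(X_+(t_N^+)^{-1}{\cal P}_+X_+(t_N^+)\bigr)X_+(s)^{-1}$ for $t_N\le s\le t$, with the same exponent $\delta$ and the same constant $K$ (the conjugating factors cancel). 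This identifies the natural candidate projection on the tail as $X_+(t_N^+)^{-1}{\cal P}_+X_+(t_N^+)$; since by Remark \ref{onA}(i)/the right-continuity convention we may read $X_+(t_N)$ as $X_+(t_N^+)$ here, this is exactly $\tilde{\cal P}_+$ in \eqref{Rtilde}.

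Next I would extend the estimates from the tail $[t_N,\infty)$ down to all of $\R_+$. The point is that on the compact interval $[0,t_N]$ the matrix ${\cal A}(t)$ is piecewise continuous with finitely many jumps, and the $B_i$ are invertible, so $X_+(t)$ and $X_+(t)^{-1}$ are bounded on $[0,t_N]$: there is a constant $C\ge 1$ with $|X_+(t)X_+(s)^{-1}|\le C$ for all $s,t\in[0,t_N]$ (this follows from the explicit product form of $X_+$ in \eqref{Xplus}, using Gronwall on each subinterval and the fixed bounds on the $B_i$). Using the semigroup-type identity $X_+(t)\tilde{\cal P}_+X_+(s)^{-1} = \bigl(X_+(t)\tilde{\cal P}_+X_+(t_N)^{-1}\bigr)\bigl(X_+(t_N)X_+(s)^{-1}\bigr)$ and similar factorizations — splitting any pair $s\le t$ according to whether each lies in $[0,t_N]$ or $[t_N,\infty)$ — one bounds $|X_+(t)\tilde{\cal P}_+X_+(s)^{-1}|$ and $|X_+(s)(\I-\tilde{\cal P}_+)X_+(t)^{-1}|$ by $Ke^{-\delta(t-s)}$ times a fixed power of $C$ and a factor $e^{\delta t_N}$, i.e. by $\tilde K e^{-\delta(t-s)}$ for a suitable $\tilde K\ge K$ depending only on $K$, $\delta$, $t_N$, $C$. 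I should also note that $\tilde{\cal P}_+$ is a genuine projection since conjugation of the projection ${\cal P}_+$ by the invertible matrix $X_+(t_N)$ preserves ${\cal P}_+^2={\cal P}_+$.

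The main obstacle is purely bookkeeping: handling the finitely many breakpoints $t_1,\dots,t_N$ and the jump matrices $B_i$ correctly, so that the constants track through cleanly and so that the right-continuity conventions of Remark \ref{rem3.3} are respected (in particular reconciling $X_+(t_N)$ versus $X_+(t_N^+)$ and making sure the definition of $\tilde{\cal P}_+$ in \eqref{Rtilde} is the one that actually makes the tail estimate work). There is no analytic difficulty beyond Gronwall on a compact interval; the substance is entirely in \eqref{XU+}. Finally, for $t\le 0$ the identical argument applies with \eqref{XU-} in place of \eqref{XU+}, yielding the projection $\tilde{\cal P}_- = X_-(t_{-M})^{-1}{\cal P}_-X_-(t_{-M})$, which completes the proof.
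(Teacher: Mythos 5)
Your proposal is correct and follows essentially the same route as the paper: transfer the dichotomy to the tail $[t_N,\infty)$ via \eqref{XU+} (the conjugating factor $X_+(t_N)$ cancels, which is exactly what singles out $\tilde{\cal P}_+$ in \eqref{Rtilde}), bound everything on the compact interval $[0,t_N]$ by a uniform constant, and handle the mixed case $0\le s\le t_N\le t$ by factoring through $t_N$, enlarging $K$ to $\tilde K$ while keeping the exponent $\delta$. The only cosmetic difference is that the paper obtains the compact-interval bound directly from piecewise continuity of $X_+(t)\tilde{\cal P}_+X_+(s)^{-1}$ rather than from a Gronwall estimate, which is an equivalent bookkeeping choice.
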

\begin{proof}
As $X_+(t)$ is right continuous at $t=t_N$, for $t_N\le s\le t$ we have
\[\begin{array}{l}
	|X_+(t)\tilde {\cal P}_+X_+(s)^{-1}| = 	\\
	|U_N(t)U_N(t_N)^{-1}B_NX_+(t_N^-)X_+(t_N^+)^{-1}{\cal P}_+X_+(t_N^+)X_+(t_N^-)^{-1}B_N^{-1}U_N(t_N)U_N(s)^{-1}|	\\
	= |U_N(t)U_N(t_N)^{-1}{\cal P}_+U_N(t_N)U_N(s)^{-1}| \le Ke^{-\delta(t-s)}
\end{array}\]
since $U_N(t)U_N(t_N)^{-1}$ is the fundamental matrix of $\dot x = A_N(t)x$ on $t\ge t_N$. Similarly we see that
\[
	|X_+(s)(\I-\tilde {\cal P}_+)X_+(t)^{-1}|\le Ke^{-\delta(t-s)}
\]
for $t_N\le s\le t$. Being $0\le s\le t\le t_N$ compact and $X_+(t)\tilde {\cal P}_+X_+(s)^{-1}$ piecewise continuous with right and left limits at the discontinuity points, there exists $\hat K\ge K$ such that
\[\begin{array}{l}
	|X_+(t)\tilde {\cal P}_+X_+(s)^{-1}|\le \hat Ke^{-\delta(t-s)}		\\
	|X_+(s)(\I-\tilde {\cal P}_+)X_+(t)^{-1}|\le \hat Ke^{-\delta(t-s)}	
\end{array}\]
for $0\le s\le t\le t_N$. Finally, for $0\le s \le t_N< t$, we have, using right-continuity of $X_+(t)$:
\[\begin{array}{l}
	|X_+(t)\tilde {\cal P}_+X_+(s)^{-1}| \le |X_+(t)\tilde {\cal P}_+X_+(t_N)^{-1}| |B_NX_+(t_N^-)\tilde {\cal P}_+X_+(s)^{-1}|	\\
	\qquad \le Ke^{-\de(t-t_N)}|B_N| \hat K e^{-\de(t_N-s)} \le K|B_N|\hat K e^{-\de(t-s)}		\\
	|X_+(s)(\I-\tilde {\cal P}_+)X_+(t)^{-1}| 	\\
	\qquad \le |X_+(s)(\I-\tilde {\cal P}_+)X_+(t_N^-)^{-1}| |B_N^{-1}X_+(t_N^+)(\I-\tilde {\cal P}_+)X_+(t)^{-1}|	\\
	\qquad \le Ke^{-\de(s-t_N)}|B_N^{-1}| \hat K e^{-\de(t_N-t)} \le K|B_N^{-1}|\hat K e^{-\de(t-s)}	
\end{array}\]
proving the result for $t\ge 0$ with $\tilde K = \max\{K,\hat K, K|B_N^{-1}|\hat K\}$. A similar argument works when $t\le 0$. The Lemma is proved.
\end{proof}

The following result characterises ${\cal R}\tilde P_+$, resp. ${\cal N}\tilde P_-$, in terms of bounded solutions of system \eqref{possys}, resp. \eqref{negsys}, extending to the discontinuous case a simular result for continuous equations.
\begin{lemma}\label{bddlem} Let ${\cal A}(t)$ be either as in \eqref{Ascr} or \eqref{Ascr-}. Suppose that the condition of Lemma \ref{extend} hold and let $\tilde {\cal P}_\pm$ be as in \eqref{Rtilde}. Then $\xi_+\in {\cal R}\tilde {\cal P}_+$ if and only if the solution of the discontinuous system \eqref{possys} such that $x(0)=\xi_+$ is bounded for $t\ge 0$. Similarly, $\xi_-\in {\cal N}\tilde {\cal P}_-$ if and only if the solution of the discontinuous system \eqref{possys} such that
$x(0)=\xi_-$ is bounded for $t\le 0$.
\end{lemma}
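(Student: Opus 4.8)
The plan is to deduce the statement from the classical dichotomy characterization of bounded solutions, transported to the piecewise setting via the discontinuous fundamental matrices $X_\pm(t)$. First I would record the ingredients: by Lemma \ref{extend}, system \eqref{possys} with ${\cal A}(t)$ as in \eqref{Ascr} has an exponential dichotomy on $\R_+$ with projection $\tilde{\cal P}_+$, exponent $\delta$ and constant $\tilde K$, relative to $X_+(t)$ (normalised by $X_+(0)=\I$); and by Remark \ref{rem3.3}, applied with $\tau=0$, the right-continuous solution of \eqref{possys} with $x(0)=\xi_+$ is exactly $x(t)=X_+(t)\xi_+$. Since this solution is continuous apart from the finitely many jumps at $t_1,\dots,t_N$, boundedness on $\R_+$ is equivalent to $\sup_{t\ge0}|X_+(t)\xi_+|<\infty$, so it suffices to work with the norm of $X_+(t)\xi_+$.

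For the ``if'' direction I would argue: if $\xi_+\in{\cal R}\tilde{\cal P}_+$ then $\tilde{\cal P}_+\xi_+=\xi_+$, hence by the first dichotomy estimate of Lemma \ref{extend} taken at $s=0$ (where $X_+(0)^{-1}=\I$),
\[
|X_+(t)\xi_+|=|X_+(t)\tilde{\cal P}_+X_+(0)^{-1}\xi_+|\le\tilde K e^{-\delta t}|\xi_+|,
\]
which tends to $0$, so the solution is (even exponentially) bounded. For the ``only if'' direction I would set $\eta:=(\I-\tilde{\cal P}_+)\xi_+$, note $\tilde{\cal P}_+\eta=0$, and observe that $X_+(t)\eta=X_+(t)\xi_+-X_+(t)\tilde{\cal P}_+\xi_+$ is bounded (the first term by hypothesis, the second by the estimate just used). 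Then, for $0\le s\le t$, using $(\I-\tilde{\cal P}_+)X_+(t)^{-1}X_+(t)\eta=\eta$ and the second dichotomy estimate,
\[
|X_+(s)\eta|=\bigl|X_+(s)(\I-\tilde{\cal P}_+)X_+(t)^{-1}\,X_+(t)\eta\bigr|\le\tilde K e^{-\delta(t-s)}\sup_{\tau\ge0}|X_+(\tau)\eta|,
\]
and letting $t\to\infty$ with $s$ fixed gives $X_+(s)\eta=0$ for all $s\ge0$; at $s=0$ this yields $\eta=0$, i.e. $\xi_+=\tilde{\cal P}_+\xi_+\in{\cal R}\tilde{\cal P}_+$.

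The case $t\le0$ I would handle symmetrically, using Lemma \ref{extend} for \eqref{negsys} with ${\cal A}(t)$ as in \eqref{Ascr-}, the dichotomy on $\R_-$ with projection $\tilde{\cal P}_-$, and the left-continuous fundamental matrix $X_-(t)$ (with $X_-(0)=\I$), and Remark \ref{rem3.3} to identify the solution with $x(t)=X_-(t)\xi_-$. If $\xi_-\in{\cal N}\tilde{\cal P}_-$ then $\xi_-=(\I-\tilde{\cal P}_-)\xi_-$ and the second dichotomy estimate at $t=0$ gives $|X_-(t)\xi_-|\le\tilde K e^{\delta t}|\xi_-|$, which tends to $0$ as $t\to-\infty$. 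Conversely, if $X_-(t)\xi_-$ is bounded on $\R_-$, set $\zeta:=\tilde{\cal P}_-\xi_-$; then $X_-(t)\zeta$ is also bounded, and the first dichotomy estimate at $t=0$ gives, for $s\le0$,
\[
|\zeta|=\bigl|X_-(0)\tilde{\cal P}_-X_-(s)^{-1}\,X_-(s)\zeta\bigr|\le\tilde K e^{\delta s}\sup_{\tau\le0}|X_-(\tau)\zeta|,
\]
which tends to $0$ as $s\to-\infty$, so $\zeta=0$ and $\xi_-=(\I-\tilde{\cal P}_-)\xi_-\in{\cal N}\tilde{\cal P}_-$.

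I do not expect a genuine obstacle; the argument is the textbook one for continuous dichotomic systems. The only points demanding care are bookkeeping: verifying via Remark \ref{rem3.3} that $X_\pm(t)\xi_\pm$ is precisely the solution meant in the statement, and that ``bounded on $\R_\pm$'' for a piecewise solution with finitely many jumps amounts to finiteness of $\sup|X_\pm(t)\xi_\pm|$; and invoking the dichotomy bounds of Lemma \ref{extend} at the endpoint $s=0$ (resp. $t=0$), where $X_\pm(0)=\I$, so that they become bounds directly on $X_\pm(t)\xi_\pm$. The invertible jump matrices $B_i$ cause no trouble, being already built into $X_\pm(t)$ and irrelevant to the exponential rates.
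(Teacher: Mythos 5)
Your proposal is correct and follows essentially the same route as the paper: the forward implication is the first dichotomy estimate at $s=0$, and the converse uses the second dichotomy estimate at $s=0$ together with boundedness of the solution to force the $(\I-\tilde{\cal P}_+)$ (resp.\ $\tilde{\cal P}_-$) component of the initial value to vanish as $t\to\pm\infty$. Your detour through $\eta=(\I-\tilde{\cal P}_+)\xi_+$ and $X_+(s)\eta$ is just a slightly more elaborate phrasing of the paper's one-line estimate $|(\I-\tilde{\cal P}_+)x(0)|\le Ke^{-\de t}\sup_{t\ge0}|x(t)|$, so there is no substantive difference.
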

\begin{proof}
If $\xi_+\in{\cal R}\tilde {\cal P}_+$ we have
\[
	|X_+(t)\xi_+| = |X_+(t)\tilde {\cal P}_+\xi_+| \le Ke^{-\de t}|\xi_+|
\]
that is the solution of \eqref{possys} starting from $\xi_+$ is bounded. Vice versa, suppose that $x(t)$ is a solution of \eqref{possys} bounded on $t\ge 0$. We have
\[\begin{array}{l}
	|[\I-\tilde {\cal P}_+]x(0)| = |[\I-\tilde {\cal P}_+]X_+(t)^{-1}x(t)| \le Ke^{-\de t}\sup_{t\ge 0}|x(t)| \to 0
\end{array}\]
as $t\to\infty$. Then $[\I-\tilde {\cal P}_+]x(0)=0$ and hence $x(0)\in{\cal R}\tilde {\cal P}_+(0)$. By a similar argument we prove the thesis when $\xi_-\in {\cal N}\tilde {\cal P}_-$ is concerned.
\end{proof}
\medskip

We conclude this section with the following
\begin{lemma}\label{prelem}
Let $B_i$, $B_j$, be invertile $n\times n$ matrices and $k(t)$ be a bounded integrable function for $t\ge 0$, (resp. $t\le 0$).
Suppose the condition of Lemma \ref{extend} hold and set
\[\begin{array}{l}
	\tilde {\cal P}_+^\tau = X_+(\tau)\tilde {\cal P}_+X_+(\tau)^{-1}		\\
	\tilde {\cal P}_-^\tau = X_-(-\tau)\tilde {\cal P}_-X_-(-\tau)^{-1}
\end{array}\]
where $\tilde {\cal P}_\pm$ is as in \eqref{Rtilde} and $0\le \tau\in\R$ is a fixed number. Then, for any $\xi_+\in{\cal R}\tilde {\cal P}_+^\tau$ (resp. $\xi_-\in{\cal N}\tilde {\cal P}_-^\tau$) the linear inhomogeneous system
\begin{equation}\label{eqplus}\begin{array}{l}
	\dot x = {\cal A}(t)x + k(t) 	\\
	x(t_i^+) = B_ix(t_i^-),		\\	%\quad \hbox{$i=1,\ldots,N$}	\\
	\tilde {\cal P}_+^\tau x(\tau) = \xi_+
\end{array}\end{equation}
with $t\ge 0$, [resp.
\[\begin{array}{l}
	\dot x = {\cal A}(t)x + k(t) 	\\
	x(t_j^-) = B_j^{-1}x(t_j^+)	\\
	(\I-\tilde {\cal P}_-^\tau)x(-\tau) = \xi_-
\end{array}\]
when $t\le 0$] has the unique right-continuous, [resp. left-continuous when $t\le 0$] bounded solution
\begin{equation}\label{xpos}\begin{array}{l}
	\dis x(t) = X_+(t)\tilde {\cal P}_+X_+(\tau)^{-1}\xi_+ + \int_\tau^t X_+(t)\tilde {\cal P}_+X_+(s)^{-1}k(s) ds 		\\
	\dis \qquad - \int_t^\infty X_+(t)(\I-\tilde {\cal P}_+)X_+(s)^{-1}k(s) ds
\end{array}\end{equation}
[resp.
\begin{equation}\label{xneg}\begin{array}{l}
	\dis x(t) = X_-(t)(\I-\tilde {\cal P}_-)X_-(-\tau)^{-1}\xi_- + \int_{-\infty}^t X_-(t)\tilde {\cal P}_-X_-(s)^{-1}k(s) ds 		\\
	\dis \qquad - \int_t^{-\tau} X_-(t)(\I-\tilde {\cal P}_-)X_-(s)^{-1}k(s) ds
\end{array}\end{equation}
if $t\le 0$]. Moreover such a solution satisfies
\begin{equation}\label{estpos}
	\sup_{t\ge\tau} |x(t)| \le K[|\xi_+| + 2\delta^{-1}\sup_{t\ge 0}|k(t)|]
\end{equation}
if $t\ge 0$ [resp.
\begin{equation}\label{estneg}
	\sup_{t\le -\tau}|x(t)| \le K[|\xi_-| + 2\delta^{-1}\sup_{t\le 0}|k(t)|]
\end{equation}
if $t\le 0$].
\end{lemma}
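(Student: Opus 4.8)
The plan is to reduce everything to the classical variation-of-constants formula for an inhomogeneous linear system with an exponential dichotomy, which applies here because Lemma \ref{extend} tells us that the discontinuous system \eqref{possys} (resp. \eqref{negsys}) has an exponential dichotomy on $\R_+$ (resp. $\R_-$) with projection $\tilde{\cal P}_+$ (resp. $\tilde{\cal P}_-$), constant $K$ (I will actually keep the $K$ from Lemma \ref{extend}, which dominates the original one since the Green's function picks up the factor $K$), and exponent $\delta$. First I would observe that, by shifting the base point, the dichotomy estimates read $|X_+(t)\tilde{\cal P}_+X_+(s)^{-1}|\le Ke^{-\delta(t-s)}$ for $\tau\le s\le t$ and $|X_+(s)(\I-\tilde{\cal P}_+)X_+(t)^{-1}|\le Ke^{-\delta(t-s)}$ for $\tau\le s\le t$; the integrals in \eqref{xpos} and \eqref{xneg} then converge absolutely because $k$ is bounded, and the formula defines a function on $[\tau,\infty)$ (resp. $(-\infty,-\tau]$).

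Next I would verify that $x(t)$ in \eqref{xpos} is indeed a solution: away from the jump points $t_i$, differentiating the three terms and using $\dot X_+(t)={\cal A}(t)X_+(t)$ together with $X_+(t)\tilde{\cal P}_+X_+(t)^{-1}+X_+(t)(\I-\tilde{\cal P}_+)X_+(t)^{-1}=\I$ gives $\dot x={\cal A}(t)x+k(t)$ by the standard Green's-function computation. At $t=t_i$ the jump condition $x(t_i^+)=B_ix(t_i^-)$ is inherited from $X_+(t_i^+)=B_iX_+(t_i^-)$ (Remark \ref{rem3.3}), the integral terms being continuous there; right-continuity of $x$ at each $t_i$ follows from right-continuity of $X_+$. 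Finally, evaluating \eqref{xpos} at $t=\tau$ kills the second integral and leaves $x(\tau)=X_+(\tau)\tilde{\cal P}_+X_+(\tau)^{-1}\xi_+-\int_\tau^\infty X_+(\tau)(\I-\tilde{\cal P}_+)X_+(s)^{-1}k(s)\,ds$; applying $\tilde{\cal P}_+^\tau=X_+(\tau)\tilde{\cal P}_+X_+(\tau)^{-1}$ and using $\tilde{\cal P}_+(\I-\tilde{\cal P}_+)=0$ shows the second term is annihilated, while the first equals $\xi_+$ because $\xi_+\in{\cal R}\tilde{\cal P}_+^\tau$ means $\tilde{\cal P}_+^\tau\xi_+=\xi_+$; hence $\tilde{\cal P}_+^\tau x(\tau)=\xi_+$.

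For the bound \eqref{estpos}, for $t\ge\tau$ I would estimate the three terms of \eqref{xpos} separately: $|X_+(t)\tilde{\cal P}_+X_+(\tau)^{-1}\xi_+|\le Ke^{-\delta(t-\tau)}|\xi_+|\le K|\xi_+|$; the first integral is at most $\int_\tau^t Ke^{-\delta(t-s)}\,ds\,\sup|k|\le K\delta^{-1}\sup_{t\ge0}|k(t)|$; the second integral is at most $\int_t^\infty Ke^{-\delta(s-t)}\,ds\,\sup|k|\le K\delta^{-1}\sup_{t\ge0}|k(t)|$; adding gives $|x(t)|\le K[|\xi_+|+2\delta^{-1}\sup_{t\ge0}|k(t)|]$, uniformly in $t\ge\tau$. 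Uniqueness follows because the difference of two bounded solutions of the homogeneous problem with $\tilde{\cal P}_+^\tau$-projection zero at $t=\tau$ lies in ${\cal N}\tilde{\cal P}_+^\tau$, hence grows like $Ke^{\delta(t-\tau)}$-controlled only through $(\I-\tilde{\cal P}_+)$, and boundedness forces it to vanish by the argument of Lemma \ref{bddlem}. The case $t\le0$ is entirely symmetric, using $X_-$, left-continuity, and the dichotomy on $\R_-$. The only mildly delicate point — and the one I would be most careful about — is the bookkeeping at the jumps: making sure the Green's function built from the \emph{discontinuous} $X_\pm$ still satisfies the correct transition relations and that right/left continuity is preserved through the integral terms; once Lemma \ref{extend} and Remark \ref{rem3.3} are in hand this is routine, so there is no serious obstacle.
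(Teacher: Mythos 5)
Your proposal is correct and follows essentially the same route as the paper: existence by verifying that the explicit Green's-function formula \eqref{xpos} is a right-continuous solution (the jump condition being inherited from $X_+(t_i^+)=B_iX_+(t_i^-)$ and the projection condition from $\tilde{\cal P}_+(\I-\tilde{\cal P}_+)=0$), uniqueness by noting that the difference of two bounded solutions satisfies $x(t)=X_+(t)X_+(\tau)^{-1}x(\tau)$ with $\tilde{\cal P}_+^\tau x(\tau)=0$ so the dichotomy estimate on the complementary projection forces $x(\tau)=0$ (exactly the argument of Lemma \ref{bddlem}, which the paper writes out explicitly), and the bound \eqref{estpos} by termwise estimation; your write-up is in fact somewhat more detailed than the paper's on the verification step.
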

\begin{proof} We only give the proof for $t\ge 0$, the proof for $t\le 0$ being similar. We prove uniqueness, first. Suppose that $x_1(t), x_2(t)$ are two  solutions of \eqref{eqplus}, right-continuous and bounded for $t\ge 0$. Then $x(t)=x_1(t)-x_2(t)$ is a right-continuous, bounded solution of
\[\begin{array}{l}
	\dot x = {\cal A}(t)x		\\
	x(t_i^+) = B_ix(t_i^-)	\\
	\tilde {\cal P}_+^\tau x(\tau) = 0.
\end{array}\]
Then, as we have observed in Remark \ref{rem3.3}-i), $x(t) = X_+(t)X_+(\tau)^{-1}x(\tau)$, so:
\[\begin{array}{l}
	x(\tau) = (\I-\tilde {\cal P}_+^\tau)x(\tau) = (\I-\tilde {\cal P}_+^\tau)X_+(\tau)X_+(t)^{-1}x(t) 	\\
	\qquad = X_+(\tau)(\I-\tilde {\cal P}_+)X_+(t)^{-1}x(t)
\end{array}\]
from which we get
\[
	|x(\tau)|\le Ke^{-\delta (t-\tau)}\sup_{t\ge \tau}|x(t)| \to 0
\]
as $t\to\infty$, since $x(t)$ is bounded. This proves that $x(\tau)=0$ and then $x(t)=0$. Hence we have uniqueness. To show the existence we observe that the function given in \eqref{xpos} is right-continuous, bounded for $t\ge \tau$ (and hence also for $t\ge 0$) and satisfies \eqref{eqplus}. Finally, from \eqref{xpos} we get:
\[
|x(t)| \le K e^{-\delta(t-\tau)}|\xi_+| + \int_0^\infty Ke^{-\delta |t-s|}ds\, \sup_{t\ge 0}|k(t)|
\]
from which \eqref{estpos} easily follows.
\end{proof}
\section{Bounded solutions on the half lines}
In this section we prove the existence of continuous solutions $(x(t),y(t))$, with $t\ge 0$, of the perturbed linear system \eqref{pert} such that
\[
	\sup_{t\ge 0} |x(t) - u(t,y(t))|<\rho
\]
where $\rho>0$ is a sufficiently small positive real number. By a similar argument we can also prove the existence of continuous solutions of \eqref{pert}, with $t\le 0$, such that
\[
	\sup_{t\le 0} |x(t) - u(t,y(t))|<\rho.
\]

From $A_3)$ it follows that the number of the eigenvalues of $f_{N,x}(w_+(y),y)$, $f_{-M,x}(w_-(y),y)$ with negative (and then also positive) real parts, counted with multiplicities, is independent of $y\in\R^m$. Moreover it also follows that all eigenvalues are bounded functions of $y\in\R^m$. Indeed, since $f_{N,x}(w_+(y),y)$ is bounded, the matrix $\I-\la^{-1}f_{N,x}(w_+(y),y)$ is invertible for $|\la|>R$, sufficiently large and independent of $y$. Hence all eigenvalues have to satisfy $|\la|\le R$. The same arguments work as far as the eigenvalues of $f_{-M,x}(w_-(y),y)$ are concerned.

As in $A_3)$, let $k<n$ be the number of eigenvalues with negative real parts, counted with multiplicities, of the matrix $f_{N,x}(w_+(y),y)$ and $\de_0$ be any positive number strictly less than $\min\{|{Re}\la(y)|\}$, where $\la(y)$ are the eigenvalues of $f_{N,x}(w_+(y),y)$. According to \cite{Cop} the system $\dot x = f_{N,x}(w_+(y),y)x$ has an exponential dichotomy on $\R$ with exponent $\de_0$ and (spectral) projection
\[\begin{array}{l}
	\dis P^0_+(y) = \frac{1}{2\pi i}\int_\Gamma (z\I-f_{N,x}(w_+(y),y))^{-1} dz 	\\
	\qquad \dis = \sum_{Re\la(y)<0} {\rm Res}((z\I-f_{N,x}(w_+(y),y))^{-1}, z=\la(y))
\end{array}\]
where ${\rm Res}(F(z),z=z_0)$ is the residual of the meromorphic function $F(z)$ at $z_0$ and $\Gamma$ is a closed curve that contains in its interior all eigenvalues of $f_{N,x}(0,y)$ with negative real parts, but none of those with positive real parts. Hence $\sup_{y\in\R^m}|P_+^0(y)|\le M$, for some $M\ge 1$.  Similarly we see that  $\dot x = f_{-M,x}(w_-(y),y)x$ has an exponential dichotomy on $\R$ with exponent $\de_0$ and projection $P^0_-(y)$ such that $\sup_{y\in\R^m}|P_+^0(y)|\le M$, for some $M\ge 1$.

Now,
%recalling that
%\[\begin{array}{l}
%	u_i^+(t,y) = w_i(t-t_{i-1}(y),w_{i-1}^+(y),y)		\\
%	u_i^-(t,y) = w_i(t-t_{-i+1}(y),w^-_{i-1}(y),y)
%\end{array}\]
%(see \eqref{u_i+}-\eqref{u_i-}),
from $A_2)$ we know that
\[
	\lim_{t\to\pm\infty} u^\pm_N(t,y) = w_\pm(y)
\]
uniformly with respect to $y\in\R^m$.

Let $T_+>\sup_{y\in\R^m}t_N(y)$, $T_-<\inf_{y\in\R^m}t_{-M}(y)$ and take $0<\de<\de_0$. From the roughness of exponential dichotomies (cfr. \cite[Proposition 2, p. 34]{Cop}) the linear systems
\begin{equation}\label{variat+}
	\dot x = f_{N,x}(u^+_N(t+T_+,y),y)x
\end{equation}
and
\begin{equation}\label{variat-}
	\dot x = f_{-M,x}(u^-_{-M}(t+T_-,y),y)x	
\end{equation}
have an exponential dichotomy on $\R_+$, $\R_-$ resp., uniform with respect to $y\in\R^m$, with projections $P_+(y)$, resp. $P_-(y)$, of rank $k$, constant $K$ and exponent $\de$. Moreover, according to \cite[Proposition 2.3]{P2}, it can be assumed that, for $|y-y_0|$ sufficiently small it results: ${\cal N}P_+(y)={\cal N}P_+(y_0)$, ${\cal R}P_-(y)={\cal R}P_-(y_0)$ and in this case the projections are smooth with respect to $y$. Note that, ${\cal N}P_+(y)={\cal N}P_+(y_0)$ and ${\cal R}P_-(y)={\cal R}P_-(y_0)$ are equivalent to
\begin{equation}\label{commP}\begin{array}{l}
	P_+(y)=P_+(y)P_+(y_0), \quad P_+(y_0)=P_+(y_0)P_+(y)		\\
	P_-(y)=P_-(y_0)P_-(y), \quad P_-(y_0)=P_-(y)P_-(y_0) .
\end{array}\end{equation}
\medskip

For $i=0,1,\ldots,N$, $j=0,-1,\ldots,-M$, let $U_i^+(t,y)$, [resp. $U_j^-(t,y)$] be the fundamental matrix of
\[
	\dot x = f_{i,x}(u_i^+(t,y),y)x \quad \hbox{[resp. } \dot x = f_{j,x}(u_j^-(t,y),y)x\hbox{]}
\]
in $\R_+$ resp., $\R_-$ that is
\[\begin{array}{l}
	\dot U_i^+(t,y) =  f_{i,x}(u_i^+(t,y),y)U_i^+(t,y), \quad t\ge 0, 	\\
	U_i^+(0,y)=\I .
\end{array}\]
resp.
\[\begin{array}{l}
	\dot U_j^-(t,y) =  f_{j,x}(u_j^-(t,y),y)U_j^-(t,y), \quad t\le 0, 	\\
	U_j^-(0,y)=\I .
\end{array}\]

We have the following
\begin{lemma}\label{EDshift} For any $\tau\in\R$ the linear system
\begin{equation}\label{variat1}
	\dot x = f_{N,x}(u^+_{N}(t,y),y)x,
\end{equation}
resp.
\begin{equation}\label{variat2}
	\dot x = f_{-M,x}(u^-_{-M}(t,y),y)x,
\end{equation}
has an exponential dichotomy on $t\ge \tau$, resp. $t\le \tau$, with exponent $\de$, constant $\tilde K$ independent on $y$ and projections
\[\begin{array}{l}
	Q_+(y) = U^+_N(\tau,y)U^+_N(T_+,y)^{-1}P_+(y)U^+_N(T_+,y) U^+_N(\tau,y)^{-1} 		\\
	Q_-(y) = U^-_{-M}(\tau,y)U^-_{-M}(T_-,y)^{-1}P_-(y)U^-_{-M}(T_-,y) U^-_{-M}(\tau,y)^{-1} .
\end{array}\]
In particular, if $\tau=T_+$, resp. $\tau=T_-$, then $Q_+(y) = P_+(y)$, resp. $Q_-(y) = P_-(y)$, and $\tilde K=K$.
\end{lemma}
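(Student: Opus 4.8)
The plan is to transfer to \eqref{variat1} the exponential dichotomy of \eqref{variat+}, which was established above (via roughness, uniformly in $y$, with projection $P_+(y)$, constant $K$ and exponent $\de$), by a shift of the independent variable, and then to move the base point of the dichotomy from $T_+$ to an arbitrary $\tau$. First I would observe that, writing $V_+(t,y)$ for the fundamental matrix of \eqref{variat+} with $V_+(0,y)=\I$, the substitution $s\mapsto s-T_+$ gives $V_+(t-T_+,y)=U^+_N(t,y)U^+_N(T_+,y)^{-1}=:W_+(t,y)$, the fundamental matrix of \eqref{variat1} normalised at $t=T_+$; hence \eqref{variat1} has an exponential dichotomy on $t\ge T_+$ with projection $P_+(y)$, constant $K$ and exponent $\de$, uniformly in $y$.

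Next, for arbitrary $\tau$, the fundamental matrix of \eqref{variat1} normalised at $\tau$ is $\wt W_+(t,y)=U^+_N(t,y)U^+_N(\tau,y)^{-1}=W_+(t,y)W_+(\tau,y)^{-1}$, and the identity $W_+(t,y)P_+(y)W_+(s,y)^{-1}=\wt W_+(t,y)\,\bigl[W_+(\tau,y)P_+(y)W_+(\tau,y)^{-1}\bigr]\,\wt W_+(s,y)^{-1}$ shows that the correct projection with respect to this normalisation is $Q_+(y)=W_+(\tau,y)P_+(y)W_+(\tau,y)^{-1}$, which is precisely the matrix in the statement; being conjugate to $P_+(y)$, it is a projection of rank $k$. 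When $\tau\ge T_+$ this already yields the dichotomy on $[\tau,\infty)$ with constant $\tilde K=K$, and in particular $Q_+(y)=P_+(y)$ and $\tilde K=K$ when $\tau=T_+$.

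It remains to extend the dichotomy from $[T_+,\infty)$ down to $[\tau,\infty)$ when $\tau<T_+$, which I would do exactly as in the proof of Lemma \ref{extend} (now without the jump matrices $B_i$): control $\wt W_+(t,y)$ and its inverse on the compact interval $[\tau,T_+]$ and glue. The one point needing care — and the main obstacle — is that the resulting constant $\tilde K$ be independent of $y$. For this I would use that $f_N$ is bounded and that, by Lemma \ref{bddw}, $w^+_N(y)=u^+_N(t_N(y),y)$ is bounded uniformly in $y$; since $|\dot u^+_N(t,y)|=|f_N(u^+_N(t,y),y)|$ is bounded, the trajectory $u^+_N(\cdot,y)$ stays in a fixed bounded set on $[\tau,T_+]$ uniformly in $y$, so $f_{N,x}(u^+_N(t,y),y)$ is bounded there uniformly in $y$, and Gronwall's inequality gives a $y$-independent bound $\hat K$ for $|\wt W_+(t,y)|$ and $|\wt W_+(t,y)^{-1}|$ on $[\tau,T_+]$; the same bounds make $|Q_+(y)|$ uniformly bounded. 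Splitting $\tau\le s\le t$ into the three regimes $s\le t\le T_+$, $T_+\le s\le t$, and $s\le T_+\le t$, using idempotency of $Q_+(y)$ in the last one, and absorbing the constants coming from the compact interval into $e^{-\de(t-s)}$ (legitimate since $|t-s|\le T_+-\tau$ there), yields both dichotomy inequalities with the unchanged exponent $\de$ and a $y$-independent $\tilde K=\max\{K,\ldots\}$.

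The negative-time statement for \eqref{variat2} is proved in exactly the same way, with $T_-$, $P_-(y)$, $Q_-(y)$ and $U^-_{-M}$ in place of $T_+$, $P_+(y)$, $Q_+(y)$ and $U^+_N$, now extending the dichotomy on $(-\infty,T_-]$ up to $(-\infty,\tau]$.
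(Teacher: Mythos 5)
Your proposal is correct and follows essentially the same route as the paper: transfer the dichotomy of \eqref{variat+} to \eqref{variat1} on $t\ge T_+$ via the shift identity with the conjugated projection $Q_+(y)$, then, for $\tau<T_+$, control the fundamental matrix on the compact interval $[\tau,T_+]$ with a Gronwall-type, $y$-independent bound (the paper bounds $\bar U(t,y)Q_+(y)\bar U(s,y)^{-1}$ directly, exploiting the decay of the datum at $T_+$, while you bound $\wt W_+$ and its inverse and absorb $e^{\de(T_+-\tau)}$ into the constant — the same estimate up to the size of $\tilde K$), and finally glue across $T_+$ using idempotency of the projection. No gaps.
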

\begin{proof}
As $U_N^+(t,y)$ is the fundamental matrix of \eqref{variat1} in $\R_\pm$, the fundamental matrix of \eqref{variat+} on $\R_+$ is
\[
	\tilde U(t,y) := U^+_N(t+T_+,y)U^+_N(T_+,y)^{-1}
\]
while the fundamental matrix of \eqref{variat1} on $t\ge\tau$ is
\[
	\bar U(t,y) = U^+_N(t,y)U^+_N(\tau,y)^{-1}.
\]
Then
\[
	\bar U(t,y) U^+_N(\tau,y) = U^+_N(t,y) = \tilde U(t-T_+,y)U^+_N(T_+,y).
\]
So we get, for $t \ge s \ge T_+$:
\[\begin{array}{l}
	|\bar U(t,y) Q_+(y) \bar U(s,y)^{-1}| =	\\
	|\tilde U(t-T_+,y)U^+_N(T_+,y)U^+_N(\tau,y)^{-1}Q_+(y)U^+_N(\tau,y)U^+_N(T_+,y)^{-1}\tilde U(s-T_+,y)^{-1}| = 	\\
	|\tilde U(t-T_+,y)P_+(y)\tilde U(s-T_+,y)^{-1}| \le Ke^{-\de(t-s)}
\end{array}\]
and similarly
\[\begin{array}{l}
	|\bar U(s,y) (\I-Q_+(y)) \bar U(t,y)^{-1}| =	\\
	|\tilde U(s-T_+,y)(\I-P_+(y))\tilde U(t-T_+,y)^{-1}| \le Ke^{-\de(t-s)}.
\end{array}\]
Hence \eqref{variat1} has an exponential dichotomy on $t\ge T_+$ with projection $Q_+(y)$, constant $\tilde K=K$ and exponent $\delta$. If $\tau\ge T_+$ the proof is finished. Moreover if $\tau=T_+$ we have
\[
	Q_+(y) = P_+(y) \quad \hbox{and}\quad Q_-(y) = P_-(y).
\]
If $\tau<T_+$, for $\tau\le s\le t\le T_+$ let
\[
	v(t) := \bar U(t,y) Q_+(y) \bar U(s,y)^{-1} =\tilde U(t-T_+,y)P_+(y)\tilde U(s-T_+,y)^{-1}.
\]
Then $v(t)$ satisfies
\[
	\dot v(t) = f_{N,x}(u^+_N(t,y),y)v(t)
\]
with $v(T_+)=P_+(y)\tilde U(s-T_+,y)^{-1}$. Hence
\[
	v(t) = P_+(y))\tilde U(s-T_+,y)^{-1} - \int_t^{T_+} f_{N,x}(u^+_N(s,y),y)v(s) ds.
\]
So, using $|P_+(y)\tilde U(s-T_+,y)^{-1}|\le Ke^{-\de(T_+-s)}$:
\[
	|v(t)| \le Ke^{-\de(T_+-s)} + \int_t^{T_+} \bar F |v(s)| ds.
\]
where $\bar F$ is an upper bound for $|f_{N,x}(u^+_N(s,y),y)|$. Setting $r(t):\int_t^{T_+} |v(s)| ds$ we get
\[
	\frac{d}{dt}[r(t)e^{\bar F t}] = [r'(t)+\bar Fr(t)]e^{\bar F t} \ge -Ke^{\bar F t}.
\]
Integrating on $[t,T_+]$ we get, since $r(T_+)=0$, $- r(t)e^{\bar F t} \ge -K\bar F^{-1} [e^{\bar F T_+}-e^{\bar F t}]$. Hence
\[
	\int_t^{T_+} \bar F |v(s)| ds = \bar F r(t) \le K [e^{\bar F (T_+-t)}-1]
\]
and then
\[\begin{array}{l}
	|v(t)| \le Ke^{-\de(T_+-s)} + K e^{\bar F (T_+-t)} 		\\
	= K[ e^{-\de(T_+-t)} + e^{\bar F(T_+-t)+\de(t-s)}]e^{-\de(t-s)} \le \hat K e^{-\de (t-s)}
\end{array}\]
where
\[
	\hat K = K[1+e^{(\bar F+\de)(T_+-\tau)}].
\]
Note that $\hat K$ is independent of $y$. Finally, for $\tau\le s\le T_+\le t$ we have
\[\begin{array}{l}
	|\bar U(t,y) Q_+(y) \bar U(s,y)^{-1}| \le |\bar U(t,y) Q_+(y) \bar U(T_+,y)^{-1}| |\bar U(T_+,y) Q_+(y) \bar U(s,y)^{-1}| 	\\
	\le Ke^{-\de(t-T_+)} \hat Ke^{\de(s-T_+)} = K\hat Ke^{\de(t-s)}.
\end{array}\]
By a similar argument we prove that
\[
	|\tilde U(t-T_+,y)(\I-P_+(y))\tilde U(s-T_+,y)^{-1}|\le \tilde Ke^{\de(t-s)}
\]
for $\tau\le t\le s$.

The proof that \eqref{variat2} has an exponential dichotomy on $t\le\tau$, with projection $Q_-(y)$, exponent $\de$ and a constant independent of $y\in\R^m$ is similar.
\end{proof}
\medskip

From the proof of the parametric stable (resp. unstable) Theorem (see \cite[p. 18]{P1}) it follows that, all solutions $w(t,y)$ of
$\dot w = f_N(w,y)$ such that $\sup_{t\ge 0}|w(t,y)-w^+(y)|\le\Delta\ll 1$ (resp. $\sup_{t\le 0}|w(t,y)-w^-(y)|\le\Delta\ll 1$) can be obtained
as fixed points of a uniform contraction, $C^r$ with respect to $y$, on the space of bounded functions. It follows, then, that these solutions are bounded together with their derivatives.

The following Lemma states that the bounds for $u^+_N(t,y)$, $u^-_{-M}(t,y)$ and their derivatives with respect to $y$ can be taken independent of $y$. Due to its technical character the proof of Lemma \ref{Onu+} is postponed in the Appendix.

\begin{lemma}\label{Onu+} Assume $A_1)-A_3)$. Then $u^+_N(t,y)$ and its derivatives with respect to $y$ are bounded  uniformly with
respect to $y$, on $t\ge t_N(y)$. Similarly, $u^-_{-M}(t,y)$ and its derivatives with respect to $y$ are bounded, uniformly with respect to $y$, on $t\le t_{-M}(y)$.
\end{lemma}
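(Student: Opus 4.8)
The plan is to split the half-line $t\ge t_N(y)$ as $[t_N(y),\bar T]\cup[\bar T,\infty)$: on the compact part uniform bounds come from a flow representation together with Gronwall estimates relying on Lemma \ref{bddw}, and on the tail they come from the $y$-uniform contraction underlying the parametric stable manifold theorem recalled before the statement. Fix $\Delta\ll1$, independent of $y$, so small that bounded solutions of $\dot x=f_N(x,y)$ staying within $\Delta$ of the equilibrium $w_+(y)$ are fixed points of a contraction, and note that this contraction may be taken \emph{uniform in $y$}: indeed $f_N$ and its derivatives are uniformly bounded and, by $A_3)$, $\dot x=f_{N,x}(w_+(y),y)x$ has an exponential dichotomy on $\R$ with constants $M,\de_0$ independent of $y$ and with spectral projection $P^0_+(y)$ that is $C^r$ in $y$ with uniformly bounded derivatives. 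Since $u_N^+(t,y)=u_+(t,y)$ for $t\ge t_N(y)$, $u_+(t,y)-w_+(y)\to0$ uniformly in $y$ by $A_2)$, and $t_N(y)$ is bounded — and $C^r$ with bounded derivatives, by the implicit function theorem applied to the transversal crossing relation in $a_2)$, as for $w_1^+(y)$ in Lemma \ref{bddw} — there is $\bar T\ge\sup_y t_N(y)$, independent of $y$, with $|u_N^+(t,y)-w_+(y)|\le\Delta$ for all $t\ge\bar T$ and all $y$. Only $u_N^+$ is treated; the statement for $u_{-M}^-$ on $t\le t_{-M}(y)$ is identical after $t\mapsto-t$, since $-f_{-M,x}(w_-(y),y)$ also has a uniform spectral gap by $A_3)$.

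On $[t_N(y),\bar T]$, write $u_N^+(t,y)=\phi(t-t_N(y),w_N^+(y),y)$, where $\phi(s,\xi,y)$ is the flow of the globally defined, uniformly bounded field $f_N(\cdot,y)$ and $t-t_N(y)\in[0,\bar T]$. Since $f_N\in C^r$ with uniformly bounded derivatives, the variational equations of this flow over the time span $[0,\bar T]$ have uniformly bounded coefficients, so $\phi$ and its derivatives up to order $r$ in $(s,\xi,y)$ are bounded uniformly; together with the uniform bounds on $w_N^+(y)$, $t_N(y)$ and their derivatives (Lemma \ref{bddw} and the previous paragraph), the chain rule then shows that $\partial_y^{\al}u_N^+(t,y)$, $|\al|\le r$, is bounded uniformly in $y$ for $t\in[t_N(y),\bar T]$. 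In particular $u_N^+(\bar T,y)$ and its $y$-derivatives up to order $r$ are bounded uniformly in $y$.

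For $t\ge\bar T$ put $u_N^+(t+\bar T,y)=w_+(y)+z(t,y)$, $t\ge0$; then $\|z(\cdot,y)\|_\infty\le\Delta$ and $z$ solves $\dot z=f_{N,x}(w_+(y),y)z+g(z,y)$ with $g(z,y):=f_N(w_+(y)+z,y)-f_{N,x}(w_+(y),y)z$, so $g(0,y)=0$, $g_z(0,y)=0$, and $g,g_z$ are uniformly small for $|z|\le\Delta$. If $\Phi(t,y)$ denotes the fundamental matrix of $\dot x=f_{N,x}(w_+(y),y)x$, the bounded solution with $P^0_+(y)z(0)=\xi$ is the fixed point of
\[\begin{array}{l}
	(\mathcal T_{y,\xi}z)(t)=\Phi(t,y)\xi+\dis\int_0^t\Phi(t,y)P^0_+(y)\Phi(s,y)^{-1}g(z(s),y)\,ds		\\
	\qquad-\dis\int_t^{\infty}\Phi(t,y)(\I-P^0_+(y))\Phi(s,y)^{-1}g(z(s),y)\,ds,
\end{array}\]
which, by the $y$-uniform dichotomy and the $y$-uniform smallness of $g$, is a contraction on $\{z\in C_b(\R_+,\R^n):\|z\|_\infty\le\Delta\}$ with contraction constant and $C^r$ dependence on $(y,\xi)$ uniform in $y$; hence the fixed point $z(\cdot,y,\xi)$ and its $(y,\xi)$-derivatives up to order $r$ are bounded uniformly in $y$, for $\xi$ in the fixed bounded set of admissible values. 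Since $\xi(y):=P^0_+(y)(u_N^+(\bar T,y)-w_+(y))$ — the value of $\xi$ for which $z(\cdot,y,\xi)=u_N^+(\cdot+\bar T,y)-w_+(y)$ — is bounded with bounded $y$-derivatives (a bounded expression in $P^0_+(y)$, $u_N^+(\bar T,y)$, $w_+(y)$ and their derivatives, all controlled above), $u_N^+(t+\bar T,y)-w_+(y)=z(t,y,\xi(y))$ and its $y$-derivatives up to order $r$ are bounded uniformly in $y$ for $t\ge0$. Together with the compact-interval estimate and with $w_+(y)\in C^r$ bounded, this proves the lemma.

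The main obstacle is precisely the uniformity with respect to the non-compact parameter $y\in\R^m$: the parametric stable manifold theorem as usually quoted yields $C^r$ dependence on $y$ but not, a priori, $y$-independent bounds. What makes it go through is that $A_1)-A_3)$ force every quantity entering the Lyapunov--Perron argument — the dichotomy constant $M$ and exponent $\de_0$, the radius $\Delta$, the Lipschitz bounds of $g$, the derivatives of $P^0_+(y)$ — to be independent of $y$, so the contraction is uniform and its fixed point, together with its derivatives via smooth dependence on parameters (a fiber-contraction argument), inherits $y$-uniform bounds; a secondary, routine point is the bookkeeping of the $y$-dependence of the crossing time $t_N(y)$ when differentiating on $[t_N(y),\bar T]$, done exactly as for the derivatives of $w_1^+(y)$ in Lemma \ref{bddw}.
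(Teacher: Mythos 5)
Your splitting into a compact piece $[t_N(y),\bar T]$ and a tail, and your treatment of the compact piece via the flow representation and Lemma \ref{bddw}, match the paper. The tail is where your route genuinely diverges and where there is a gap. You run the Lyapunov--Perron contraction with the $y$-dependent frozen linear part $\dot x=A_0(y)x$, $A_0(y):=f_{N,x}(w_+(y),y)$, and claim that the contraction is uniformly $C^r$ in $y$, so that the fixed point's $y$-derivatives inherit $y$-uniform bounds by a fiber-contraction argument. What that claim actually requires is not just the uniform dichotomy constants and $\sup_y|P^0_+(y)|<\infty$, nor even uniform bounds on $\partial_y^kP^0_+(y)$ (which you assert without proof): differentiating your operator in $y$ brings in $\partial_y^k\bigl[e^{A_0(y)\tau}P^0_+(y)\bigr]$ integrated over an unbounded time range, so you need these derivatives to decay exponentially in $\tau\ge 0$ (and the complementary kernel $e^{A_0(y)\tau}(\I-P^0_+(y))$ for $\tau\le 0$) with constants independent of $y$. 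This does not follow from the dichotomy estimates alone, and naive differentiation, $\partial_y e^{A_0(y)\tau}=\int_0^\tau e^{A_0(\tau-\si)}A_{0,y}e^{A_0\si}\,d\si$, contains growing modes; one must argue through the spectral representation (a fixed contour enclosing the stable spectrum for every $y$, with a uniform resolvent bound obtained from $|\la(y)|\le R$, $|\mathrm{Re}\,\la(y)|>\de_0$ and the boundedness of $f_{N,x}$), accepting a slightly smaller exponent. As written, the decisive uniformity step is asserted rather than proved, and it is precisely the point of the lemma; the paper itself notes that the standard parametric contraction only yields derivative bounds that may depend on the parameter.

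The paper's proof is organized to avoid this issue altogether. It freezes the linear part at a fixed $\bar y$, so the Green's kernel in the contraction carries no $y$-dependence and only the nonlinearity does; this gives existence and $C^r$ smoothness of $u^+_N(t+T,y)$ with bounds that may depend on $y$. Uniformity is then recovered by a separate, softer argument: $u^+_{N,y}(t+T_+,y)$ is the bounded solution of the inhomogeneous variational equation $\dot x=f_{N,x}(u^+_N(t+T_+,y),y)x+f_{N,y}(u^+_N(t+T_+,y),y)$ with $P_+(y)x(0)=P_+(y)u^+_{N,y}(T_+,y)$, and the representation formula for bounded solutions combined with the $y$-uniform dichotomy of \eqref{variat+} (constants $K,\de$ coming from roughness) gives the $y$-independent bound $K(M_1+2\bar F\de^{-1})$, higher derivatives being handled by the same device; no smoothness of any projection in $y$ is ever needed. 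Your approach can be completed by supplying the uniform exponential bounds on the $y$-derivatives of the frozen kernels sketched above, but until that is done the central claim is unsupported; alternatively, you can keep your setup for existence and smoothness and graft on the paper's dichotomy-representation step to obtain the uniform bounds.
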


\medskip

From \cite[Theorem 2]{B} we get the following
\begin{theorem}\label{bddsol} Suppose  that $A_1)-A_4)$  hold and let $\ga,\be>0$ be such that $r\beta <\gamma<\delta$. Then there exist $\rho>0$, $\bar\ep>0$ and $\bar\sigma>0$ such that for $(\al,\ep)\in\R^m\times\R$ and $\xi_+\in {\cal R}P_+(\al)$, with $|\al-y_0|<\bar\si$, $|\xi_+|<\bar\sigma$, and $|\ep|<\bar\ep$, system
\begin{equation}\label{lasteq}\begin{array}{l}
	\dot x = f_N(x,y)			\\
	\dot y = \ep g(x,y,\ep)	\\
	P_+(\al)[x(T_+)-u(T_+,\al)] = \xi_+	\\
	y(T_+)=\al,
\end{array}\end{equation}
has a unique solution $(x^+_N(t,\xi_+,\al,\ep),y^+_N(t,\xi_+,\al,\ep))$, $t\ge T_+$ such that
\begin{equation}\label{contplus}
	\sup_{t\ge T_+} |x^+_N(t,\xi_+,\al,\ep) - u^+_N(t,y_+(t,\xi_+,\al,\ep))| < \rho.
\end{equation}
Moreover,
\begin{equation}\label{contplus2}
	\sup_{t\ge T_+} |x^+_N(t,\xi_+,\al,\ep) - u^+_N(t,y_+(t,\xi_+,\al,\ep))| \to 0
\end{equation}
as $|\xi_+|+|\ep|\to 0$, uniformly with respect to $\al$. Similarly for $(\al,\ep)\in\R^m\times\R$ and $\xi_-\in{\cal N}P_-(\al)$ such that  $|\al-y_0|<\bar\si$, $|\xi_-|<\bar\si$ system
\[\begin{array}{l}
	\dot x = f_{-M}(x,y)			\\
	\dot y = \ep g(x,y,\ep)	\\
	(\I-P_-(\al))[x(T_-)-u(T_-,\al)] = \xi_-	\\
	y(T_-)=\al
\end{array}\]
has a unique solution $(x^-_{-M}(t,\xi_-,\al,\ep),y^-_{-M}(t,\xi_-,\al,\ep))$, $t\le T_-$ such that
\begin{equation}\label{contmin}
	\sup_{t\le T_-} |x^-_{-M}(t,\xi_-,\al,\ep) - u^-_{-M}(t,y_-(t,\xi_-,\al,\ep))| < \rho.
\end{equation}
Moreover
\begin{equation}\label{contmin2}
	\sup_{t\le T_-} |x^-_{-M}(t,\xi_-,\al,\ep) - u^-_{-M}(t,y_-(t,\xi_-,\al,\ep))| \to 0
\end{equation}
as $|\xi_-|+|\ep|\to 0$, uniformly with respect to $\al$. Moreover, for any $p\le r-1$, the function
\[
(\xi_+,\al,\ep) \mapsto (x^+_N(t,\xi_+,\al,\ep),y^+_N(t,\xi_+,\al,\ep))
\]
is of class $C^p$ and, for $t\ge T_+$, the $p$-th order derivatives are bounded above in absolute value by $K'_p e^{(p+1)\be|t-t_N(y)|}$, where $K'_p$ is a suitable constant. Similarly, for $t\le T_-$, the map
\[
(\xi_-,\al,\ep) \mapsto (x^-_{-M}(t,\xi_-,\al,\ep),y^-_{-M}t,\xi_-,\al,\ep))
\]
is of class $C^p$ and the $p$-th order derivatives are bounded above in absolute value by $K''_p e^{(p+1)\be|t-t_{-N}(y)|)}$, where $K''_p$ is a suitable constant.
\end{theorem}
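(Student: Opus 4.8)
The plan is to reduce the statement to the cited result \cite[Theorem 2]{B} by first reformulating the problem on the shifted time variable so that the relevant dichotomy becomes the standard one on a half-line. Concretely, for the statement concerning $t\ge T_+$ I would first move the base point: set $\tilde x(t)=x(t+T_+)$, $\tilde y(t)=y(t+T_+)$, so that the problem becomes finding solutions of $\dot{\tilde x}=f_N(\tilde x,\tilde y)$, $\dot{\tilde y}=\ep g(\tilde x,\tilde y,\ep)$ on $t\ge 0$ that stay $\rho$-close to $u^+_N(t+T_+,y)$ and satisfy the boundary conditions at $t=0$. By Lemma \ref{Onu+}, $u^+_N(t+T_+,y)$ and its $y$-derivatives are bounded uniformly in $y$ on $t\ge 0$ (since $T_+>\sup_y t_N(y)$), so the nonautonomous ``frozen'' reference solution has bounded coefficients; by the discussion preceding the theorem, the variational system \eqref{variat+} has an exponential dichotomy on $\R_+$, uniform in $y$, with projection $P_+(y)$, constant $K$, exponent $\de$, rank $k$; and by \eqref{commP} the projections are smooth in $y$ near $y_0$. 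These are exactly the hypotheses under which \cite[Theorem 2]{B} applies. I would then invoke that theorem to produce the unique solution $(x^+_N,y^+_N)$ with the closeness estimate \eqref{contplus} and the $C^p$-regularity with the exponential growth bound $K'_pe^{(p+1)\be|t-t_N(y)|}$ on $p$-th derivatives.

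The second ingredient is the convergence statement \eqref{contplus2}: that the sup-distance tends to $0$ as $|\xi_+|+|\ep|\to 0$, uniformly in $\al$. This I would get from the construction itself: in the Lyapunov--Schmidt/fixed-point scheme underlying \cite[Theorem 2]{B}, the solution is obtained as the fixed point of a uniform contraction whose ``forcing'' terms are controlled by $|\xi_+|$, $|\ep|$ and the modulus of continuity of $f_N$; tracking constants shows the fixed point depends continuously on $(\xi_+,\ep)$ with a modulus independent of $\al$ (because all the dichotomy data $K,\de$ and the bounds from Lemma \ref{Onu+} are independent of $\al$). Equivalently, when $\xi_+=0$ and $\ep=0$ the unique solution is $x=u^+_N(t,\al)$, $y=\al$, and uniform continuity of the fixed-point map in $(\xi_+,\ep)$ at the origin gives \eqref{contplus2}. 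The case $t\le T_-$ is handled by the mirror-image argument: reverse time, use Lemma \ref{Onu+} for $u^-_{-M}(t+T_-,y)$ on $t\le 0$, use the dichotomy of \eqref{variat-} on $\R_-$ with projection $P_-(y)$ and nullspace condition $(\I-P_-(\al))[x(T_-)-u(T_-,\al)]=\xi_-$, and apply the $t\le 0$ version of \cite[Theorem 2]{B}.

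One point needing a little care is the passage from the dichotomy of $\dot x=f_{N,x}(u^+_N(t+T_+,y),y)x$ on $\R_+$ to the form of the boundary condition used in \eqref{lasteq}, which is imposed at $t=T_+$ rather than at $t=0$ of the shifted variable; but this is precisely the content of Lemma \ref{EDshift} with $\tau=T_+$, which gives $Q_+(y)=P_+(y)$ and $\tilde K=K$, so no new constant is introduced and the boundary projection in \eqref{lasteq} is consistent with the dichotomy projection. A second point is that the reference solution $u^+_N(t,y)$ is only defined and relevant for $t\ge t_N(y)$, while the dichotomy was set up starting from $T_+\ge t_N(y)$; but again since $T_+>\sup_y t_N(y)$ and, by Theorem \ref{bddsol}'s own hypothesis $A_2)$, $u^+_N(t,y)\to w_+(y)$ uniformly in $y$, the coefficients $f_{N,x}(u^+_N(t,y),y)$ are uniformly bounded and uniformly close to $f_{N,x}(w_+(y),y)$ for $t$ large, so Lemma \ref{EDshift} extends the dichotomy back to $t\ge T_+$ with a $y$-independent constant, and we are in the setting of \cite[Theorem 2]{B} verbatim.

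The main obstacle, as I see it, is not any single estimate but the bookkeeping needed to certify that \emph{all} constants produced by the cited theorem ($\rho$, $\bar\ep$, $\bar\si$, $K'_p$, $K''_p$) can be chosen \emph{uniformly in} $\al$ over a neighbourhood $|\al-y_0|<\bar\si$: this requires that the dichotomy constants $K,\de$, the bounds from Lemma \ref{Onu+}, the smoothness bounds on $P_\pm(y)$ from \eqref{commP}, and the $C^r$-bounds on $f_N,f_{-M},g$ are all uniform in the parameter, which they are by construction but must be assembled. Once that uniformity is in hand, the theorem follows by direct citation of \cite[Theorem 2]{B} applied to the shifted systems.
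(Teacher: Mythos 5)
Your proposal follows essentially the same route as the paper: shift time by $T_+$ (resp. $T_-$), use Lemma \ref{Onu+} to get bounds on $u^+_N(t+T_+,y)$, $u^-_{-M}(t+T_-,y)$ and their $y$-derivatives uniform in $y$, use the uniform dichotomy of \eqref{variat+}, \eqref{variat-} with projections $P_\pm(y)$, and then cite the half-line result of \cite{B}, obtaining uniformity in $\al$ of \eqref{contplus2}, \eqref{contmin2} by tracking the constants in the fixed-point scheme (the paper attributes exactly this to \cite[(20)--(23)]{B} together with $\sup_{t\in\R_\pm}|\dot y_\pm|=O(\ep)$). The one detail you gloss over is the boundary projection: the cited theorem in \cite{B} imposes its condition with the \emph{fixed} projection $P_+(y_0)$ (resp. $\I-P_-(y_0)$), not with the $\al$-dependent $P_+(\al)$ appearing in \eqref{lasteq}. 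The paper handles this by feeding \cite{B} the datum $P_+(y_0)\xi_+\in{\cal R}P_+(y_0)$ and then verifying, via \eqref{commP}, that the resulting solution satisfies $P_+(\al)[x(T_+)-u^+_N(T_+,\al)]=P_+(\al)P_+(y_0)\xi_+=\xi_+$ because $P_+(\al)P_+(y_0)=P_+(\al)$ and $\xi_+\in{\cal R}P_+(\al)$. Your appeal to Lemma \ref{EDshift} with $\tau=T_+$ only addresses where the condition is imposed (which is already settled by the time shift), not the $y_0$-versus-$\al$ projection mismatch; adding the short \eqref{commP} argument closes this and makes your proof coincide with the paper's.
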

\begin{proof} In \cite{B} the equations are considered for $t\ge 0$ or $t\le 0$ and instead of $P_\pm(\al)$ it is considered $P_\pm(y_0)$. To obtain the result as in Theorem \ref{bddsol} for $\xi_+\in{\cal R}P_+(\al)$, (resp. $\xi_-\in{\cal N}P_-(\al)$) we take $P_+(y_0)\xi_+\in{\cal R}P_+(y_0)$ (resp. $(\I-P_-(y_0))\xi_-\in{\cal N}P_-(y_0)$) and apply \cite[Theorem 1]{B} with $u^+_N(t+T_+,y)$, resp. $u^-_{-M}(t+T_-,y)$, instead of $u(t,y)$. Suppose, to fix ideas, that $t\ge 0$. We know that \eqref{variat+} has an exponential dichotomy on $\R_+$ with projection $P_+(y)$ and, from Lemma \ref{Onu+} that $u^+_N(t+T_+,y)$ and its derivatives with respect to $y\in\R^m$ are bounded uniformly with respect to $y\in\R^m$. From \cite[Theorem 1]{B} we obtain then a unique solution
\[
	(\tilde x(t,\xi_+,\al,\ep),\tilde y(t,\xi_+,\al,\ep))
\]
of \eqref{pert} such that
\[\begin{array}{l}
	P_+(y_0)[\tilde x(0)-u^+_N(T_+,\al)] = P_+(y_0)\xi_+	\\
	\tilde y(0)=\al
\end{array}\]
and
\[\begin{array}{l}
	\sup_{t\ge0} |\tilde x(t,\xi_+,\al,\ep)-u^+_N(t+T_+,\tilde y(t,\xi_+,\al,\ep))| <\rho,	\\
	\sup_{t\ge0} |\tilde x(t,\xi_+,\al,\ep)-u^+_N(t+T_+,\tilde y(t,\xi_+,\al,\ep))| \to 0	
\end{array}\]
as $|\xi_+|+|\ep|\to 0$, uniformly with respect to $\al$. Setting
\[\begin{array}{l}
	x^+_N(t,\xi_+,\al,\ep) = \tilde x(t-T_+,\xi_+,\al,\ep)	\\
	y^+_N(t,\xi_+,\al,\ep) = \tilde y(t-T_+,\xi_+,\al,\ep)
\end{array}\]
we see that $(x^+_N(t,\xi_+,\al,\ep), y^+_N(t,\xi_+,\al,\ep))$ satisfies
\[\begin{array}{l}
	\dot x = f_N(x,y)			\\
	\dot y = \ep g(x,y,\ep)	\\
	y(T_+)=\al
\end{array}\]
and
\[
	P_+(\al)[x(T_+)-u^+_N(T_+,\al)] = P_+(\al)P_+(y_0)\xi_+ =\xi_+
\]
since $P_+(\al)P_+(y_0)=P_+(\al)$.
A similar argument works for $t\le T_-$. Finally, we observe that, although in \cite{B} it is not explicitly stated that \eqref{contplus}, \eqref{contmin} hold uniformly with respect to $\al$, this fact easily follow from \cite[(20)-(23)]{B} and $\sup_{t\in\R_\pm} |\dot y_\pm(t,\xi_\pm,\al,\ep)|=O(\ep)$.
\end{proof}
\begin{remark}\label{rem1} According to assumption $A_3)$, $x=w_\pm(y)$ are normally hyperbolic manifolds for the system
\[
	\dot x = f_N(x,y), \quad \dot y = 0.
\]
These manifolds perturb to normally hyperbolic invariant manifolds $x = \tilde w_\pm(y,\ep)$ for the system
\[\begin{array}{l}
	\dot x = f_N(x,y), \\
	\dot y = \ep g(x,y,\ep)
\end{array}\]
(see, for example, \cite{BF-1,Sa}). Let $y_c(t,\al,\ep)$ be the solution of $\dot y = g(\tilde w_\pm(y,\ep),y,\ep)$ such that $y_c(0,\al,\ep) = \al$.

It follows from \cite[Theorem 1]{BP1} that, for $|\alpha-y_0|$ and $|\xi^0_+|$ sufficiently small, with $\al\in\R^m$ and $\xi^0_+\in{\cal R}P_+^0(y_0)$, there exists a unique solution of
\[\begin{array}{l}
	\dot x = f_N(x,y)			\\
	\dot y = \ep g(x,y,\ep)
\end{array}\]
such that
\[\begin{array}{l}
	\sup_{t\ge T_+}e^{\be t}|x(t,\xi^0_+,\al,\ep) - \tilde w_+(y(t,\xi^0_+,\al,\ep),\ep)| \le \mu_1	\\
	\sup_{t\ge T_+}e^{\be t}|y(t,\xi^0_+,\al,\ep) - y_c(\ep (t-T_+),\al,\ep)| \le \mu_2
\end{array}\]
and $P_+^0(y_0)[x(T_+,\xi^0_+,\al,\ep) - \tilde w_+(y(0,\xi^0_+,\al,\ep),\ep)] =\xi^0_+$.

As $\lim_{\ep\to 0} |\tilde w_+(y,\ep) - w_+(y)| = 0$ and $\lim_{t\to\infty}|u_N(t,y) - w_+(y)| = 0$ uniformly with respect to $y$ we conclude that
\[
	\sup_{t\ge T}|x(t,\xi^0_+,\al,\ep) - u_N(t,y(t,\xi^0_+,\al,\ep))| < \rho	
\]
where $T\ge T_+$ is sufficiently large. From continuous dependence we see that the above estimate holds with $\sup_{t\ge T_+}$ instead of $\sup_{t\ge T}$, provided $|\xi^0_+|+|\ep| \ll 1$ is sufficiently small, and then
\[
	x^+_N(t,\xi_+,\al,\ep) =  x(t,\xi^0_+,\al_0,\ep), \quad y^+_N(t,\xi_+,\al,\ep) =  y(t,\xi^0_+,\al_0,\ep),
\]
for some $\xi^0_+$ and $\al_0$. As a consequence the solutions given in Theorem \ref{bddsol} satisfy
\[\begin{array}{l}
	\sup_{t\ge T_+}e^{\be t}|x^+_N(t,\xi_+,\al,\ep) - \tilde w_+(y_N(t,\xi^0_+,\al_0,\ep),\ep)| \le \mu_1	\\
	\sup_{t\ge T_+}e^{\be t}|y^+_N(t,\xi_+,\al,\ep) - y_c(\ep (t-T_+),\xi^0_+,\ep)| \le \mu_2 .
\end{array}\]
Similarly we see that
\[\begin{array}{l}
	\sup_{t\le T_-}e^{-\be t}|x^-_{-M}(t,\xi_-,\al,\ep) - \tilde w_-(y_{-M}(t,\xi^0_-,\al_0,\ep),\ep)| \le \mu_1	\\
	\sup_{t\le T_-}e^{-\be t}|y^-_{-M}(t,\xi_-,\al,\ep) - y_c(\ep (t+T_-),\xi^0_-,\ep)| \le \mu_2.
\end{array}\]
\end{remark}

\medskip

In the remaining part of this section we extend the solutions obtained in Theorem \ref{bddsol} to continuous, piecewise $C^1$ solutions of \eqref{pert} in $\R_+$ resp. $\R_-$. We have the following
\begin{theorem}\label{extension} There exist $\rho>0$, bounded $C^r$-functions
\[
t^*_{-M}(\xi_-,\al,\ep)<\ldots<t^*_{-1}(\xi_-,\al,\ep)<0<t^*_1(\xi_+,\al,\ep) < \ldots < t ^*_N(\xi_+,\al,\ep),
\]
and continuous, piecewise $C^r$ solutions of \eqref{pert}
\[
(x_\pm(t,\xi_\pm,\al,\ep),y_\pm(t,\xi_\pm,\al,\ep))
\]
defined for $t\ge 0$ and $t\le 0$ resp., such that
\[\begin{array}{l}
	\lim_{(\xi_+,\ep)\to 0}|t^*_i(\xi_+,\al,\ep)-t_i(\al)| = 0, 		\\
	\lim_{(\xi_-,\ep)\to 0}|t^*_j(\xi_-,\al,\ep)-t_j(\al)| = 0
\end{array}\]
($i=1,\ldots,N$, $j=-1,\ldots,-M$) uniformly with respect to $\al\in\R^m$ and
\[\begin{array}{l}
	c_{i-2}<h(x_+(t,\xi_+,\al,\ep),y_+(t,\xi_+,\al,\ep))<c_{i-1}, \quad \hbox{ for $t^*_{i-1}(\xi_+,\al,\ep)<t<t^*_i(\xi_+,\al,\ep)$}	\\
	h(x_+(t,\xi_+,\al,\ep),y_+(t,\xi_+,\al,\ep))>c_{N-1}, \quad \hbox{ for $t>t^*_N(\xi_+,\al,\ep)$}		\\
	c_j<h(x_-(t,\xi_-,\al,\ep),y_-(t,\xi_-,\al,\ep))<c_{j+1}, \quad \hbox{ for $t^*_j(\xi_-,\al,\ep)<t<t^*_{j+1}(\xi_-,\al,\ep)$}	\\
	h(x_-(t,\xi_-,\al,\ep),y_-(t,\xi_-,\al,\ep))<c_{-M}, \quad \hbox{ for $t<t^*_{-M}(\xi_-,\al,\ep)$}		\\
	h(x_+(t^*_i(\xi_+,\al,\ep),\xi_+,\al,\ep),y_+(t^*_i(\xi_+,\al,\ep),\xi_+,\al,\ep)) = c_{i-1}	\\
	h(x_-(t^*_j(\xi_-,\al,\ep),\xi_-,\al,\ep),y_-(t^*_j(\xi_-,\al,\ep),\xi_-,\al,\ep)) = c_j	\\
	\frac{\partial}{\partial t} h(x_+(t,\xi_+,\al,\ep),y_+(t,\xi_+,\al,\ep))_{|t=t^*_i(\xi_+,\al,\ep)} > \eta 	\\
	\frac{\partial}{\partial t} h(x_-(t,\xi_-,\al,\ep),y_-(t,\xi_-,\al,\ep))_{|t=t^*_j(\xi_-,\al,\ep)} > \eta
\end{array}\]
where $t^*_0(\xi_\pm,\al,\ep)=0$. Moreover
\[\begin{array}{ll}
	(x_+(t,\xi_+,\al,\ep),y_+(t,\xi_+,\al,\ep)) = (x^+_N(t,\xi_+,\al,\ep), y^+_N(t,\xi_+,\al,\ep)) & \hbox{for $t\ge T_+$}		\\
	(x_-(t,\xi_+,\al,\ep),y_-(t,\xi_+,\al,\ep)) = (x^-_{-M}(t,\xi_+,\al,\ep), y^-_{-M}(t,\xi_+,\al,\ep)) & \hbox{for $t\le T_-$}
\end{array}\]
and
\begin{equation}\label{propsol-A}\begin{array}{ll}
	\rho > \sup_{t\ge 0} |x_+(t,\xi_+,\al,\ep) - u(t,y_+(t,\xi_+,\al,\ep))| \to 0 & \hbox{as $|\xi_+|+|\ep|\to 0$}	\\
	\rho > \sup_{t\le 0} |x_-(t,\xi_-,\al,\ep) - u(t,y_-(t,\xi_-,\al,\ep))| \to 0 & \hbox{as $|\xi_-|+|\ep|\to 0$}
\end{array}\end{equation}
uniformly with respect to $\al$.
\end{theorem}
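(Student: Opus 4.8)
The plan is to build $(x_\pm,y_\pm)$ by a finite downward induction over the domains $\Om_N,\Om_{N-1},\dots,\Om_0$ (resp. $\Om_{-M},\dots,\Om_0$), gluing finitely many pieces of genuine smooth ODE solutions. For $t\ge 0$ one starts from the solution $(x^+_N(t,\xi_+,\al,\ep),y^+_N(t,\xi_+,\al,\ep))$ of Theorem \ref{bddsol}, defined on $t\ge T_+$, satisfying $y^+_N(T_+,\ldots)=\al$, and by \eqref{contplus}--\eqref{contplus2} staying within $\rho$ of $u^+_N(t,y^+_N(t,\ldots))$. Since $T_+>\sup_y t_N(y)$, assumption $A_2)$ together with $a_3)$ and the uniform convergence $u^+_N(t,y)\to w_+(y)$ yield $\mu>0$ with $h(u_+(t,y),y)\ge c_{N-1}+\mu$ for all $t\ge T_+$, $y\in\R^m$; hence, once $\rho$ is small, $(x^+_N,y^+_N)$ lies where \eqref{pert} reads $\dot x=f_N(x,y)$, so it is a bona fide piece of a solution of the discontinuous system and will be the outermost one, valid for $t\ge t^*_N$.

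Inductive step: suppose the solution has been continued, as a continuous piecewise $C^r$ function, back to a $C^r$, bounded crossing time $t^*_i(\xi_+,\al,\ep)$ at which $h=c_{i-1}$. On the next domain one solves $\dot x=f_{i-1}(x,y)$, $\dot y=\ep g(x,y,\ep)$ \emph{backward} in time from the value attained at $t=t^*_i$. Because $f_{i-1},g$ and their derivatives are bounded, because by Lemma \ref{bddw} the break value $w^+_i(\al)=u_+(t_i(\al),\al)$ is bounded together with its derivatives, and because $\dot y=O(\ep)$ keeps $y$ within $O(\ep)$ of its value at $t^*_i$ on a bounded time interval, a Gronwall/continuous-dependence argument with constants independent of $\al$ shows that this backward solution exists on $[t_{i-1}(\al)-\rho,\, t_i(\al)+\rho]$ and remains within $\rho$ of $u^+_{i-1}(t,\al)$ as soon as $|\xi_+|+|\ep|$ is small, uniformly in $\al$; at $\ep=0$, $\xi_+=0$ it coincides with $u^+_{i-1}(t,\al)$.

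The next crossing time $t^*_{i-1}$ is defined by $h(x(t),y(t))=c_{i-2}$ near $t=t_{i-1}(\al)$. The crucial fact is that at the unperturbed crossing
\[
\frac{\partial}{\partial t}\,h\big(u_+(t,\al),\al\big)\Big|_{t=t_{i-1}(\al)}=h_x\big(w^+_{i-1}(\al),\al\big)\,f_{i-1}\big(w^+_{i-1}(\al),\al\big)>2\eta,
\]
by $a_2)$, uniformly in $\al$; therefore for $|\xi_+|+|\ep|$ small the map $t\mapsto h(x(t),y(t))$ has $t$-derivative $>\eta$ on a fixed neighbourhood of the crossing, and the implicit function theorem, applied with uniform constants, produces a unique $C^r$ function $t^*_{i-1}(\xi_+,\al,\ep)$, bounded because $t_{i-1}(\al)$ is, with $t^*_{i-1}\to t_{i-1}(\al)$ as $(\xi_+,\ep)\to 0$ uniformly in $\al$. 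The inequality $\partial_t h>\eta$ at $t^*_{i-1}$ is exactly the claimed transversality estimate; it also guarantees that ${\cal S}_{i-1}(y)$ is met once and in the right direction, so the sign conditions $c_{i-2}<h<c_{i-1}$ on $(t^*_{i-1},t^*_i)$, and $h>c_{N-1}$ for $t>t^*_N$, follow by continuity from the strict inequalities $a_3)$ for $u_+(\cdot,\al)$. Concatenating the finitely many pieces — continuous at each $t^*_i$ by construction — yields $(x_+,y_+)$ on $t\ge 0$; its $C^r$ dependence on $(\xi_+,\al,\ep)$ comes from Theorem \ref{bddsol} and smooth dependence of ODE solutions on data, composed finitely many times, and \eqref{propsol-A} holds on $t\ge T_+$ by \eqref{contplus}--\eqref{contplus2} and on the bounded range $0\le t\le T_+$ by the continuous-dependence estimate above (comparing with $t\mapsto u(t,y_+(t,\ldots))$ and using equality at $\ep=0$, $\xi_+=0$). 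The construction for $t\le 0$ is identical, with $f_{j+1},w^-_j,t_j(\al),a'_2),a'_3)$ in place of $f_{i-1},w^+_i,t_i(\al),a_2),a_3)$.

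I expect the main obstacle to be uniformity with respect to the unbounded parameter $\al\in\R^m$: the implicit function theorem and the Gronwall/continuous-dependence estimates must be run with constants independent of $\al$. This is precisely what the standing boundedness of $f_\ell$, $h$ and their derivatives, Lemma \ref{bddw} (uniform bounds on the break points $w^\pm_i(y)$ and on the crossing times $t_i(y)$, the latter giving boundedness of $t^*_i$), and Lemma \ref{Onu+} (uniform bounds on $u^+_N$, $u^-_{-M}$ and their $y$-derivatives) are designed to provide, together with the uniform transversality constant $2\eta$ built into $a_2)$ and $a'_2)$. A secondary technical point is to confirm that each perturbed trajectory meets ${\cal S}_i(y)$ transversally and does not re-enter the previous domain on the interval under consideration; the strict bound $\partial_t h>\eta$ settles this.
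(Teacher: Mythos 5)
Your proposal is correct and follows essentially the same route as the paper: start from the Theorem \ref{bddsol} solution on $t\ge T_+$ (where closeness to $u^+_N$ keeps it in the outermost region), extend backward domain by domain via continuous dependence with bounds uniform in $\al$, and locate each crossing time $t^*_i$ by the implicit function theorem using the uniform transversality constant $\eta$ from $a_2)$, $a'_2)$, then concatenate the finitely many pieces. The only cosmetic difference is that you phrase the continuation as an explicit induction with Gronwall estimates on a neighbourhood of $[t_{i-1}(\al),t_i(\al)]$, whereas the paper spells out the first two steps ($t^*_N$, then $t^*_{N-1}$ on $[-1,t^*_N]$) and proceeds "in this way"; the substance is identical.
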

\begin{proof}
Suppose $t\ge 0$. Since $y^+_N(T_+,\xi_+,\al,\ep)=\al$, from \eqref{contplus} we see that
\[
	|x^+_N(T_+,\xi_+,\al,\ep)-u^+_N(T_+,\al)| \to 0
\]
as $|\xi_+|+|\ep|\to 0$ (uniformly with respect to $\al$). Hence $h(x^+_N(T_+,\xi_+,\al,\ep),\al)>c_N$, provided $|\xi_+|+|\ep|$ is sufficiently small and uniformly with respect to $\al$. Then $(x^+_N(t,\xi_+,\al,\ep),y^+_N(t,\xi_+,\al,\ep)$ can be extended to a solution of
\[\left \{\begin{array}{l}
	\dot x = f_N(x,y)		\\
	\dot y = \ep g(x,y)
\end{array}\right .\]
which is defined for $t\ge 0$ and such that
\[
	\sup_{0\le t\le T_+} |x^+_N(t,\xi_+,\al,\ep)-u^+_N(t,y^+_N(t,\xi_+,\al,\ep))| \to 0
\]
as $|\xi_+|+|\ep|\to 0$, uniformly with respect to $\al$. Note that
\[
	\sup_{0\le t\le T_+} |y^+_N(t,\xi_+,\al,\ep) - \al| \to 0
\]
as $\ep\to 0$, uniformly with respect to $(\xi_+,\al)$, since $y^+_N(T_+,\xi_+,\al,\ep) = \al$ and $\dot y^+_N(t,\xi_+,\al,\ep) = O(|\ep|)$ uniformly with respect to $(\xi_+,\al)$. Now, from $A_1)$ and the implicit function theorem it follows that there exists a $C^r$-function $t^*_N(\xi_+,\al,\ep)$, bounded together with its derivatives, such that
\[\begin{array}{l}
	|t^*_N(\xi_+,\al,\ep)-t_N(\al)|\to 0, \quad \hbox{as $|\xi_+|+|\ep|\to 0$}		\\
	h(x^+_N(t,\xi_+,\al,\ep),y^+_N(t,\xi_+,\al,\ep)) > c_N, \quad \hbox{for $t>t^*_N(\xi_+,\al,\ep)$}		\\
	h(x^+_N(t^*_N(\xi_+,\al,\ep),\xi_+,\al,\ep),y^+_N(t^*_N(\xi_+,\al,\ep),\xi_+,\al,\ep)) = c_N	\\
	\frac{\partial}{\partial t} h(x^+_N(t,\xi_+,\al,\ep),y^+_N(t,\xi_+,\al,\ep))_{|t=t^*_N(\xi_+,\al,\ep)} > \eta
\end{array}\]
provided $|\xi_+|+|\ep|$ is sufficiently small, uniformly with respect to $\al$. Next, from the continuous dependence on the data, we see that the system
\[\begin{array}{l}
	\dot x = f_{N-1}(x,y)			\\
	\dot y = \ep g(x,y,\ep)	\\
	x(t^*_N(\xi_+,\al,\ep)) = x^+_N(t^*_N(\xi_+,\al,\ep),\xi_+,\al,\ep)	\\
	y(t^*_N(\xi_+,\al,\ep)) = y^+_N(t^*_N(\xi_+,\al,\ep),\xi_+,\al,\ep)
\end{array}\]
has a unique solution $(x^+_{N-1}(t,\xi_+,\al,\ep),y^+_{N-1}(t,\xi_+,\al,\ep))$, defined for $-1\le t \le t^*_N(\xi_+,\al,\ep)$, such that
\[
	\sup_{-1\le t\le t^*_N} |x^+_{N-1}(t,\xi_+,\al,\ep)-u^+_{N-1}(t,y^+_{N-1}(t,\xi_+,\al,\ep))| \le\rho
\]
(where $ t^*_N =  t^*_N(\xi_+,\al,\ep)$) for $|\xi_+|+ |\ep|$ sufficiently small and the following holds
\[\begin{array}{l}
	\sup_{-1\le t\le t^*_N} |x^+_{N-1}(t,\xi_+,\al,\ep)-u^+_{N-1}(t,y^+_{N-1}(t,\xi_+,\al,\ep))| \to 0 	\\
	\sup_{-1\le t\le t^*_N} |y^+_{N-1}(t,\xi_+,\al,\ep)-\al|\to 0			\\
\end{array}\]
as $|\xi_+|+|\ep|\to 0$, uniformly with respect to $\al$. Now, as
\[\begin{array}{l}
	y^+_{N-1}(t,\xi_+,\al,0)=y^+_N(t^*_N(\xi_+,\al,0),\xi_+,\al,0)=\al			\\
	c_{N-2} < h(u^+_{N-1}(t,\al),\al) < c_{N-1} \hbox{ for $t_{N-1}(\al)<t<t_N(\al)$,}	\\
	\frac{\partial}{\partial t} h(u^+_N(t,\al),\al)_{|t=t_{N-1}(\al)} > 2\eta
\end{array}\]
from the implicit function theorem we see that a $C^r$-function $t^*_{N-1}(\xi_+,\al,\ep)$, bounded together with its derivatives, exists such that
\[
	\lim_{(\xi_+,\ep)\to(0,0)} t^*_{N-1}(\xi_+,\al,\ep)-t_{N-1}(\al) = 0
\]
uniformly with respect to $\al$ and the following holds:
\[\begin{array}{l}
	c_{N-2}<h(x^+_{N-1}(t,\xi_+,\al,\ep),y^+_{N-1}(t,\xi_+,\al,\ep))<c_{N-1}, 			\\
	\qquad \hbox{for $t^*_{N-1}(\xi_+,\al,\ep) \le t<t^*_N(\xi_+,\al,\ep)$}		\\
	h(x^+_{N-1}(t^*_{N-1}(\xi_+,\al,\ep),\xi_+,\al,\ep),y^+_{N-1}(t^*_{N-1}(\xi_+,\al,\ep),\xi_+,\al,\ep))=c_{N-2}		\\
	\frac{\partial|}{\partial t} h(x^+_{N-1}(t,\xi_+,\al,\ep),y^+_{N-1}(t,\xi_+,\al,\ep))_{t = t^*_{N-1}(\xi_+,\al,\ep)} > \eta.
\end{array}\]
Proceeding this way we construct the solution $(x_+(t,\xi_+,\al,\ep),y_+(t,\xi_+,\al,\ep)$ with the properties stated in the Theorem. A similar argument works for $t\le 0$. The proof is complete
\end{proof}

\medskip
\begin{remark}\label{remt*}
According to Theorem \ref{extension} we have
\begin{equation}\label{inomega0}\begin{array}{l}
	h(x_+(t^*_i(\xi_+,\al,\ep),\xi_+,\al,\ep),y_+(t^*_i(\xi_+,\al,\ep),\xi_+,\al,\ep)) = c_{i-1}	\\
	h(x_-(t^*_j(\xi_-,\al,\ep),\xi_-,\al,\ep),y^*_j(t_*(\xi_-,\al,\ep),\xi_-,\al,\ep)) = c_j.
\end{array}\end{equation}
\end{remark}
Differentiating the above equalities with respect to $\xi_+$, $\xi_-$, at $\ep=0$ we obtain a formula for the derivatives
\[
\frac{\partial t^*_i}{\partial\xi_+}(\xi_+,\al,0), \quad \frac{\partial t^*_j}{\partial\xi_-}(\xi_-,\al,0).
\]
However we have to distinguish when $t\to t^*_i(\xi_+,\al,0)^+$ or $t\to t^*_i(\xi_+,\al,0)^-$ (resp. $t\to t^*_j(\xi_-,\al,0)^+$ or
$t\to t^*_j(\xi_-,\al,0)^-$). For example if $t\to t^*_i(\xi_+,\al,0)^+$, $x_+(t,\xi_+,\al,0)$ is the solution of $\dot x = f_i(x,\al)$ and then, differentiating \eqref{inomega0} with respect to $\xi_+$, we get, with $t^*_i = t^*_i(\xi_+,\al,0)$:
\[
h_x(x_+(t^*,\xi_+,\al,0),\al)[f_i(x_+(t^*_i,\xi_+,\al,0),\al)\frac{\partial t^*_i}{\partial\xi_+}(\xi_+,\al,0) + x_{+,\xi_+}(t^{*+}_i,\xi_+,\al,0)] = 0.
\]
Vice versa, when $t\to t^*_i(\xi_+,\al,0)^-$, $x_+(t,\xi_+,\al,0)$ is the solution of $\dot x = f_{i-1}(x,\al)$ and then
\[
h_x(x_+(t^*_i,\xi_+,\al,0),\al)[f_{i-1}(x_+(t^*_i,\xi_+,\al,0),\al)\frac{\partial t^*_i}{\partial\xi_+}(\xi_+,\al,0) + x_{+,\xi_+}(t^{*-}_i,\xi_+,\al,0)] = 0.
\]
Similarly we get
\[
h_x(x_-(t^*_j,\xi_-,\al,0),\al)[f_j(x_-(t^*_j,\xi_+,\al,0),\al)\frac{\partial t^*_j}{\partial\xi_-}(\xi_-,\al,0) + x_{-,\xi_-}(t^{*-}_j,\xi_-,\al,0)] = 0
\]
and
\[
h_x(x_-(t^*_j,\xi_-,\al,0),\al)[f_{j+1}(x_+(t^*_j,\xi_-,\al,0),\al)\frac{\partial t^*_j}{\partial\xi_-}(\xi_-,\al,0) + x_{-,\xi_-}(t^{*+}_j,\xi_-,\al,0)] = 0.
\]
We will use this remark in the next section.

\bigskip

\section{The variational equation}
Let $\ell \in\{-M,\ldots,-1,1,\ldots,N\}$. For fixed $\al\in\R^m$ we define linear operators $B_\ell(\al):\R^n \to \R^n$ as follows:
\begin{equation}\label{defB*}
	B_\ell(\al)x = x - \frac{h_x(u(t_\ell(\al),\al),\al)x}{h_x(u(t_\ell(\al),\al),\al)\dot u(t_\ell(\al)^-,\al)}[\dot u(t_\ell(\al)^-,\al) - \dot u(t_\ell(\al)^+,\al)].
\end{equation}
Note that (recall that $i\in\{1,\ldots,N\}$, $j\in\{-1,\ldots,-M\}$):
\[\begin{array}{l}
	h_x(u(t_i(\al),\al),\al)\dot u(t_i(\al)^-,\al) = h_x(u(t_i(\al),\al),\al)f_{i-1}(u(t_i(\al),\al),\al) \ne 0	\\
	h_x(u(t_j(\al),\al),\al)\dot u(t_j(\al)^-,\al) = h_x(u(t_j(\al),\al),\al)f_j(u(t_j(\al),\al),\al) \ne 0
\end{array}\]
\medskip

We have the following:
\begin{prop}\label{derxwrxi} For any $\ell$ and $\al\in\R^m$, $x\mapsto B_\ell(\al)x$
%and $x\mapsto B_{*,i}(\al)x$
are invertible linear maps. Moreover $x_{+,\xi_+}(t,0,\al,0)$ is a  solution of
\begin{equation}\label{var+}\begin{array}{l}
	\dot x = A(t,\al)x := \left\{\begin{array}{ll}
		f_{i-1,x}(u(t,\al),\al)x & \hbox{if $t_{i-1}(\al)< t < t_i(\al)$}		\\
%		&\quad 	i=1,\ldots,N		\\
		f_{N,x}(u(t,\al),\al)x &\hbox{if $t > t_N(\al)$}
	\end{array}\right .	\\
	x(t_i(\al)^+) = B_i(\al)x(t_i(\al)^-), %\quad i=1,\ldots,N	
\end{array}\end{equation}
which is $C^1$ for $t\ne t_i(\al)$, bounded for $t\ge 0$ and can be assumed to be right-continuous at $t=t_i(\al)$. Similarly
$x_{-,\xi_-}(t,0,\al,0)$ is a  solution of
\begin{equation}\label{var-}\begin{array}{l}
	\dot x = A(t,\al)x := \left\{\begin{array}{ll}
		f_{j+1,x}(u(t,\al),\al)x & \hbox{if $t_j(\al)<t< t_{j+1}(\al)$}		\\
%		&\quad 	j=-1,\ldots,-M	\\
		f_{-M,x}(u(t,\al),\al)x & \hbox{if $t<t_{-M}(\al)$}
	\end{array}\right .	\\
	x(t_j(\al)^+) = B_j(\al)x(t_j(\al)^-), %\quad j=-1,\ldots,-M
\end{array}\end{equation}
which is $C^1$ for $t\ne t_j(\al)$,bounded for $t\le 0$ and can be assumed to be left-continuous at $t=t_j(\al)$.
\end{prop}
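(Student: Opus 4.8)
The plan is to prove the two assertions in order: first the invertibility of the maps $B_\ell(\al)$, then the fact that $x_{\pm,\xi_\pm}(t,0,\al,0)$ solves the stated variational problem, the jump condition included.

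For the invertibility of $B_\ell(\al)$, I would observe that $B_\ell(\al)$ is a rank-one perturbation of the identity: writing $v = \dot u(t_\ell(\al)^-,\al)-\dot u(t_\ell(\al)^+,\al)$, $\psi^T = h_x(u(t_\ell(\al),\al),\al)$ and $c = \psi^T\dot u(t_\ell(\al)^-,\al)\ne 0$ (nonzero by the displayed remark following \eqref{defB*}, which itself follows from $a_2)$--$a'_2)$), we have $B_\ell(\al)x = x - c^{-1}(\psi^T x)v = (\I - c^{-1}v\psi^T)x$. By the Sherman--Morrison formula such a map is invertible iff $1 - c^{-1}\psi^T v \ne 0$, i.e. iff $\psi^T\dot u(t_\ell(\al)^+,\al)\ne 0$; and this again holds by $a_2)$--$a'_2)$ (for $i$) and $a'_2)$ (for $j$), since those conditions give $|h_x(u(t_\ell(\al),\al),\al)\dot u(t_\ell(\al)^\pm,\al)|>2\eta$. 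Explicitly $B_\ell(\al)^{-1}x = x + (\psi^T\dot u(t_\ell(\al)^+,\al))^{-1}(\psi^T x)v$, which one checks directly.

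For the variational statement, the strategy is: differentiate, with respect to $\xi_+$ and evaluated at $(\xi_+,\ep)=(0,\al,0)$, the defining relations of $(x_+,y_+)$ from Theorem \ref{extension}. On each open interval $(t^*_{i-1},t^*_i)$ (and on $t>t^*_N$) the function $x_+(t,\xi_+,\al,\ep)$ solves $\dot x = f_{i-1}(x,y)$ (resp. $f_N$), and at $\ep=0$ we have $y_+(t,\xi_+,\al,0)\equiv\al$ and $x_+(t,0,\al,0)=u(t,\al)$. Differentiating $\dot x_+ = f_{i-1}(x_+,y_+)$ in $\xi_+$ and setting $(\xi_+,\ep)=(0,0)$ kills the $y$-derivative term (since $\partial y_+/\partial\xi_+ = 0$ at $\ep=0$), yielding $\dot{(x_{+,\xi_+})} = f_{i-1,x}(u(t,\al),\al)\,x_{+,\xi_+}$ on $(t_{i-1}(\al),t_i(\al))$, which is \eqref{var+}; the $C^1$ regularity on each open interval and boundedness on $t\ge 0$ follow from Theorem \ref{bddsol} (the $p$-th derivative bound $K'_p e^{(p+1)\beta|t-t_N(y)|}$ with $p=1$, together with $\beta<\gamma<\delta$ controlling the matching with the exponentially decaying solutions on $t\ge T_+$). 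The right-continuity convention is legitimate because $x_+$ itself is built right-continuous at the $t^*_i$.

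The one genuinely delicate point — and the main obstacle — is deriving the jump relation $x_{+,\xi_+}(t_i(\al)^+) = B_i(\al)\,x_{+,\xi_+}(t_i(\al)^-)$. Here I would use continuity of $x_+$ across the (moving) switching time $t^*_i(\xi_+,\al,0)$: from $x_+(t^*_i(\xi_+,\al,0)^-,\xi_+,\al,0) = x_+(t^*_i(\xi_+,\al,0)^+,\xi_+,\al,0)$ differentiate in $\xi_+$ using the chain rule, picking up the term $\dot u(t_i(\al)^\mp,\al)\,\partial t^*_i/\partial\xi_+$ from each side (since at $\xi_+=0$ the left/right velocities are $f_{i-1}(u(t_i(\al),\al),\al)=\dot u(t_i(\al)^-,\al)$ and $f_i(u(t_i(\al),\al),\al)=\dot u(t_i(\al)^+,\al)$); this gives
\[
x_{+,\xi_+}(t_i(\al)^-) + \dot u(t_i(\al)^-,\al)\tfrac{\partial t^*_i}{\partial\xi_+} = x_{+,\xi_+}(t_i(\al)^+) + \dot u(t_i(\al)^+,\al)\tfrac{\partial t^*_i}{\partial\xi_+}.
\]
To eliminate $\partial t^*_i/\partial\xi_+$ I would invoke the identities established in Remark \ref{remt*}: differentiating $h(x_+(t^*_i,\xi_+,\al,0),\al)=c_{i-1}$ gives, with $\psi^T=h_x(u(t_i(\al),\al),\al)$,
\[
\psi^T\!\bigl[\dot u(t_i(\al)^-,\al)\tfrac{\partial t^*_i}{\partial\xi_+} + x_{+,\xi_+}(t_i(\al)^-)\bigr]=0,
\]
so $\partial t^*_i/\partial\xi_+ = -(\psi^T\dot u(t_i(\al)^-,\al))^{-1}\psi^T x_{+,\xi_+}(t_i(\al)^-)$. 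Substituting this into the displayed continuity relation and solving for $x_{+,\xi_+}(t_i(\al)^+)$ produces exactly the formula \eqref{defB*} defining $B_i(\al)$, completing the argument for $t\ge 0$. The case $t\le 0$ is entirely analogous, with $f_{j+1}$ replacing $f_{i-1}$, left-continuity at $t_j(\al)$, and the sign conventions in $a'_2)$ handled the same way; I would simply say "by a similar argument."
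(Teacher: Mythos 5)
Your treatment of the invertibility of $B_\ell(\al)$ (rank-one perturbation of the identity, Sherman--Morrison, explicit inverse) and of the jump relation is correct and is essentially the paper's own argument: the paper derives the same relation \eqref{matrix2} by differentiating the integral form of $x_+$ on each interval between switching times and then eliminating $\partial t^*_i/\partial\xi_+$ via Remark \ref{remt*}, which is exactly what you do by differentiating the continuity of $x_+$ across the moving switching time $t^*_i(\xi_+,\al,0)$; the interior equation on each interval, the vanishing of $y_{+,\xi_+}$ at $\ep=0$, and the right-/left-continuity conventions are all handled as in the paper.

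The genuine gap is the boundedness claim. The derivative estimate of Theorem \ref{bddsol} with $p=1$ gives only $|x_{+,\xi_+}(t,0,\al,0)|\le K_1'e^{2\beta|t-t_N(\al)|}$, i.e.\ slow exponential growth, not boundedness, and the parenthetical remark that $\beta<\gamma<\delta$ ``controls the matching with the exponentially decaying solutions'' is a gesture, not an argument. What is actually needed, and what the paper proves, is that $x_{+,\xi_+}(T_+,0,\al,0)\in{\cal R}P_+(\al)$. The paper obtains this by noting that $x_{+,\xi_+}(t,0,\al,0)e^{-\beta(t-T_+)}$ is a bounded solution of the shifted system \eqref{shiftbe}, $\dot x=[f_{N,x}(u^+_N(t,\al),\al)-\beta\I]x$, which by Lemma \ref{EDshift} still has an exponential dichotomy on $t\ge T_+$ with the same projection $P_+(\al)$ and exponent $\delta-\beta>0$; boundedness of the shifted solution forces its $(\I-P_+(\al))$-component to vanish, so $x_{+,\xi_+}(T_+,0,\al,0)\in{\cal R}P_+(\al)$, and then $x_{+,\xi_+}(t,0,\al,0)$ is bounded on $t\ge T_+$ because ${\cal R}P_+(\al)$ is precisely the set of values at $T_+$ of solutions of $\dot x=f_{N,x}(u^+_N(t,\al),\al)x$ bounded on $[T_+,\infty)$; on $[0,T_+]$ boundedness is automatic by continuity, and the symmetric argument is needed for $t\le T_-$. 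Without some version of this dichotomy step your proposal establishes the variational equation and the jump condition, but not the assertion ``bounded for $t\ge 0$'' (resp.\ $t\le 0$), which is the part of the Proposition used later to show $x_{+,\xi_+}(0,0,\al,0)\xi_+=X_+(0,\al)Q_+(\al)X_+(T_+,\al)^{-1}\xi_+$ in the proof of Theorem \ref{main}.
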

\begin{proof}
First we prove that $B_\ell(\al):\R^n \to \R^n$ is invertible. If $\dot u(t_\ell(\al)^-,\al) = \dot u(t_\ell(\al)^+,\al)$, there is nothing to prove since $B_\ell(\al)x=x$. So, suppose that $\dot u(t_\ell(\al)^-,\al) \ne \dot u(t_\ell(\al)^+,\al)$ and $B_\ell(\al)x=0$. Then
\[
	x = \mu[\dot u(t_\ell(\al)^-,\al) - \dot u(t_\ell(\al)^+,\al)]	
\]
where $\mu = \frac{h_x(u(t_\ell(\al),\al),\al)x}{h_x(u(t_\ell(\al),\al),\al)\dot u(t_\ell(\al)^-,\al)}$. Hence:
\[\begin{array}{l}
	B_\ell(\al)x= \mu[\dot u(t_\ell(\al)^-,\al) - \dot u(t_\ell(\al)^+,\al)]	\\
	\qquad - \mu\frac{h_x(u(t_\ell(\al),\al),\al)[\dot u(t_\ell(\al)^-,\al) - \dot u(t_\ell(\al)^+,\al)]}{h_x(u(t_i(\al),\al),\al)\dot u(t_\ell(\al)^-,\al)}
	[\dot u(t_\ell(\al)^-,\al) - \dot u(t_\ell(\al)^+,\al)]	\\
	= \mu\frac{h_x(u(t_\ell(\al),\al),\al) \dot u(t_\ell(\al)^+,\al)}{h_x(u(t_\ell(\al),\al),\al)\dot u(t_\ell(\al)^-,\al)}[\dot u(t_\ell(\al)^-,\al) -
	\dot u(t_\ell(\al)^+,\al)].
\end{array}\]
But then $\mu=0$ since
\[\begin{array}{l}
h_x(u(t_\ell(\al),\al),\al)\dot u(t_\ell(\al)^+,\al) 	\\
	= \left \{ \begin{array}{ll}
		h_x(u(t_i(\al),\al),\al) f_i(u(t_i(\al),\al),\al) & \hbox{if $\ell=i>0$}	\\
		h_x(u(t_j(\al),\al),\al) f_{j+1}(u(t_j(\al),\al),\al) & \hbox{if $\ell=j<0$}
 	\end{array}\right . \ne 0.
\end{array}\]

Next, $x_+(t,\xi_+,\al,0)$ is a continuous, piecewise $C^r$, solution of the differential equation
\[
	\dot x  = \left\{\begin{array}{ll}
		f_{i-1}(x,\al) & \hbox{if $t^*_{i-1}(\xi_+,\al,0)< t< t^*_i(\xi_+,\al,0)$}	\\
%			& i=1,\ldots,N		\\
		f_N(x,\al) & \hbox{if $t>t^*_N(\xi_+,\al,0)$.}
	\end{array}\right .
\]
Hence, for $t^*_i(\xi_+,\al,0)<t<t^*_{i+1}(\xi_+,\al,0)$, when $i\le N-1$, or $t\ge t^*_N(\xi_+,\al,0)$ when $i=N$, we have
\[
	x_+(t,\xi_+,\al,0) = x_+(t^*_i(\xi_+,\al,0)^-,\xi_+,\al,0) + \int_{t^*_i(\xi_+,\al,0)}^t f_i(x_+(s,\xi_+,\al,0),\al)ds.
\]
Differentiating with respect to $\xi_+$ we get, for the same values of $t$:
\[\begin{array}{l}
	x_{+,\xi_+}(t,\xi_+,\al,0) = x_{+,\xi_+}(t^*_i(\xi_+,\al,0)^-,\xi_+,\al,0)		\\
	\dis [\dot x_+(t^*_i(\xi_+,\al,0)^-,\xi_+,\al,0) - \dot x_+(t^*_i(\xi_+,\al,0)^+,\xi_+,\al,0)]\frac{\partial t^*_i}{\partial\xi_+}(\xi_+,\al,0) 	\\
	\dis + \int_{t_i^*(\xi_+,\al,0)}^t f_{i,x}(x_+(s,\xi_+,\al,0),\al)x_{\xi_+}(s,\xi_+,\al,0)ds.
\end{array}\]
and then,
\begin{equation}\label{matrix2}
\begin{array}{l}
	x_{+,\xi_+}(t^*_i(\xi_+,\al,0)^+,\xi_+,\al,0) = x_{+,\xi_+}(t^*_i(\xi_+,\al,0)^-,\xi_+,\al,0) +	\\
	\dis + [f_{i-1}(\hat x^*_i(\xi_+,\al),\al) - f_i(\hat x^*_i(\xi_+,\al),\al)] \frac{\partial t^*_i}{\partial\xi_+}(\xi_+,\al,0)
\end{array}
\end{equation}
where we write for simplicity
\begin{equation}\label{defxhat}
	\hat x^*_i(\xi_+,\al) := x_+(t^*_i(\xi_+,\al,0),\xi_+,\al,0).
\end{equation}

Now, from Remark \ref{remt*} we see that
\[
	\frac{\partial t^*_i}{\partial\xi_+}(\xi_+,\al,0) =
	-\frac{h_x(\hat x^*_i(\xi_+,\al),\al)x_{+,\xi_+}(t^*(\xi_+,\al,0)^-,\al,0)}{h_x(\hat x^*_i(\xi_+,\al),\al) f_{i-1}(\hat x^*_i(\xi_+,\al),\al)}.
\]
Hence
\begin{equation}\label{jump2}
	x_{\xi_+}(t^*_i(\xi_+,\al,0)^+,\xi_+,\al,0) = B_i(\xi_+,\al)x_{\xi_+}(t^*_i(\xi_+,\al,0)^-,\xi_+,\al,0)
\end{equation}
where
\[\begin{array}{l}
	B_i(\xi_+,\al)x = 	\\
	x - [f_{i-1}(\hat x^*(\xi_+,\al),\al) - f_i(\hat x^*(\xi_+,\al),\al)]
	\frac{h_x(\hat x^*_i(\xi_+,\al),\al)x}{h_x(\hat x^*_i(\xi_+,\al),\al)f_{i-1}(\hat x^*_i(\xi_+,\al),\al)}.
\end{array}\]
Taking $\xi_+=0$, we see that $x_{\xi_+}(t,0,\al,0)$ is a solution, for $t\ge 0$ of \eqref{var+} where $B_i(\al)$ is as in \eqref{defB*}.

Following a similar argument we see that $x_{\xi_-}(t,0,\al,0)$ is a solution, for $t\le 0$ of \eqref{var-} where $B_j(\al)$ is as in \eqref{defB*}.

Finally we prove that $x_{\xi_+}(t,0,\al,0)$ is bounded for $t\ge 0$. It is enough to prove this for $t\ge T_+$. From  Theorem \ref{bddsol} we know that $x_{\xi_+}(t,0,\al,0)$ is a solution of
\[
\dot x = f_{N,x}(u_N^+(t,\al),\al)x
\]
such that
\[
	\sup_{t\ge T_+} |x(t)|e^{-\beta(t-T_+)}<\infty.
\]
So, $x_{\xi_+}(t,0,\al,0)e^{-\be(t-T_+)}$ is a bounded solution of the linear system
\begin{equation}\label{shiftbe}
\dot x = [f_{N,x}(u_N^+(t,\al),\al)-\be\I]x
\end{equation}
whose fundamental matrix on $t\ge T_+$ is $U^+_N(t,\al)U^+_N(T_+,\al)^{-1}e^{-\be (t-T_+)}$. According to Lemma \ref{EDshift} \eqref{shiftbe} has an exponential dichotomy on $t\ge T_+$ with projection $P_+(\al)$ and exponent $\de-\be$. Then we have $x_{\xi_+}(T_+,0,\al,0)\in {\cal R}P_+(\al)$. But then $x_{\xi_+}(t,0,\al,0)$ is bounded for $t\ge T_+$ because ${\cal R}P_+(\al)$ is the space of initial conditions of solutions of $\dot x = f_{N,x}(u_N^+(t,\al),\al)x$ that are bounded for $t\ge T_+$. A similar argument shows that $x_{\xi_-}(t,0,\al,0)$ is bounded for $t\le 0$. The proof is complete.
\end{proof}
In the next proposition we show that $\dot u(t,\al)$ is a nontrivial bounded solution of \eqref{var+} for $t\ge 0$ (resp. \eqref{var-} for $t\le 0$).
\begin{prop}\label{propudot} For $t\ge 0$, resp. $t\le 0$, the function
\[
	\dot u(t,\al) = \left \{
	\begin{array}{ll}
		\dot u_{i-1}^+(t,\al) & \hbox{for $t_{i-1}(\al)<t<t_i(\al)$}		\\
		\dot u^+_N(t,\al) & \hbox{for $t > t_N(\al)$,}
	\end{array}\right .
\]
resp.
\[
	\dot u(t,\al) = \left \{
	\begin{array}{ll}
		\dot u_{j+1}^-(t,\al) & \hbox{for $t_j(\al)<t<t_{j+1}\al)$}		\\
		\dot u^-_{-M}(t,\al) & \hbox{for $t < t_{-M}(\al)$,}
	\end{array}\right .
\]
is a solution of \eqref{var+} (resp. \eqref{var-}) bounded on $t\ge 0$ (resp, $t\le 0$) where $B_\ell(\al)$ is as in \eqref{defB*}.
\end{prop}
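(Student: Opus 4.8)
The plan is to show that $\dot u(t,\al)$ satisfies each of the three defining features of a solution of \eqref{var+} (resp. \eqref{var-}): (i) it solves the right linear ODE on each open subinterval $(t_{i-1}(\al),t_i(\al))$ and on $(t_N(\al),\infty)$; (ii) it obeys the jump relation $x(t_i(\al)^+)=B_i(\al)x(t_i(\al)^-)$ at the switching times; and (iii) it is bounded on $t\ge 0$. The linear‑ODE part is immediate: on an interval where $u(t,\al)=u_{i-1}^+(t,\al)$ solves $\dot x=f_{i-1}(x,\al)$, differentiating in $t$ gives $\ddot u_{i-1}^+(t,\al)=f_{i-1,x}(u_{i-1}^+(t,\al),\al)\dot u_{i-1}^+(t,\al)$, i.e. $\dot u(t,\al)$ solves $\dot x=A(t,\al)x$ there; the same holds on the last interval with $f_N$. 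For boundedness I would invoke Lemma \ref{EDshift} together with assumption $A_2)$: since $u_N^+(t,\al)\to w_+(\al)$ as $t\to\infty$ and $f_N(w_+(\al),\al)=0$, and since $\dot u_N^+(t,\al)=f_N(u_N^+(t,\al),\al)$, we get $\dot u_N^+(t,\al)\to0$; hence $\dot u(t,\al)$ is bounded on $t\ge t_N(\al)$, and being continuous (as a function with one‑sided limits) on the compact set $[0,t_N(\al)]$ it is bounded there too.

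The heart of the matter is the jump condition (ii) at $t=t_i(\al)$. Here $\dot u$ itself is discontinuous: $\dot u(t_i(\al)^-,\al)=f_{i-1}(u(t_i(\al),\al),\al)$ while $\dot u(t_i(\al)^+,\al)=f_i(u(t_i(\al),\al),\al)$. I must verify that
\[
\dot u(t_i(\al)^+,\al)=B_i(\al)\,\dot u(t_i(\al)^-,\al),
\]
with $B_i(\al)$ as in \eqref{defB*}. Writing $w=u(t_i(\al),\al)$, $v^-=\dot u(t_i(\al)^-,\al)$, $v^+=\dot u(t_i(\al)^+,\al)$, the definition gives
\[
B_i(\al)v^- = v^- - \frac{h_x(w,\al)v^-}{h_x(w,\al)v^-}\bigl(v^- - v^+\bigr) = v^- - (v^- - v^+) = v^+,
\]
because the scalar coefficient $\dfrac{h_x(w,\al)v^-}{h_x(w,\al)v^-}$ equals $1$ (the denominator $h_x(w,\al)v^- = h_x(w,\al)f_{i-1}(w,\al)$ is nonzero by the transversality noted right after \eqref{defB*}). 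So the jump relation holds essentially by inspection. The main obstacle, such as it is, is bookkeeping: one must check that $\dot u(t,\al)$ as defined is indeed the $t$‑derivative of the piecewise solution $u(t,\al)$ with the stated one‑sided values at the $t_i(\al)$, and that the same computation with $f_j,f_{j+1}$ in place of $f_{i-1},f_i$ covers the $t\le 0$ case for \eqref{var-} — which it does verbatim, using $h_x(u(t_j(\al),\al),\al)f_j(u(t_j(\al),\al),\al)\ne0$.

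Finally I would remark on nontriviality: $\dot u(t,\al)\not\equiv0$ because, e.g., $h_x(w,\al)\dot u(t_i(\al)^\pm,\al)>2\eta>0$ by $a_2)$–$a'_2)$, so $\dot u$ cannot vanish identically. This completes the verification that $\dot u(t,\al)$ is a nontrivial bounded solution of \eqref{var+} (resp. \eqref{var-}) with the jump operators $B_\ell(\al)$ of \eqref{defB*}.
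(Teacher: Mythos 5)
Your proposal is correct and follows essentially the same route as the paper: the decisive step is the identical computation that $B_\ell(\al)\dot u(t_\ell(\al)^-,\al)=\dot u(t_\ell(\al)^+,\al)$ because the scalar coefficient $\frac{h_x(w,\al)\dot u(t_\ell^-,\al)}{h_x(w,\al)\dot u(t_\ell^-,\al)}$ equals $1$, the denominator being nonzero by transversality. The paper simply treats the off-jump ODE verification and boundedness (which follows at once from the boundedness of the $f_\ell$) as known, whereas you spell them out; this is only a difference in detail, not in method.
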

\begin{proof} We already know that $\dot u(t)$ satisfies \eqref{var+}, for $t\ge 0$ and \eqref{var-} for $t\le 0$,
$t\ne t_\ell$. We prove that $\dot u(t_\ell(\al)^+,\al) = B_\ell(\al)\dot u(t_\ell(\al)^-,\al)$. We have
\[\begin{array}{l}
	B_\ell(\al)\dot u(t_\ell(\al)^-,\al) = \dot u(t_\ell(\al)^-,\al)		\\
	\qquad - \frac{h_x(u(t_\ell(\al),\al),\al)\dot u(t_\ell(\al)^-,\al)}{h_x(u(t_\ell(\al),\al),\al)\dot u(t_\ell(\al)^-,\al)}
	[\dot u(t_\ell(\al)^-,\al) - \dot u(t_\ell(\al)^+,\al)]		\\
	= \dot u(t_\ell(\al)^+,\al)
\end{array}\]
The proof is complete.
\end{proof}
\section{The Melnikov condition}
First we recall that $P_+(y)$ is the projections of the exponential dichotomy on $t\ge 0$, of the linear system \eqref{variat+} with constant $K$ and exponent $\delta$. Then, from Lemma \ref{EDshift}, we see that \eqref{variat1} has an exponential dichotomy on $t\ge t_N(y)$ with exponent $\de$ and projection
\begin{equation}\label{pro+}\begin{array}{c}
	U^+_N(t_N(y),y)U^+_N(T_+,y)^{-1}P_+(y)U^+_N(T_+,y)U^+_N(t_N(y),y)^{-1}	\\
	= X_+(t_N(y),y)X_+(T_+,y)^{-1}P_+(y)X_+(T_+,y)X_+(t_N(y),y)^{-1}
\end{array}\end{equation}
the equality following from \eqref{XU+} and $T_+>t_N(y)$.

Similarly, the linear system \eqref{variat2} has an exponential dichotomy on $t\le t_{-M}(y)$ with exponent $\delta$ and projection
\begin{equation}\label{pro-}\begin{array}{c}
	U^-_{-M}(t_{-M}(y),y)U^-_{-M}(T_-,y)^{-1}P_-(y)U^-_{-M}(T_-,y)U^-_{-M}(t_{-M}(y),y)^{-1}	\\
	= X_-(t_{-M}(y),y)X_-(T_-,y)^{-1}P_-(y)X_-(T_-,y)X_-(t_{-M}(y),y)^{-1}
\end{array}\end{equation}
where
\[
X_+(t,y) = \left \{\begin{array}{ll}
	U_0(t,y)	& \hbox{if $0\le t< t_1(y)$}	\\
	U_i(t,y)U_i(t_i,y)^{-1}B_i(y)X_+(t_i^-,y)	& \hbox{if $t_i(y)\le t< t_{i+1}(y)$}	\\
	U_N(t,y)U_N(t_N,y)^{-1}B_N(y)X_+(t_N^-,y) & \hbox{if $t\ge t_N(y)$}
\end{array}\right .
\]
and
\[
X_-(t,y) = \left \{\begin{array}{ll}
	U_0(t,y) & \hbox{if $t_{-1}(y)<t\le 0$}		\\
	U_j(t,y)U_j(t_j,y)^{-1}B_j^{-1}(y)X_-(t_j^+,y) & \hbox{if $t_{j-1}(y)<t\le t_j(y)$}	\\
	U_{-M}(t,y)U_{-M}(t_{-M},y)^{-1}B_{-M}(y)^{-1}X_-(t_{-M}^+,y) & \hbox{if $t\le t_{-M}(y)$}
\end{array}\right .
\]
are the fundamental matrix of $\dot x = A(t,y)x$, where $A(t,y)$ is as in \eqref{defA}.

From Lemma \ref{extend}--\ref{bddlem} and \eqref{pro+}-\eqref{pro-} we obtain the following
\begin{prop}\label{basicprop} For any $\al\in\R^m$, the discontinuous linear system \eqref{var+} (resp. \eqref{var-})
has an exponential dichotomy on $\R_+$, (resp. $\R_-$) with projections $Q_+(\al)$, resp. $Q_-(\al)$, given by
\[\begin{array}{l}
	Q_+(\al) = X_+(T_+,\al)^{-1} P_+(\al)X_+(T_+,\al)	\\	\\
	Q_-(\al) = 	X_-(T_-,\al)^{-1}P_-(\al)X_-(T_-,\al).
\end{array}\]
Moreover ${\cal R}Q_+(\al)$ (resp. ${\cal N}Q_-(\al)$) is the space of initial conditions of solutions of \eqref{var+}, resp. \eqref{var-},
right-continuous, when $t\ge 0$ (resp. left-continuous, when $t\le 0$) and bounded on $\R_+$, (resp, on $\R_-$).
\end{prop}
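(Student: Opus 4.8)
The plan is to observe that \eqref{var+} (resp.\ \eqref{var-}) is exactly a discontinuous linear system of the type \eqref{possys} (resp.\ \eqref{negsys}) studied in Section 3, and then to chain together Lemmas \ref{EDshift}, \ref{extend} and \ref{bddlem}. First I would fix the data of the abstract system: set $A_i(t):=f_{i,x}(u_i^+(t,\al),\al)$ for $i=0,\dots,N$, observing that each $u_i^+(\cdot,\al)$ is globally defined because $f_i$ has bounded derivatives and is therefore globally Lipschitz in $x$, so the $A_i(t)$ are continuous bounded matrices on $\R$; take $t_i=t_i(\al)$ with $t_0=0$; and take as jump matrices $B_i=B_i(\al)$, which are invertible by Proposition \ref{derxwrxi}. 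With these choices the matrix ${\cal A}(t)$ of \eqref{Ascr} coincides with $A(t,\al)$ of \eqref{var+}, and the fundamental matrix of \eqref{var+} is precisely the $X_+(\cdot,\al)$ displayed just before the statement, that is \eqref{Xplus} with $U_i=U_i^+(\cdot,\al)$, $U_i^+(0,\al)=\I$, and jumps $B_i(\al)$.

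Next I would apply Lemma \ref{EDshift} with $\tau=t_N(\al)$: since $P_+(\al)$ is the projection of the exponential dichotomy of \eqref{variat+} on $\R_+$, the frozen tail equation $\dot x=A_N(t)x$, which is \eqref{variat1}, has an exponential dichotomy on $t\ge t_N(\al)$ with exponent $\delta$, a constant independent of $\al$, and projection given by \eqref{pro+}; by \eqref{XU+} this projection equals $X_+(t_N(\al),\al)X_+(T_+,\al)^{-1}P_+(\al)X_+(T_+,\al)X_+(t_N(\al),\al)^{-1}$. Then Lemma \ref{extend} applies directly and yields an exponential dichotomy for \eqref{var+} on $\R_+$ with the same exponent $\delta$, a constant $\tilde K\ge K$, and projection $\tilde{\cal P}_+=X_+(t_N(\al),\al)^{-1}{\cal P}_+X_+(t_N(\al),\al)$; substituting the above expression for ${\cal P}_+$, the conjugating factors telescope and leave exactly $Q_+(\al)=X_+(T_+,\al)^{-1}P_+(\al)X_+(T_+,\al)$. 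Finally, Lemma \ref{bddlem} applied to \eqref{var+} identifies ${\cal R}Q_+(\al)={\cal R}\tilde{\cal P}_+$ with the set of initial values $x(0)$ of solutions of \eqref{var+} that stay bounded on $\R_+$, the relevant solution being the right-continuous one as in Remark \ref{rem3.3}. The statements for \eqref{var-} follow from the mirror-image argument --- Lemma \ref{EDshift} with $\tau=t_{-M}(\al)$, Lemma \ref{extend} for \eqref{negsys}, and the second half of Lemma \ref{bddlem} --- giving $Q_-(\al)=X_-(T_-,\al)^{-1}P_-(\al)X_-(T_-,\al)$ and identifying ${\cal N}Q_-(\al)$ with the initial data of solutions bounded on $\R_-$.

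The one genuinely delicate point, which I would take most care over, is Step 1: checking that \eqref{var+}, with the jump operators $B_i(\al)$ that arose in Section 5 from differentiating the switching times, really is an instance of \eqref{possys} in the precise form required by Lemmas \ref{extend} and \ref{bddlem}. This means matching the right-continuity conventions at the $t_i(\al)$, accounting for the harmless double definition of ${\cal A}(t)$ at the breakpoints noted in Remark \ref{onA}-i), and confirming that $X_+(\cdot,\al)$ is indeed the fundamental matrix normalised by $X_+(0,\al)=\I$ and built from the $U_i^+(\cdot,\al)$, so that it agrees with \eqref{Xplus}. Once that identification is in place, everything else is a sequence of citations together with the elementary telescoping of conjugations, and no new estimates are needed.
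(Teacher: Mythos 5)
Your proposal is correct and follows essentially the same route as the paper: the paper obtains the proposition exactly by combining Lemma \ref{EDshift} (giving the dichotomy of \eqref{variat1}, \eqref{variat2} on $t\ge t_N(\al)$, $t\le t_{-M}(\al)$ with the projections \eqref{pro+}--\eqref{pro-}, via \eqref{XU+}--\eqref{XU-}) with Lemmas \ref{extend} and \ref{bddlem}, the conjugations telescoping to $Q_\pm(\al)$ just as you describe. Your extra care in matching \eqref{var+}--\eqref{var-} with the abstract setting \eqref{possys}--\eqref{negsys} (invertibility of $B_\ell(\al)$ from Proposition \ref{derxwrxi}, right/left-continuity conventions) is exactly the implicit identification the paper relies on.
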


For simplicity we write $Q_\pm = Q_\pm(y_0)$.

We assume the following condition holds:
\begin{itemize}
	\item[$A_5)$] $\dim {\cal R}Q_+ \cap {\cal N}Q_- = d\le m$.
\end{itemize}
From Proposition \ref{propudot} we see that $\dot u(0,y_0)\in{\cal R}Q_+\cap{\cal N}Q_-$ so
\[
1\le \dim [{\cal R}Q_+ + {\cal N}Q_-]^\perp = d.
\]
Next, from $A_3)$ we know that $\dim{\cal R}Q_+ = k$ and $\dim{\cal N}Q_-=n-k$, hence $d\le \min\{k,n-k\}$.

Let $\psi_1,\ldots,\psi_d\in\R^n$ be such that $[{\cal R}Q_+ + {\cal N}Q_-]^\perp =\span\{\psi_1,\ldots,\psi_d\}$. Without loss of generality we assume that $(\psi_1,\ldots,\psi_d)$ is an orthonormal set.

The purpose of this section is the to prove the following
\begin{theorem}\label{main}
Suppose that $A_1)-A_5)$ hold. Suppose further that the matrix $[\psi_j^T[w^-_{0,y}(y_0)- w^+_{0,y}(y_0)]_{j=1,\ldots,d}$ has rank $d$.
Then there  exists $\rho>0$ and $\ep_0>0$ such that for $0\le \ep\le\ep_0$ system \eqref{pert} has a $(m-d)$-dimensional manifold of continuous,
piecewise $C^r$ solutions $(x(t,\ep),y(t,\ep))$ such that
\[\begin{array}{l}
	\sup_{t\in\R}|x(t,\ep) - u(t,y(t,\ep)) <\rho,	\\
	\dis \sup_{t\in\R}|x(t,\ep) - u(t,y(t,\ep))| \to 0
\end{array}\]
as $\ep\to 0$.
\end{theorem}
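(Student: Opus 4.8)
The plan is to carry out a Lyapunov--Schmidt reduction on the joining condition at $t=0$. By Theorem \ref{extension} we have, for $t\ge 0$, a family of continuous piecewise $C^r$ solutions $(x_+(t,\xi_+,\al,\ep),y_+(t,\xi_+,\al,\ep))$ of \eqref{pert} parametrised by $\xi_+\in{\cal R}P_+(\al)$ and $\al\in\R^m$ (with $|\xi_+|,|\al-y_0|,|\ep|$ small), and a corresponding family $(x_-(t,\xi_-,\al,\ep),y_-(t,\xi_-,\al,\ep))$ for $t\le 0$ parametrised by $\xi_-\in{\cal N}P_-(\al)$. A genuine solution of \eqref{pert} on all of $\R$ is obtained precisely when the two pieces match at $t=0$, i.e. when
\[
	x_+(0,\xi_+,\al,\ep) = x_-(0,\xi_-,\al,\ep), \qquad y_+(0,\xi_+,\al,\ep) = y_-(0,\xi_-,\al,\ep).
\]
The $y$-equation is handled first: since $\dot y = O(\ep)$ along both families and $y_\pm(T_\pm,\al,\ep)=\al$, the two $y$-components differ by $O(\ep)$ and, after possibly adjusting one of the parameters (shifting $\al$ versus the two half-line values of $\al$), the $y$-matching can be solved by the implicit function theorem, leaving the scalar-vector equation in $x$. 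One then writes $G(\xi_+,\xi_-,\al,\ep) := x_+(0,\xi_+,\al,\ep) - x_-(0,\xi_-,\al,\ep) \in\R^n$ and seeks its zeros.

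\textbf{The bifurcation equation.} At $\ep=0$ and $\xi_\pm=0$ we have $x_\pm(0,0,y_0,0) = w_0^\pm(y_0) = x_0$ by $A_4)$, so $G(0,0,y_0,0)=0$. To solve $G=0$ near this point I split $\R^n = [{\cal R}Q_+ + {\cal N}Q_-] \oplus \Span\{\psi_1,\dots,\psi_d\}$, using $A_5)$. Projecting $G$ onto ${\cal R}Q_+ + {\cal N}Q_-$ and applying the implicit function theorem eliminates the ``complementary'' directions of $(\xi_+,\xi_-)$: the key is that the partial derivatives $\partial G/\partial\xi_+ = x_{+,\xi_+}(0,0,y_0,0)$ and $-\partial G/\partial\xi_- = -x_{-,\xi_-}(0,0,y_0,0)$, restricted to ${\cal R}P_+(y_0)$ and ${\cal N}P_-(y_0)$ respectively, have ranges ${\cal R}Q_+$ and ${\cal N}Q_-$ by Propositions \ref{derxwrxi} and \ref{basicprop} (the bounded solutions of the variational equations \eqref{var+}, \eqref{var-} start exactly in ${\cal R}Q_+$, resp.\ ${\cal N}Q_-$, and these maps are injective on those complements modulo the one common direction $\dot u(0,y_0)$ of Proposition \ref{propudot}). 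What remains is the reduced $d$-dimensional bifurcation equation
\[
	\psi_l^T\, G(\xi_+(\al,\ep),\xi_-(\al,\ep),\al,\ep) = 0, \qquad l=1,\dots,d,
\]
as a function of $\al\in\R^m$ and $\ep$.

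\textbf{The Melnikov computation.} I expand the reduced function to first order. At $\ep=0$ the reduced equation reads $\psi_l^T[w_0^-(\al) - w_0^+(\al)] + (\text{terms that vanish because }\psi_l\perp{\cal R}Q_++{\cal N}Q_-) = 0$; the contributions of the $\xi_\pm$-corrections drop out precisely because $\psi_l$ annihilates ${\cal R}Q_+$ and ${\cal N}Q_-$. So the reduced bifurcation function at $\ep=0$ is $\Phi_l(\al) := \psi_l^T[w_0^-(\al) - w_0^+(\al)]$, which vanishes at $\al=y_0$ by $A_4)$, with Jacobian with respect to $\al$ equal to $\psi_l^T[w^-_{0,y}(y_0) - w^+_{0,y}(y_0)]$ --- exactly the $d\times m$ matrix assumed to have rank $d$ in the hypothesis. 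Since this matrix has full row rank $d\le m$, the equation $\Phi(\al)=0$ defines, near $y_0$, an $(m-d)$-dimensional submanifold of $\R^m$, and by the implicit function theorem (applied to the $\R^d$-valued reduced equation as a function of $(\al,\ep)$, splitting $\al$ into $d$ ``active'' coordinates and $m-d$ ``free'' ones) this persists to an $(m-d)$-dimensional manifold of solutions for each small $\ep\ge 0$. Feeding these parameter values back into Theorem \ref{extension} yields the $(m-d)$-parameter family of solutions $(x(t,\ep),y(t,\ep))$; the estimates $\sup_t|x(t,\ep)-u(t,y(t,\ep))|<\rho$ and $\to 0$ as $\ep\to 0$ follow directly from \eqref{propsol-A} together with the fact that the solved-for $\xi_\pm(\al,\ep)$ tend to $0$ as $\ep\to 0$.

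\textbf{Main obstacle.} The delicate point is the bookkeeping of the Lyapunov--Schmidt splitting: one must verify that the linearisation of $G$ at the base point, as a map from (a complement of the kernel in) the parameter space ${\cal R}P_+(y_0)\times{\cal N}P_-(y_0)\times\R^m$ into ${\cal R}Q_++{\cal N}Q_-$, is onto with kernel of dimension exactly $m$ (accounting for the $m$-dimensional $\al$-freedom and the single common direction $\dot u(0,y_0)$). This is where the exact description of ${\cal R}Q_\pm$ from Proposition \ref{basicprop}, the invertibility of the jump operators $B_\ell(\al)$ from Proposition \ref{derxwrxi}, and the transversality built into \eqref{commP} (which makes the projections smooth in $\al$) all have to be combined carefully; handling the jump terms $x_{\pm,\xi_\pm}(t_\ell^\pm,\cdot)$ correctly --- using Remark \ref{remt*} for the derivatives of the switching times $t^*_\ell$ --- is the part most prone to sign and indexing errors.
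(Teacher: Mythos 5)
Your proposal follows essentially the same route as the paper's proof: matching the two half-line families at $t=0$, a Lyapunov--Schmidt splitting $\R^n=[{\cal R}Q_++{\cal N}Q_-]\oplus\Span\{\psi_1,\ldots,\psi_d\}$, elimination of $(\xi_+,\xi_-)$ (and of one of the two half-line values of $\al$, to enforce the $y$-matching) by the implicit function theorem, and then the reduced $d$-dimensional equation whose $\al$-Jacobian at $(y_0,0)$ is the rank-$d$ Melnikov matrix, exactly as in the paper. Two minor points to tighten, neither a real gap: the kernel of the $(\xi_+,\xi_-)$-linearisation is the $d$-dimensional diagonal copy of ${\cal R}Q_+\cap{\cal N}Q_-$ (not just the single direction $\dot u(0,y_0)$ unless $d=1$), and the $\xi_\pm$-corrections are annihilated by $\psi_l$ only up to $O(|\al-y_0|^2+|\ep|)$ because $Q_\pm(\al)\ne Q_\pm$ for $\al\ne y_0$ --- which, as in the paper's estimate of the remainder, is still harmless for computing the Jacobian at $(y_0,0)$.
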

\begin{proof} First, we apply Lemma \ref{prelem} to obtain another expression of $x_+(t,\xi_+,\al,\ep)$ (resp. $x_-(t,\xi_-,\al,\ep)$).  We know that, for $t\ge 0$,
\[
	z_+(t) = x_+(t,\xi_+,\al,\ep) - u(t,y_+(t,\xi_+,\al,\ep))
\]
is a bounded and continuous solution of the differential equation
\[\begin{array}{l}
	\dot z = f(x_+(t,\xi_+,\al,\ep),y_+(t,\xi_+,\al,\ep)) - f(u(t,y_+(t,\xi_+,\al,\ep)),y_+(t,\xi_+,\al,\ep)) 	\\
	\hskip 20pt - \ep u_y(t,y_+(t,\xi_+,\al,\ep))g(x_+(t,\xi_+,\al,\ep),y_+(t,\xi_+,\al,\ep))
\end{array}\]
that we write:
\[\left\{\begin{array}{l}
	\dot z - A(t,\al)z = b_+(t) 	\\
	z(t_i(\al)^+) = z(t_i(\al)^-), \quad i=1,\ldots ,N
\end{array}\right .\]
where $A(t,y)$ has been defined in \eqref{defA} and
\begin{equation}\label{defbt}\begin{array}{l}
	b_+(t) = f(x_+(t,\xi_+,\al,\ep),y_+(t,\xi_+,\al,\ep)) - f(u(t,y_+(t,\xi_+,\al,\ep)),y_+(t,\xi_+,\al,\ep)) 	\\
	\hskip 20pt  - A(t,\al)x_+(t,\xi_+,\al,\ep) - \ep u_y(t,y_+(t,\xi_+,\al,\ep))g(x_+(t,\xi_+,\al,\ep),y_+(t,\xi_+,\al,\ep)).
\end{array}\end{equation}
Note that
\[
(u(t,y_+(t,\xi_+,\al,0)),y_+(t,\xi_+,\al,0)) = (u(t,\al),\al)
\]
and, according to $a_2)$,
\[\begin{array}{l}
	h_x(u(t_i(\al),\al) f_{i-1}(u(t_i(\al),\al),\al) > 2\eta , 	\\
	h_x(u(t_i(\al),\al) f_i((u(t_i(\al),\al) > 2\eta.
\end{array}\]
Hence, for $\ep$ sufficiently small there exist $C^r$-functions $\tilde t_i(\xi_+,\al,\ep)$ such that
\[\begin{array}{l}
	|\tilde t_i(\xi_+,\al,\ep)-t_i(\al)|\to 0, \hbox{ as $\ep\to 0$}	\\
	h(u(\tilde t_i(\xi_+,\al,\ep),y_+(\tilde t_i(\xi_+,\al,\ep),\xi_+,\al,\ep)),y_+(\tilde t_i(\xi_+,\al,\ep),\xi_+,\al,\ep)) = c_i	\\
	c_{i-1}< h(u(t,y_+(t,\xi_+,\al,\ep)),y_+(t,\xi_+,\al,\ep))<c_i, \\
	\qquad \hbox{ for $\tilde t_{i-1}(\xi_+,\al,\ep)<t<\tilde t_i(\xi_+,\al,\ep)$} 	\\
	h_x(x,y)f_{i-1}(x,y)_{\vrule_{x=u(\tilde t_i(\xi_+,\al,\ep),y_+(\tilde t_i(\xi_+,\al,\ep),\xi_+,\al,\ep))\atop
	y = y_+(\tilde t_i(\xi_+,\al,\ep)\hfill }} >\eta 	\\
	h_x(x,y)f_i(x,y)_{\vrule_{x=u(\tilde t_i(\xi_+,\al,\ep), y_+(\tilde t_i(\xi_+,\al,\ep),\xi_+,\al,\ep))\atop
	y = y_+(\tilde t_i(\xi_+,\al,\ep)\hfill }} >\eta
\end{array}\]
for any $i=1,\ldots,N$ and uniformly with respect to $(\xi_+,\al)$. Then
\[\begin{array}{l}
	f(x_+(t,\xi_+,\al,\ep),y_+(t,\xi_+,\al,\ep)) 	\\
	= \left \{\begin{array}{ll}
		f_{i-1}(x_+(t,\xi_+,\al,\ep),y_+(t,\xi_+,\al,\ep)) & \hbox{if $t^*_{i-1}(\xi_+,\al,\ep)\le t<t^*_i(\xi_+,\al,\ep)$}	\\
		f_N(x_+(t,\xi_+,\al,\ep),y_+(t,\xi_+,\al,\ep)) & \hbox{if $t\ge t^*_N(\xi_+,\al,\ep)$}
	\end{array}\right .
\end{array}\]
and
\[\begin{array}{l}
	f(u(t,\xi_+,\al,\ep),y_+(t,\xi_+,\al,\ep)) 	\\
	= \left \{\begin{array}{ll}
		f_{i-1}(u(t,\xi_+,\al,\ep),y_+(t,\xi_+,\al,\ep))	& \hbox{if $\tilde t_{i-1}(\xi_+,\al,\ep)\le t<\tilde t_i(\xi_+,\al,\ep)$}	\\
		f_N(u(t,\xi_+,\al,\ep),y_+(t,\xi_+,\al,\ep))	& \hbox{if $t\ge \tilde t_N(\xi_+,\al,\ep)$.}
	\end{array}\right .
\end{array}\]
According to Lemma \ref{prelem}, with $\tau=0$, we see that
\begin{equation}\label{express1}
	\begin{array}{l}
		x_+(t,\xi_+,\al,\ep) = u(t,y_+(t,\xi_+,\al,\ep)) + X_+(t,\al)\tilde\xi_+ 	\\
		\qquad \dis + \int_0^t X_+(t,\al)Q_+(\al)X_+(s,\al)^{-1}b_+(s)ds 		\\
		\qquad \dis - \int_t^\infty X_+(t,\al)(\I-Q_+(\al))X_+(s,\al)^{-1}b_+(s)ds
	\end{array}
\end{equation}
where
\[
\tilde\xi_+ = Q_+(\al)[x_+(0,\xi_+,\al,\ep) - u(0^+,y_+(0,\xi_+,\al,\ep)) ] \in {\cal R}Q_+(\al).
\]
Note that
\[
	|y_+(0,\xi_+,\al,\ep) -\al| \le |\ep| \int_0^{T_+} g(x_+(t,\xi_+,\al,\ep),y_+(t,\xi_+,\al,\ep),\ep)|dt \to 0
\]
as $\ep\to 0$, uniformly with respect to $(\xi_+,\al)$. We prove the following
\medskip

\noindent {\bf Claim}: For $\ep$ sufficiently small, the map $(\xi_+,\al)\mapsto (\tilde\xi_+,\tilde\al)$, with $\tilde\al=y_+(0,\xi_+,\al,\ep)$ from
${\cal R}P_+(\al)\times\R^m$ into ${\cal R}Q_+(\al)\times\R^m$ is linearly invertible.
\medskip

Indeed, for $\ep=0$ the above map reduces to
\[
	(\xi_+,\al)\mapsto (\tilde\xi_+,\al)
\]
where
\[
	\tilde\xi_+ = Q_+(\al)[x_+(0,\xi_+,\al,0) - u(0^+,\al)].
\]
Hence
\[
	\frac{\partial\tilde\xi_+}{\partial\xi_+}(0,\al,0) = Q_+(\al)x_{+,\xi_+}(0,0,\al,0).
\]
Now, from Proposition \ref{derxwrxi} we know that, for any $\xi_+\in {\cal R}P_+(\al)$,
\[
z(t):=x_{+,\xi_+}(t,0,\al,0)\xi_+
\]
is a right-continuous solution, bounded on $t\ge 0$, of
\[\begin{array}{l}
	\dot z(t) = A(t,\al)z(t)		\\
	z(t_i(\al)^+) = B_i(\al)z(t_i^-(\al))	\\
	P_+(\al)z(T_+)=\xi_+
\end{array}\]
the last relation following differentiating the equality
\[
	P_+(\al)[x_+(T_+,\xi_+,\al,0)-u(T_+,\al)]=\xi_+
\]
with respect to $\xi_+$ at $\xi_+=0$ (see Theorem \eqref{bddsol}). In particular we have $z(t) = X_+(t,\al)X_+(\tau,\al)^{-1}z(\tau)$ for any $\tau\ge 0$ (see Remark \ref{rem3.3}). From Lemma \ref{EDshift} it follows that the linear system \eqref{variat1}, with $y=\al$, has an exponential dichotomy on $t\ge t_N(\al)$ with projection (see also \eqref{XU+})
\[\begin{array}{l}
	{\cal P}_+(\al) = U^+_N(t_N(\al),\al)U^+_N(T_+,\al)^{-1}P_+(\al)U^+_N(T_+,\al)U^+_N(t_N(\al),\al)^{-1}	\\
	= X_+(t_N(\al),\al)X_+(T_+,\al)^{-1}P_+(\al)X_+(T_+,\al)X_+(t_N(\al),\al)^{-1}
\end{array}\]
as $T_+<t_N(y)$. Hence
\[\begin{array}{l}
	X_+(T_+,\al)X_+(t_N(\al),\al)^{-1}{\cal P}_+(\al) = P_+(\al)X_+(T_+,\al)X_+(t_N(\al),\al)^{-1}.
\end{array}\]
As a consequence
\[\begin{array}{l}
	X_+(T_+,\al)X_+(t_N(\al),\al)^{-1}{\cal P}_+(\al)z(t_N(\al))		\\
	=  P_+(\al)X_+(T_+,\al)X_+(t_N(\al),\al)^{-1}z(t_N(\al)) = P_+(\al)z(T_+) = \xi_+ .
\end{array}\]
So, $z(t):=x_{+,\xi_+}(t,0,\al,0)\xi_+$ is a right-continuous solution, bounded on $t\ge 0$
\[\begin{array}{l}
	\dot z(t) = A(t,\al)z(t)		\\
	z(t_i(\al)^+) = B_i(\al)z(t_i^-(\al))	\\
	{\cal P}_+(\al)z(t_N(\al)) = X_+(t_N(\al),\al)X_+(T_+,\al)^{-1}\xi_+
\end{array}\]
for any $\xi_+\in {\cal R}P_+(\al)$. Now, with reference to Lemmas \ref{extend}, \ref{prelem} with $\tau=t_N(\al)$, we have
\[\begin{array}{l}
	\tilde {\cal P}_+^{t_N(\al)}(\al) = X_+(t_N(\al),\al)Q_+(\al) X_+(t_N(\al),\al)^{-1} 	\\
	= X_+(t_N(\al),\al)X_+(T_+,\al)^{-1} P_+(\al)X_+(T_+,\al)X_+(t_N(\al),\al)^{-1}	\\
	= {\cal P}_+(\al).
\end{array}\]
Hence $x_{+,\xi_+}(t,0,y_0,0)$ is a bounded solution of
\[\begin{array}{l}
	\dot z(t) = A(t,y_0)z(t)		\\
	z(t_i(\al)^+) = B_i(\al)z(t_i^-(\al))			\\
	\tilde {\cal P}_+^{t_N(\al)}(\al)z(t_N(\al)) = X_+(t_N(\al),\al)X_+(T_+,\al)^{-1}\xi_+.
\end{array}\]
From Lemma \ref{prelem}, equation \eqref{xpos}, we get then, for any $\xi_+\in{\cal R}P_+(\al)$, using again the right-continuity of $X_+(t,\al)$:
\[\begin{array}{l}
	\dis x_{+,\xi_+}(t,0,\al,0)\xi_+ = X_+(t,\al)Q_+(\al)X_+(t_N(\al),\al)^{-1}X_+(t_N(\al),\al)X_+(T_+,\al)^{-1}\xi_+	\\
	\dis = X_+(t,\al)Q_+(\al)X_+(T_+,\al)^{-1}\xi_+
\end{array}\]

So, %using $U^+_N(T_+,\al)U^+_N(t_N(\al),\al)^{-1} = X_+(T_+,\al)X_+(t_N(\al),\al)^{-1}$:
\[
	\frac{\partial\tilde\xi_+}{\partial\xi_+}(0,\al,0) = Q_+(\al)x_{+,\xi_+}(0,0,\al,0) = Q_+(\al)X_+(T_+,\al)^{-1}\xi_+
	= X_+(T_+,\al)^{-1} P_+(\al)\xi_+ .
\]
Hence $\frac{\partial\tilde\xi_+}{\partial\xi_+}(0,\al,0)$ is an isomorphism from ${\cal R}P_+(\al)$ into ${\cal R}Q_+(\al)$ and the Claim is proved.
\medskip

Similarly we see that
\begin{equation}\label{express2}
	\begin{array}{l}
		x_-(t,\xi_-,\al,\ep) = u(t,y_-(t,\xi_-,\al_-,\ep)) + X_-(t,\al)\tilde\xi_- 	\\
		\quad \dis + \int_{-\infty}^t X_-(t,\al)Q_-(\al)X_-(s,\al)^{-1}b_-(s)ds 	\\
		\quad \dis - \int_t^0 X_-(t,\al)(\I-Q_-(\al))X_-(s,\al)^{-1}b_-(s)ds
	\end{array}
\end{equation}
where
\[\begin{array}{l}
	b_-(t) = f(x_-(t,\xi_-,\al,\ep),y_-(t,\xi_-,\al,\ep)) - f(u_0(t,y_-(t,\xi_-,\al,\ep)),y_-(t,\xi_-,\al,\ep)) 	\\
	\hskip 20pt  - {\cal A}(t,\al)x_-(t,\xi_-,\al,\ep) - \ep u_{0,y}(t,y_-(t,\xi_-,\al,\ep))g(x_-(t,\xi_-,\al,\ep),y_-(t,\xi_-,\al,\ep))
\end{array}\]
and
\[
\tilde\xi_- = [\I-Q_-(\al)][x_-(0,\xi_-,\al,\ep) - u_0(0^-,y_-(0,\xi_-,\al,\ep)) ] \in {\cal N}Q_-(\al).
\]
Again we see that
\[
	|y_-(0,\xi_-,\al,\ep) -\al| \le |\ep| \int_0^1 g(x_-(t,\xi,\al_-,\ep),y_-(t,\xi_-,\al,\ep),\ep)|dt \to 0
\]
as $\ep\to 0$, uniformly with respect to $(\xi_-,\al)$ and the map $(\xi_-,\al)\mapsto (\tilde\xi_-,y_-(0,\xi_-,\al,\ep))$
from ${\cal N}P_-(\al)\times\R^m$ into ${\cal N}Q_-(\al)\times\R^m$ is linearly invertible.
\medskip

From \eqref{express1}-\eqref{express2} we get, for $|\al_+-y_0|+|\al_--y_0|$ sufficiently small
\begin{equation}\label{Bif}\begin{array}{l}
	x_+(0,\xi_+,\al_+,\ep) - x_-(0,\xi_-,\al_-,\ep)	\\
	= u_0(0^+,y_+(0,\xi_+,\al_+,\ep))-u_0(0^-,y_-(0,\xi_-,\al_-,\ep)) + \tilde\xi_+ -\tilde\xi_- 	\\
	\dis - \int_0^\infty (\I-Q_+(\al_+))X_+(s,\al_+)^{-1}b_+(s)ds - \int_{-\infty}^0 Q_-(\al_-)X_-(s,\al_-)^{-1}b_-(s)ds.
\end{array}\end{equation}
Hence the system
\[\begin{array}{l}
	x_+(0,\xi_+,\al_+,\ep) = x_-(0,\xi_-,\al_-,\ep)	\\
	y_+(0,\xi_+,\al_+,\ep) = y_-(0,\xi_-,\al_-,\ep)
\end{array}\]
is equivalent to
\begin{equation}\label{Xbifeq}\left \{\begin{array}{l}
	 \tilde\xi_+ -\tilde\xi_- = u(0^-,y_-(0,\xi_-,\al_-,\ep))-u(0^+,y_+(0,\xi_+,\al_+,\ep))		\\
	 \dis \qquad + \int_0^\infty (\I-Q_+(\al_+))X_+(s,\al_+)^{-1}b_+(s)ds + \int_{-\infty}^0 Q_-(\al_-)X_-(s,\al_-)^{-1}b_-(s)ds	\\
	 y_+(0,\xi_+,\al_+,\ep) - y_-(0,\xi_-,\al_-,\ep) = 0.
\end{array}\right .\end{equation}

Let
\[\begin{array}{l}
	\dis k(\xi_+,\xi_-,\al_+,\al_-,\ep) = \int_0^\infty (\I-Q_+(\al_+))X_+(s,\al_+)^{-1}b_+(s)ds 	\\
	\qquad \dis+ \int_{-\infty}^0 Q_-(\al_-)X_-(s,\al_-)^{-1}b_-(s)ds.	
\end{array}\]
Differentiating $b_+(t)=b_+(t,\xi_+,\al_+,\ep)$ with respect to $\xi_+$ at $\xi_+=0$, $\ep=0$ and using $x_+(t,0,\al_+,0)=u(t,\al_+)$,
$y_+(t,0,\al_+,0)=\al_+$, we see that, for  $t_{i-1}(\al_+)<t<t_i(\al_+)$, we have
\[\begin{array}{l}
\frac{\partial b_+}{\partial\xi_+}(t,0,\al_+,0) = [f_{i-1,x}(u(t,\al_+),\al_+) - A(t,\al_+) ]x_{+,\xi_+}(t,0,\al_+,0) = 0.
\end{array}\]
and for $t>t_N(\al_+)$:
\[\begin{array}{l}
	\frac{\partial b_+}{\partial\xi_+}(t,\xi_+,\al_+,0) = [f_{N,x}(u(t,\al_+),\al_+) -  A(t,\al_+) ]x_{+,\xi_+}(t,0,\al_+,0) = 0.
\end{array}\]
%It is also obvious that
%\[
%\frac{\partial}{\partial\xi_-} b_+(s,\xi_+,\al_+,0) = \frac{\partial}{\partial\xi_+}b_-(s,\xi_-,\al_-,0) = 0.
%\]
Then
\[
	\frac{\partial}{\partial\xi_+} \int_0^\infty (\I-P_+)X_+(s)^{-1}b_+(s)ds = 0
\]
and similarly
\[
	\dis \frac{\partial}{\partial\xi_-} \int_{-\infty}^0 P_-X_-(s)^{-1} b_-(s)ds = 0.
\]
As a consequence, on account of $u(0^\pm,y)=w^\pm_0(y)$, \eqref{Xbifeq} reads:
\[\begin{array}{l}
	\tilde\xi_+ - \tilde\xi_- = w^-_0(\al_-)-w^+_0(\al_+) + R_1(\tilde\xi_+, \tilde\xi_-,\al_+,\al_-,\ep)	\\
	\al_+ = \al_- + R_2(\tilde\xi_+, \tilde\xi_-,\al_+,\al_-,\ep)
\end{array}\]
where $R_1(\tilde\xi_+, \tilde\xi_-,\al_+,\al_-,\ep)=O(|\xi_+|^2 +|\xi_-|^2 +|\ep|)$ and $R_2(\tilde\xi_+, \tilde\xi_-,\al_+,\al_-,\ep)=O(|\ep|)$, uniformly wth respect to $(\xi_+,\xi_-,\al_+,\al_-)$. Since $(\xi_+,\al_+)\mapsto (\tilde\xi_+,\tilde\al_+)$ and $(\xi_-,\al_-)\mapsto (\tilde\xi_-,\tilde\al_-)$ are linearly invertible we see that $|\xi_\pm|= O(|\tilde\xi_\pm|)$ and hence \eqref{Xbifeq} reads:
\begin{equation}\label{bifeq2a}\begin{array}{l}
	\tilde\xi_+ - \tilde\xi_- = w^-_0(\al_-)-w^+_0(\al_+) + \tilde R_1(\tilde\xi_+, \tilde\xi_-,\al_+,\al_-,\ep)	\\
	\al_+=\al_-+ \tilde R_2(\tilde\xi_+, \tilde\xi_-,\al_+,\al_-,\ep)
\end{array}\end{equation}
where $\tilde R_1(\tilde\xi_+, \tilde\xi_-,\al_+,\al_-,\ep)=O(|\tilde\xi_+|^2 +|\tilde \xi_-|^2 +|\ep|)$ and
$\tilde R_1(\tilde\xi_+, \tilde\xi_-,\al_+,\al_-,\ep)=O(|\ep|)$, uniformly wth respect to $(\tilde\xi_+,\tilde\xi_-,\al_+,\al_-)$. Now we can write
$\tilde\xi_+=Q_+(\al)\tilde\xi_+ = Q_+\tilde\xi_+ + (Q_+(\al)-Q_+)\tilde\xi_+$ and hence
\[
\frac{1}{2}|\tilde\xi_+|\le |Q_+\tilde\xi_+|\le 2|\tilde\xi_+|
\]
provided $|\al_+ - y_0|$ is sufficiently small. Similarly, for $|\al_- - y_0|$ is sufficiently small, $\frac{1}{2}|\tilde\xi_-|\le |(\I-Q_-)\tilde\xi_+|\le 2|\tilde\xi_-|$. In particular the map $\tilde\xi_+\mapsto Q_+\tilde\xi_+$ from ${\cal R}Q_+(\al)$ into ${\cal R}Q_+$, and $\tilde\xi_+\mapsto (\I-Q_-)\tilde\xi_+$ from
${\cal N}Q_+(\al)$ into ${\cal N}Q_+$ are linearly invertible. Then, setting
\[
	\bar\xi_+=Q_+\tilde\xi_+, \quad \bar\xi_-=(\I-Q_-)\tilde\xi_-,
\]
\eqref{bifeq2a} can be written as
\begin{equation}\label{bifeq2}\begin{array}{l}
	\bar\xi_+ - \bar\xi_- = w^-_0(\al_-)-w^+_0(\al_+) + \bar R_1(\bar\xi_+, \bar\xi_-,\al_+,\al_-,\ep)	\\
	\al_+=\al_-+ \bar R_2(\bar\xi_+, \bar\xi_-,\al_+,\al_-,\ep)
\end{array}\end{equation}
where $\bar R_1(\bar\xi_+,\bar\xi_-,\al_+,\al_-,\ep)=O(|\bar\xi_+|^2 +|\bar\xi_-|^2 +|\ep|)$ and $\bar R_2(\bar\xi_+,\bar\xi_-,\al_+,\al_-,\ep)=O(|\ep|)$, uniformly with respect to $(\xi_+,\xi_-,\al_+,\al_-)$. Now the map $(\bar\xi_+,\bar\xi_-)\mapsto \bar\xi_+-\bar\xi_-$ is a linear map from
${\cal R}Q_+\times{\cal N}Q_-$ into ${\cal R}Q_+\times {\cal N}Q_-$ whose kernel is ${\cal R}Q_+\cap{\cal N}Q_-$ which, by assumption $A_5)$, is $d$-dimensional.

Let $W\subset{\cal R}Q_+$ be a complement of ${\cal R}Q_+\cap{\cal N}Q_-$ in ${\cal R}Q_+$, so that
\[
	{\cal R}Q_++{\cal N}Q_- = W\oplus{\cal N}Q_-.
\]
Note that $\dim W = k-d$ and $\R^n = [{\cal R}Q_++{\cal N}Q_-]\oplus\span\{\psi_1,\ldots,\psi_d\}$. Recall that we assumed that $(\psi_1,\ldots,\psi_d)$ is orthonormal. Then, let $Q:\R^n\to\R^n$ be the orthogonal projection such that ${\cal R}Q={\cal R}Q_++{\cal N}Q_-$ and ${\cal N}Q=\span\{\psi_1,\ldots,\psi_d\}$. Since $(\I-Q)x\in {\cal N}Q = \span\{\psi_1,\ldots,\psi_d\}$ and $(\psi_1,\ldots,\psi_d)$ is orthonormal we get
\[
	(\I-Q)x = \sum_{j-1}^d \langle \psi_j, (\I-Q)x\rangle \psi_j = \sum_{j-1}^d \langle (\I-Q)\psi_j, x\rangle \psi_j  = \sum_{j-1}^d (\psi_j^T x)\psi_j .
\]
Hence we replace \eqref{bifeq2} with
\begin{equation}\label{bifeq3}\begin{array}{l}
	\bar\xi_+ - \bar\xi_- = Q[ w^-_0(\al_-)-w^+_0(\al_+) + \bar R_1(\bar\xi_+, \bar\xi_-,\al_+,\al_-,\ep)],	\\
	\al_+-\al_- = \bar R_2(\bar\xi_+, \bar\xi_-,\al_+,\al_-,\ep) 		\\
	\psi_j^T[ w^-_0(\al_-)-w^+_0(\al_+) + \bar R_1(\bar\xi_+, \bar\xi_-,\al_+,\al_-,\ep)] = 0.
\end{array}\end{equation}
Since ${\rm dim}[{\cal R}Q_++{\cal N}Q_-]=n-d$, for any $\ep$
\begin{equation}\label{redbif}
	\begin{array}{l}
		\bar\xi_+ - \bar\xi_- - Q[ w^-_0(\al_-)-w^+_0(\al_+) ] =Q\bar R_1(\bar\xi_+, \bar\xi_-,\al_+,\al_-,\ep)],	\\
		\al_+-\al_- = \bar R_2(\bar\xi_+, \bar\xi_-,\al_+,\al_-,\ep)
\end{array}\end{equation}
is essentially a system of $n-d+m$ equations in the $n-d+2m$ variables $(\bar\xi_+,\bar\xi_-,\al_+,\al_-)$ such that, when $\ep=0$, has the solution
\[
	(\bar\xi_+, \bar\xi_-)=(0,0),\quad \al_+=\al_-=y_0.
\]
The Jacobian matrix at this point is
\[
	J=\begin{pmatrix}
		L & -Q w^-_{0,y}(y_0) & Qw^+{0,y}(y_0)	\\
		0 & \I_{\R^m} & - \I_{\R^m}
	\end{pmatrix}
\]
where $L : W\times{\cal N}Q_-\to W\oplus{\cal N}Q_-$ is the invertible linear map given by $L(\bar\xi_+, \bar\xi_-) = \bar\xi_+ - \bar\xi_-$. We have
\[
\rank J = \rank \begin{pmatrix}
		L & Q[w^+_{0,y}(y_0) - w^-_{0,y}(y_0)] & Qw^+{0,y}(y_0)	\\
		0 & 0 & - \I_{\R^m}
	\end{pmatrix} = n-d+m
\]
hence, for $\ep\ne 0$ and sufficiently small \eqref{redbif} has a $m$-dimensional manifold of solutions
\[
	\bar\xi_+=\bar\xi_+(\al_+,\ep), \quad \bar\xi_-=\bar\xi_-(\al_+,\ep), \quad \al_-=\al_-(\al_+,\ep)
\]
where
\[\begin{array}{l}
	|\bar\xi_\pm(\al_+,\ep)| = O(|\al_+-y_0|+|\ep|)	\\
	|\al_-(\al_+,\ep)-y_0|=O(|\al_+-y_0|+|\ep|)
\end{array}\]
Note also that
\[
	\lim_{\ep\to 0} |\al_-(\al_+,\ep)-\al_+| = 0
\]
uniformly with respect to $\al_+$. Then we plug this solution in the third equation in \eqref{bifeq3} and obtain the system of equations
\[
	\psi_j^T[ w^-_0(\al_+) - w^+_0(\al_+) + O(|\al_+-y_0|^2+|\ep|]) = 0, \quad j=1,\ldots,d.
\]
Let
\[
	{\cal M}(\al_+,\ep) = \left (\psi_j^*[ w^-_0(\al_+) - w^+_0(\al_+) + O(|\al_+-y_0|^2+|\ep|)]\right )_{j=1,\ldots,d} .
\]
We have ${\cal M}:\R^m\times\R \to \R^d$, ${\cal M}(y_0,0) = 0$ and
\[\begin{array}{l}
	{\cal M}_{\al_+}(y_0,0) = \left (\psi_j^T[ w^-_{0,y}(y_0) - w^+_{0,y}(y_0)] \right )_{j=1,\ldots,d}.
\end{array}\]
Hence from the Implicit Functions Theorem the existence follows of $\ep_0>0$ such that for any $|\ep|<\ep_0$ there exists a $(m-d)$-dimensional submanifold ${\cal S}$ of $\R^m$ such that when $\alpha_+\in{\cal S}$ we have ${\cal M}(\al_+,\ep) = 0$. For $\al_+\in{\cal S}$, we take
\[
	\al_-=\al_-(\al_+,\ep), \quad \bar\xi_+=\bar\xi_+(\al_+,\ep), \quad \bar\xi_-=\bar\xi_-(\al_+,\ep),
\]
and $\tilde\xi_\pm=\tilde\xi_\pm(\al_+,\ep)$, so that
\[
\bar\xi_+(\al_+,\ep)=Q_+\tilde\xi_+(\al_+,\ep), \quad \bar\xi_-(\al_+,\ep)=(\I-Q_-)\tilde\xi_-(\al_+,\ep).
\]
Then
\[\begin{array}{l}
	x(t,\ep) = x(t,\tilde\xi_+(\al_+,\ep),\tilde\xi_-(\al_+,\ep),\al_+,\al_-(\al_+,\ep),\ep)	\\
	y(t,\ep) = y(t,\tilde\xi_+(\al_+,\ep),\tilde\xi_-(\al_+,\ep),\al_+,\al_-(\al_+,\ep),\ep)
\end{array}\]
with $\al_+\in{\cal S}$ satisfies the conclusion of the Theorem. The proof is complete.
\end{proof}
\begin{remark}\label{remrem} i) According to Remark \ref{rem1} we see that $(x(t,\ep),y(t,\ep))$ satisfies
\[\begin{array}{l}
	\sup_{t\ge T_+}e^{\be t}|x(t,\ep) - \tilde w_+(y(t,\ep),\ep)| \le \tilde \mu_1	\\
	\sup_{t\le T_-}e^{-\be t}|x(t,\ep) - \tilde w_-(y(t,\ep),\ep)| \le \tilde \mu_1	\\
\end{array}\]
where $\tilde\mu_1,\tilde\mu_2$ do not depend on $\ep$.

ii) We can replace the orthonormal basis $(\psi_1,\ldots,\psi_d)$ of $[{\cal R}Q_+ + {\cal N}Q_-]^\perp$ with any independent set $(\tilde\psi_1,\ldots,\tilde\psi_d)$
such that
\[
	\R^n = [{\cal R}Q_+ + {\cal N}Q_-] \oplus \span\{\tilde\psi_1,\ldots,\tilde\psi_d\}.
\]
Indeed, let $\langle \cdot, \cdot\rangle$ be a scalar product on $\R^n$ such that
\[
	[{\cal R}Q_+ + {\cal N}Q_-]^\perp = \span \{ \tilde\psi_1,\ldots,\tilde\psi_d\}.
\]
and let $(\psi_1,\ldots,\psi_d)$ be an orthonormal basis of $\span\{\tilde\psi_1,\ldots,\tilde\psi_d\}$. Then an invertible $d\times d$ matrix $M$ exists such that
\[
	(\tilde\psi_1 \ldots \tilde\psi_d) = (\psi_1 \ldots \psi_d)M.
\]
Hence
\[\begin{array}{l}
	[\tilde\psi_j^T [ w^-_{0,y}(y_0) - w^+_{0,y}(y_0)]_{j=1,\ldots,d} = (\tilde\psi_1 \ldots \tilde\psi_d)^T [ w^-_{0,y}(y_0) - w^+_{0,y}(y_0)]		\\
	= M^T(\psi_1 \ldots \psi_d)^T [ w^-_{0,y}(y_0) - w^+_{0,y}(y_0)] =  M^T[\psi_j^T [ w^-_{0,y}(y_0) - w^+_{0,y}(y_0)]_{j=1,\ldots,d}
\end{array}\]
that is $[\tilde\psi_j^T [w^-_{0,y}(y_0) - w^+_{0,y}(y_0)]$ has rank $d$ if and only if $[\psi_j^T [w^-_{0,y}(y_0) - w^+_{0,y}(y_0)]_{j=1,\ldots,d}$
has rank $d$.
\end{remark}

We conclude this section giving another expression for ${\cal M}_{\al_+}(y_0,0)$ that can be useful in the applications of Theorem \ref{main}.
\begin{prop}\label{altMel} Let $u(t,y)$ be the $C^1_b$-function defined in \eqref{rel_u+-} and let
$\psi \in [{\cal R}Q_++{\cal N}Q_-]^\perp = {\rm span}\{\psi_1,\ldots,\psi_d\}$. Then
\[
	\psi^T[w_{0,y}^-(y_0) - w_{0,y}^+(y_0)] = \int_{-\infty}^\infty \psi(t)^Tf_y(u(t,y_0),y_0)dt
\]
where
\begin{equation}\label{defpsi}\begin{array}{l}
\psi(t) = \left \{\begin{array}{ll}
	(X_-(t,y_0)^T)^{-1}Q_-^T\psi & \hbox{for $t\le 0$}	\\
	(X_+^T(t,y_0))^{-1}(\I-Q_+^T)\psi & \hbox{for $t\ge 0$.}
\end{array}\right .
\end{array}\end{equation}
 Hence, the Melnikov conditon in Theorem \ref{main} reads
\begin{equation}\label{Melcnd}
{\rm rank} \left [ \int_{-\infty}^\infty \psi_j(t)^T f_y(u(t,y_0),y_0) dt\right ]_{j=1,\ldots,d} = d
\end{equation}
where $\psi_j(t)$ is as in \eqref{defpsi} with $\psi_j$ instead of $\psi$.
\end{prop}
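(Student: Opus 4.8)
The plan is to read the identity as a variation-of-constants computation for the inhomogeneous discontinuous variational system, paired against the bounded solution $\psi(t)$ of its adjoint. First I would fix $y=y_0$ and set $v_\pm(t)=u_{\pm,y}(t,y_0)$, an $n\times m$ matrix; by Lemmas \ref{bddw} and \ref{Onu+} these are bounded on $\R_+$, resp.\ $\R_-$, and $v_\pm(0)=w^\pm_{0,y}(y_0)$. Differentiating $\dot u_\pm=f(u_\pm,y)$ with respect to $y$ on each smoothness interval shows that $v_+$ solves $\dot v=A(t,y_0)v+f_y(u(t,y_0),y_0)$ for $t\ge 0$, $t\ne t_i(y_0)$, with $A(t,y_0)$ as in \eqref{var+}, and similarly $v_-$ for $t\le0$. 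Differentiating the defining relations for the switching times, $h(u(t_\ell(y),y),y)=c$ (a constant), at $y=y_0$ and solving for $t_\ell'(y_0)$ produces the jump law there; the part linear in $v_\pm(t_\ell^-)$ reassembles, by the very definition \eqref{defB*}, into $B_\ell(y_0)$, so that $v_\pm(t_\ell^+)=B_\ell(y_0)v_\pm(t_\ell^-)+e_\ell$ where $e_\ell$ is a scalar multiple of the vector $\dot u(t_\ell^-,y_0)-\dot u(t_\ell^+,y_0)$.

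Next I would check that $\psi(t)$ in \eqref{defpsi} is the bounded solution on $\R$ of the adjoint discontinuous system. Since $\psi\perp{\cal R}Q_+$ one has $(\I-Q_+^T)\psi=\psi$, so for $t\ge0$ we have $\psi(t)^T=\psi^TX_+(t,y_0)^{-1}$; this solves the adjoint equation, has jumps $\psi(t_i^-)^T=\psi(t_i^+)^TB_i(y_0)$, and decays as $t\to+\infty$ by the dichotomy of Proposition \ref{basicprop}; symmetrically for $t\le0$ using $\psi\perp{\cal N}Q_-$, i.e.\ $Q_-^T\psi=\psi$; and both one-sided values at $t=0$ equal $\psi$, so $\psi$ is continuous there. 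On each smoothness interval $\frac{d}{dt}\big(\psi(t)^Tv_\pm(t)\big)=\psi(t)^Tf_y(u(t,y_0),y_0)$, and at each $t_\ell$ the jump of $\psi(t)^Tv_\pm(t)$ collapses, via $\psi(t_\ell^-)^T=\psi(t_\ell^+)^TB_\ell(y_0)$, to $\psi(t_\ell^+)^Te_\ell$, i.e.\ a scalar multiple of $\psi(t_\ell^+)^T[\dot u(t_\ell^-,y_0)-\dot u(t_\ell^+,y_0)]$. Here I would use that, by Proposition \ref{propudot}, $\dot u(t,y_0)$ is a bounded solution on $\R_\pm$ of the homogeneous systems \eqref{var+}, \eqref{var-}, whence $\dot u(0^\pm,y_0)$ lies in ${\cal R}Q_+$, resp.\ ${\cal N}Q_-$, which $\psi$ annihilates; thus $\psi(t)^T\dot u(t,y_0)\equiv0$, and (with some care about the one-sided limits at the $t_\ell$) the switching-time contributions vanish.

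Finally, integrating $\frac{d}{dt}\big(\psi(t)^Tv_+(t)\big)$ over $[0,\infty)$ and $\frac{d}{dt}\big(\psi(t)^Tv_-(t)\big)$ over $(-\infty,0]$: the endpoint terms at $\pm\infty$ vanish since $\psi$ decays while $v_\pm$ is bounded, the terms at $t=0$ contribute $-\psi^Tw^+_{0,y}(y_0)$ from the right half-line and $+\psi^Tw^-_{0,y}(y_0)$ from the left half-line, and the switching-time terms were just shown to vanish; adding the two identities yields $\psi^T[w^-_{0,y}(y_0)-w^+_{0,y}(y_0)]=\int_{-\infty}^\infty\psi(t)^Tf_y(u(t,y_0),y_0)\,dt$. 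The form \eqref{Melcnd} of the Melnikov condition then follows at once by combining this, applied to $\psi=\psi_j$, with the expression ${\cal M}_{\al_+}(y_0,0)=\big(\psi_j^T[w^-_{0,y}(y_0)-w^+_{0,y}(y_0)]\big)_{j=1,\ldots,d}$ obtained in the proof of Theorem \ref{main}; by Remark \ref{remrem}(ii) the answer does not depend on the particular basis $\psi_1,\ldots,\psi_d$ chosen.

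The step I expect to be the real obstacle is the impulsive bookkeeping at the switching times $t_\ell(y_0)$: one must show that the forcing $e_\ell$ generated by the $y$-dependence of those times produces no net contribution once paired with $\psi(t)$, and this rests delicately on the orthogonality of $\psi$ to both ${\cal R}Q_+$ and ${\cal N}Q_-$ together with $\dot u(\cdot,y_0)$ being a bounded solution of the homogeneous variational problems on the half-lines. A lesser technical point is the uniform integrability of $t\mapsto\psi(t)^Tf_y(u(t,y_0),y_0)$ and the legitimacy of passing to the limits at $\pm\infty$, both supplied by the bounds of Lemmas \ref{bddw}, \ref{Onu+} and the dichotomy estimates.
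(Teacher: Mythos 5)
Your strategy is, at bottom, the paper's own computation written in integrated form: the paper represents $u_y(t,y_0)$ on each half-line through the Green-function formula of Lemma \ref{prelem} (with $\tau=0$), lets $t\to 0^\mp$, and then applies $\psi^T$ using $\psi^T(\I-Q_-)=0$ and $\psi^TQ_+=0$; pairing $u_{\pm,y}(t,y_0)$ with the bounded adjoint solution $\psi(t)$ and integrating $\frac{d}{dt}\bigl(\psi(t)^Tu_{\pm,y}(t,y_0)\bigr)$ over the two half-lines is the same argument. Your verification of the properties of $\psi(t)$ (adjoint equation, jump relation $B_\ell^T\psi(t_\ell^+)=\psi(t_\ell^-)$, exponential decay, continuity at $t=0$), the treatment of the endpoint terms, and the final assembly with ${\cal M}_{\al_+}(y_0,0)$ and Remark \ref{remrem} are all correct.

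The gap is exactly at the step you flagged, and it is not closed by the facts you cite. Differentiating $u_\ell^\pm(t_\ell(y),y)=w_\ell^\pm(y)$ and $h(w_\ell^\pm(y),y)=c$ (the relevant level) gives, as you say, $u_y(t_\ell^+,y_0)=B_\ell(y_0)u_y(t_\ell^-,y_0)+e_\ell$, where $e_\ell$ equals $[\dot u(t_\ell^-,y_0)-\dot u(t_\ell^+,y_0)]$ multiplied by $-h_y(u(t_\ell(y_0),y_0),y_0)\big/\bigl(h_x(u(t_\ell(y_0),y_0),y_0)\dot u(t_\ell^-,y_0)\bigr)$; note the factor $h_y$, which is absent from \eqref{defB*}. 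The switching contribution to $\psi^Tu_y$ is $\psi(t_\ell^+)^Te_\ell$, and after using $\psi(t_\ell^+)^T\dot u(t_\ell^+,y_0)=0$ it reduces to a multiple of the mixed product $\psi(t_\ell^+)^T\dot u(t_\ell^-,y_0)$. The identity $\psi(t)^T\dot u(t,y_0)\equiv 0$ yields only the matched one-sided products $\psi(t_\ell^\pm)^T\dot u(t_\ell^\pm,y_0)=0$ (and, through $B_\ell^T$, it merely reproduces them), so it says nothing about the mixed product, and none of $A_1)$--$A_5)$ forces it to vanish. These terms do disappear when $h_y=0$ at the crossing points of $u(\cdot,y_0)$ --- in particular when $h=h(x)$, as in the application of Section 7 --- or when $\dot u$ is continuous at $t_\ell$, but not in general. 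For comparison, the paper's proof does not meet these terms because it applies Lemma \ref{prelem} to $u_y(t,y_0)$ directly, which presupposes the homogeneous relations $u_y(t_\ell^+)=B_\ell u_y(t_\ell^-)$; your computation makes explicit that this presupposition carries the extra factor $h_y$. To complete your argument you must either prove that $h_y(u(t_\ell(y_0),y_0),y_0)\,\psi(t_\ell^+)^T\dot u(t_\ell^-,y_0)=0$ for every $\ell$ (for instance by adding the hypothesis that $h$ is independent of $y$ near the points $u(t_\ell(y_0),y_0)$), or else carry the sum of the terms $\psi(t_\ell^+)^Te_\ell$ (and its analogue for $t\le 0$) as explicit correction terms in the stated formula.
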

\begin{proof}
As $u_y(t,y_0)$ is a bounded solution of
\[
	\dot x = A(t)x +  f_y(u(t,y_0),y_0)
\]
where $A(t)$ is as in \eqref{defA} with $y=y_0$,  and
\[
f_y(u(t,y_0),y_0) = \left \{
\begin{array}{ll}
	f_{-M,y}(u(t,y_0),y_0) & \hbox{if $t<t_{-M}(y_0)$}		\\
	f_{j+1,y}(u(t,y_0),y_0) & \hbox{if $t_j(y_0)<t<t_{j+1}(y_0)$} 	\\
	f_{i-1,y}(u(t,y_0),y_0) & \hbox{if $t_{i-1}(y_0)<t<t_i(y_0)$} 	\\
	f_{N,y}(u(t,y_0),y_0) & \hbox{if $t>t_N(y_0)$}	\\	
\end{array}\right .\]
from to Lemma \ref{prelem}, equation \eqref{xneg}, with $\tau=0$ we get:
\[\begin{array}{l}
	\dis u_y(t,y_0) = X_-(t,y_0)(\I-Q_-)u_y(0^-,y_0)		\\
	\dis + \int_{-\infty}^t X_-(t,y_0)Q_-X_-(s,y_0)^{-1} f_y(u(s,y_0),y_0) ds 		\\
	\dis - \int_t^0 X_-(t,y_0)(\I-Q_-)X_-(s,y_0)^{-1} f_y(u(s,y_0),y_0) ds.
\end{array}\]
Taking the $\lim_{t\to 0^-}$ and using $u_y(0^-,y_0) = w^-_{0,y}(y_0)$, we get
\[
	w^-_{0,y}(y_0) = (\I-Q_-)w^-_{0,y}(y_0) + \int_{-\infty}^0 Q_-X_-(s)^{-1} f_y(u(s,y_0),y_0) ds
\]
that is
\[
	Q_-w^-_{0,y}(y_0) = \int_{-\infty}^0 Q_-X_-(s)^{-1} f_y(u(s,y_0),y_0) ds .
\]
Similarly
\[
	(\I-Q_+)w^+_{0,y}(y_0) = -\int_0^\infty (\I-Q_+)X_+(s)^{-1} f_y(u(s,y_0),y_0) ds .
\]
Now, since
\[
[{\cal R}Q_++{\cal N}Q_-]^\perp = [{\cal R}Q_+]^\perp\cap[{\cal N}Q_-]^\perp = {\cal N}Q_+^T \cap {\cal R}Q_-^T
\]
for any $\psi\in[{\cal R}Q_++{\cal N}Q_-]^\perp$ we get
\[\begin{array}{l}
	\psi^T(\I-Q_-)  = 0	\\
	\psi^TQ_+ = 0.
\end{array}\]
Hence, for any $\psi\in [{\cal R}Q_++{\cal N}Q_-]^\perp$ we have:
\begin{equation}\label{alt}\begin{array}{l}
	\psi^T[w^-_{0,y}(y_0) - w^+_{0,y}(y_0)] = \psi_j^T[Q_-w^-_{0,y}(y_0) - (\I-Q_+)w^+_{0,y}(y_0)] 	\\
	\qquad \dis = \int_{-\infty}^\infty \psi(t)^T f_y(u(t,y_0),y_0) dt
\end{array}\end{equation}
where $\psi(t)$ is as in \eqref{defpsi} with $\psi\in [{\cal R}Q_++{\cal N}Q_-]^\perp$ instead of $\psi_j$. The proof is complete.
\end{proof}

The adjoint system to \eqref{var+} and \eqref{var-} is given by \cite{BF0}
\begin{equation}\label{advar}
\begin{array}{l}
	\dot w = -A^T(t,\al)w \quad \hbox{if $t\ge 0$}	\\
	B_\ell(\al)^Tw(t_\ell(\al)^+) = w(t_\ell(\al)^-)
\end{array}
\end{equation}
where $\ell \in\{-M,\ldots,N\}\setminus\{0\}$.

It is easy to check that, if $\psi\in[{\cal R}Q_++{\cal N}Q_-]^\perp$, the function $\psi(t)$ defined in \eqref{defpsi} is a bounded solution of \eqref{advar} for $\al=y_0$. We prove that if ${\rm span}\{\psi_1,\ldots,\psi_d\}=[{\cal R}Q_++{\cal N}Q_-]^\perp$ then $\{\psi_1(t),\ldots,\psi_d(t)\}$ is a basis for the space of the bounded solutions of \eqref{advar}. Indeed, the fundamental matrix of \eqref{advar} on $t\ge 0$ is $[X_+(t)^T]^{-1}$, and the fundamental matrix of \eqref{advar} on $t\le 0$ is $[X_-(t)^T]^{-1}$. As a consequence \eqref{advar} has an exponential dichotomy on $\R_+$ and $\R_-$ with projections $(\I-Q_+^T)$ and $(\I-Q_-^T)$ respectively. So, the space of bounded solutions of \eqref{advar}, $C^1$ for
$t\ne t_\ell(\al)$, are those whose initial conditions belong to
\[
{\cal R}(\I-Q_+^T)\cap {\cal N}(\I-Q_-^T) = ({\cal R}Q_+)^\perp \cap ({\cal N}Q_-)^\perp = ({\cal R}Q_+ + {\cal N}Q_-)^\perp .
\]
Then the dimension of the space of solutions of \eqref{advar}, bounded on $\R$, is $d$ and $(\psi_1(t),\ldots,\psi_d(t))$ span this space.
\medskip

Now suppose that $x(t)$ and $\psi(t)$ are bounded solution on $\R$ of \eqref{var+}-\eqref{var-} and \eqref{advar} resp., both continuous for
$t\ne t_\ell(y_0)$. For $t\ne t_\ell(y_0)$ we have
\[
\frac{d}{dt}[\psi(t)^Tx(t)] = \dot\psi(t)^Tx(t) + \psi(t)^T\dot x(t) = - \psi(t)^TA(t,\al)x(t) + \psi(t)^T\dot x(t) = 0.
\]
Moreover
\[\begin{array}{l}
	\psi(t_\ell(\al)^+)^Tx(t_\ell(\al)^+)) = [(B_\ell(\al)^{-1})^T\psi(t_\ell(\al)^-)]^Tx(t_\ell(\al)^+)	\\
	=\psi(t_\ell(\al)^-)^T[B_\ell(\al)^{-1}x(t_\ell(\al)^+] = \psi(t_\ell(\al)^-)^Tx(t_\ell(\al)^-)
\end{array}\]
Thus we conclude that $\psi(t)^Tx(t)$ is constant on $\R$ (see also \cite{SP}).

\section{An example}
An interesting application of Theorem \ref{main} is when $d=1$ that is when
\[
{\cal R}Q_+\cap{\cal N}Q_- =\span\{\dot u(0,y_0)\}.
\]
This condition is trivially satisfied when $n=2$ since in this case $k=n-k=1$. Moreover, when $n=2$, we also have $\dim{\cal R}Q_+=
\dim{\cal N}Q_-=1$ and hence
\begin{equation}\label{RN}
{\cal R}Q_+={\cal N}Q_- =\span\{\dot u(0,y_0)\}.
\end{equation}

In this section we consider examples of applications of Theorem \ref{main} with $n=2$, $m=1$ and $d=1$. First we prove some general facts concerning two-dimensional differential equations depending on a slowly varying variable. So the system is
\begin{equation}\label{genex}\begin{array}{l}
	\dot x_1 = F_1(x_1,x_2,y)	\\
	\dot x_2 = F_2(x_1,x_2,y)	\\
	\dot y = \ep g(x_1,x_2,y) .
\end{array}\end{equation}
Suppose $u(t,y)=(u_1(t,y), u_2(t,y))$ is a piecewise smooth solution of \eqref{genex} for $\ep=0$ satisfying assumptions $A_1)-A_4)$.
To write the Melnikov condition \eqref{Melcnd}, that in this case reads
\[
	\int_{-\infty}^\infty \psi(t)^T\begin{pmatrix} F_{1,y}(u(t,y_0),y_0) \\ F_{2,y}(u(t,y_0),y_0) \end{pmatrix} dt \ne 0,
\]
we need to know the (unique) bounded solution $\psi(t)$ of the adjoint system \eqref{advar}.

Let
\[
	J = \begin{pmatrix}	0 & -1 \\ 1 & 0	\end{pmatrix}
\]
 and, again, $\ell\in\{-M,\ldots,-1,1,\ldots,N\}$.

We prove the following
\begin{prop}\label{solAd2dim}
Let $A(t) = [a_{jk}(t)]_{1\le j,k\le 2} := [F_{j,x_k}(u_1(t),u_2(t),y_0)]_{1\le j,k\le 2}$, $B_\ell$ as in \eqref{defB*}, with $\al=y_0$ and
\[
	v(t) : =  e^{-\int_0^t a_{11}(s)+a_{22}(s) ds}J\dot u(t,y_0) = e^{-\int_0^t a_{11}(s)+a_{22}(s) ds}
	\begin{pmatrix} -\dot u_2(t,y_0) \\ \dot u_1(t,y_0) \end{pmatrix}.
\]
Then the space of bounded solution of the adjoint variational system are of the form
\[
	\psi(t) = \left \{\begin{array}{ll}
		\mu_{-M} v(t) &  \hbox{for $t\le t_{-M}$}				\\
		\mu_{j+1}v(t), & \hbox{for $t_j<t\le t_{j+1}$}		\\
		\mu_iv(t), & \hbox{for $t_i\le t<t_{i+1}$}		\\
		\mu_N v(t) &  \hbox{for $t\ge t_N$}	
	\end{array}\right .
\]
where $\mu_{-M}\ne 0$ is arbitrary and
\begin{equation}\label{jmp}\begin{array}{l}
	\mu_{j+1}v(t_j^+)  = \mu_j[B_j^T]^{-1}v(t_j^-) 	\\
	\mu_iv(t_i^+)  = \mu_{i-1}[B_i^T]^{-1}v(t_i^-) .
\end{array}\end{equation}
\end{prop}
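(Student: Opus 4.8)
The plan is to show that $v(t)$ solves the adjoint variational equation $\dot w=-A^T(t)w$ on each interval of smoothness, that on the two outer intervals it spans the (one–dimensional) space of bounded solutions, and that the scalars $\mu_\ell$ can be matched successively across the jump points $t_\ell$; every solution of \eqref{advar} bounded on $\R$ is then forced into the displayed shape. First I would record the elementary planar identity $JA+A^TJ=(\operatorname{tr}A)\,J$, valid for every $2\times2$ matrix $A$ and checked by multiplying out. Writing $p(t)=a_{11}(t)+a_{22}(t)=\operatorname{tr}A(t)$ and using that, by Proposition~\ref{propudot}, $\dot u(t,y_0)$ solves $\dot x=A(t)x$ on each interval $(t_\ell,t_{\ell+1})$, one gets for $v(t)=e^{-\int_0^t p(s)\,ds}J\dot u(t,y_0)$
\[
\dot v=-p\,v+e^{-\int_0^t p}\,JA\dot u=e^{-\int_0^t p}\bigl(JA-(\operatorname{tr}A)J\bigr)\dot u=-e^{-\int_0^t p}A^TJ\dot u=-A^T(t)\,v
\]
on every such interval, so $v$ solves $\dot w=-A^T(t)w$ off the points $t_\ell$. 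Moreover $v(t)^T\dot u(t,y_0)=e^{-\int_0^t p}(J\dot u)^T\dot u=0$ since $J$ is skew-symmetric, and $v(t)\neq0$ for all $t$: $\dot u(t_\ell^\pm,y_0)\neq0$ by $a_2)$--$a'_2)$, and on each open subinterval $\dot u(\cdot,y_0)$ solves a homogeneous linear equation, hence cannot vanish there without vanishing identically.

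Next I would identify $v$ on the outer intervals. On $t\le t_{-M}$ the frozen equation $\dot x=A_{-M}(t)x$ has an exponential dichotomy (roughness applied to $A_3)$, as in Lemma~\ref{extend}); since $u^-_{-M}(t,y_0)\to w_-(y_0)$ as $t\to-\infty$ by $A_2)$, the derivative $\dot u^-_{-M}(t,y_0)=f_{-M}(u^-_{-M}(t,y_0),y_0)\to0$, so it is bounded on $(-\infty,t_{-M}]$ and, being nonzero, spans the one–dimensional ($n-k=1$) space of bounded solutions there. By the standard duality between the dichotomy of $\dot x=A_{-M}(t)x$ and that of its adjoint, the solutions of $\dot w=-A^T_{-M}(t)w$ bounded on $(-\infty,t_{-M}]$ form exactly the orthogonal complement of $\operatorname{span}\{\dot u^-_{-M}(t,y_0)\}$; as $v(t)\perp\dot u^-_{-M}(t,y_0)$ and $v(t)\neq0$, this forces $v$ to lie in that complement, hence to be bounded on $(-\infty,t_{-M}]$ and to span the bounded adjoint solutions there. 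The symmetric argument on $t\ge t_N$ shows $v$ is bounded on $[t_N,\infty)$ and spans the bounded adjoint solutions there; in particular $v$ is bounded on all of $\R$.

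Then I would glue and characterise. Fix $\mu_{-M}\neq0$ and set $\psi=\mu_{-M}v$ on $(-\infty,t_{-M}]$. Suppose $\psi$ has been defined as a nonzero multiple of $v$ up to $t_\ell^-$. From Proposition~\ref{propudot}, $B_\ell(y_0)\dot u(t_\ell(y_0)^-,y_0)=\dot u(t_\ell(y_0)^+,y_0)$, whence
\[
\bigl((B_\ell^T)^{-1}v(t_\ell^-)\bigr)^T\dot u(t_\ell^+,y_0)=v(t_\ell^-)^TB_\ell^{-1}\dot u(t_\ell^+,y_0)=v(t_\ell^-)^T\dot u(t_\ell^-,y_0)=0,
\]
so $(B_\ell^T)^{-1}v(t_\ell^-)$ is a nonzero multiple of $v(t_\ell^+)$; hence there is a unique nonzero scalar for the next interval making \eqref{jmp} hold, and declaring $\psi$ to equal that scalar times $v$ there extends $\psi$ across $t_\ell$ so that it keeps solving $\dot w=-A^T(t)w$ and satisfies the jump rule in \eqref{advar}. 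Iterating to $[t_N,\infty)$ yields a solution of \eqref{advar}, bounded on $\R$ because $v$ is, of the asserted form. Conversely, if $\psi$ solves \eqref{advar} and is bounded on $\R$, then on $(-\infty,t_{-M}]$ it lies in the line spanned by $v$, so $\psi=\mu_{-M}v$ there; since the pairing $\psi(t)^T\dot u(t,y_0)$ is constant on $\R$ (the computation following Proposition~\ref{altMel}), it equals $\mu_{-M}v^T\dot u\equiv0$, and hence on each subsequent interval $\psi$ is a solution of the $2$-dimensional equation $\dot w=-A^T(t)w$ everywhere orthogonal to $\dot u(\cdot,y_0)\neq0$, so a scalar multiple of $v$; the jump conditions of \eqref{advar} then give exactly \eqref{jmp} (and $\psi\equiv0$ if $\mu_{-M}=0$). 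This exhibits all bounded solutions in the claimed form, consistently with $d=1$.

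I expect the main obstacle to be the identification of $v$ on the two outer half-lines: $v$ is a priori merely a solution of the adjoint equation orthogonal to $\dot u$, and one must combine the planar identity, Proposition~\ref{propudot} (which tells us $\dot u^\pm_{\pm M}(\cdot,y_0)$ spans the bounded solutions of the primary system at the ends) and the duality of exponential dichotomies to conclude that $v$ lies in the bounded subspace of the adjoint rather than in the complementary, exponentially growing one. The remaining ingredients—the algebraic identity, the sign/index bookkeeping at the jumps, and the propagation of the constants—are routine.
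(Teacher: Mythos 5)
Your proposal is correct and follows essentially the same route as the paper's proof: the planar identity (your $JA+A^TJ=(\operatorname{tr}A)J$ is equivalent to the paper's $JA=\det A\,(A^T)^{-1}J$ plus Cayley--Hamilton) gives $\dot v=-A^Tv$ off the jump points, the scalars $\mu_\ell$ are produced exactly as in the paper by noting that both $v(t_\ell^+)$ and $(B_\ell^T)^{-1}v(t_\ell^-)$ are orthogonal to $\dot u(t_\ell^+,y_0)\ne0$, and boundedness comes from the duality of exponential dichotomies. The only minor variations are that you verify boundedness on the outer half-lines via the frozen systems' dichotomies rather than at $t=0$ via \eqref{RN} as the paper does, and you add an explicit converse (constancy of $\psi^T\dot u$) where the paper relies on the one-dimensionality ($d=1$) of the space of bounded adjoint solutions; both are sound.
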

\begin{proof}
First we show that constants satisfying \eqref{jmp} exist. Indeed recall that for any $i=1,\ldots,N$
\[\begin{array}{l}
[[B_i^T]^{-1}v(t_i^-)]^T\dot u(t_i^+,y_0) = v(t_i^-)^T[B_i^{-1}\dot u(t_i^+,y_0)] = v(t_i^-)^T\dot u(t_i^-,y_0) = 0	\\
v(t_i^+)^T\dot u(t_i^+,y_0) = 0,
\end{array}\]
where $t_i=t_i(y_0)$, and then
\[
v(t_i^+)  = \tilde\mu_{i}[B_i^T]^{-1}v(t_i^-)
\]
for some $\tilde \mu_{i}\in \R$, since both vectors are orthogonal to $\dot u(t_i^+,y_0)$. In  a similar way we see that constants satisfying \eqref{jmp} for $j=-1,\ldots,-M$ exist.

Next, using $JA(t) = {\rm det}A(t) [A(t)^T]^{-1}J$:
\[\begin{array}{l}
\dot v(t) = e^{-\int_0^t a_{11}(s)+a_{22}(s) ds} J A(t) \begin{pmatrix} \dot u_1(t,y_0) \\ \dot u_2(t,y_0) \end{pmatrix} - (a_{11}(t)+a_{22}(t))v(t) 	\\
	= e^{-\int_0^t a_{11}(s)+a_{22}(s) ds}{\rm det} A(t)[A(t)^T]^{-1}J \begin{pmatrix} \dot u_1(t,y_0) \\ \dot u_2(t,y_0) \end{pmatrix} - (
	a_{11}(t)+a_{22}(t))v(t)		\\
	= {\rm det} A(t) [A(t)^T]^{-1}v(t) - (a_{11}(t)+a_{22}(t))v(t) = - A(t)^Tv(t).
\end{array}\]
Hence
\[
	\dot \psi(t) =  - A(t)^T\psi (t),
\]
for any $t\ne t_\ell$. Moreover
\[\begin{array}{l}
	\psi(t_j^-) = \mu_jv(t_j^-) = B_j^T \mu_{j+1}v(t_j^+) = B_j^T \psi(t_j^+)		\\
	\psi(t_i^+) = \mu_iv(t_i^+) = \mu_{i-1}[B_i^T]^{-1}v(t_i^-) = [B_i^T]^{-1}\psi(t_i^-) .
\end{array}\]
Finally, we prove that $\psi(t)$ is bounded. As the adjoint system has an exponential dichotomy on $\R_+$, resp. $\R_-$, with projections $\I-Q_+^T$, resp. $I-Q_-^T$, it is enough to prove that
\[
	\psi(0) \in {\cal R}(\I-Q_+^T) \cap {\cal N}(I-Q_-^T) = {\cal R}(Q_+)^\perp \cap {\cal N}(Q_-)^\perp = \{\dot u(0)\}^\perp
\]
the last equality following from \eqref{RN}. But
\[
	\langle \psi(0), \dot u(0,y_0)\rangle = \langle J\dot u(0,y_0),\dot u(0,y_0)\rangle = 0
\]
and then $\psi(t)$ is bounded concluding the proof of the Proposition.
\end{proof}
\begin{remark}\label{remmui}
i) From \eqref{jmp} we have
\[\begin{array}{l}
	\mu_i J\dot u(t_i^+,y_0) = \mu_{i-1} [B_i^T]^{-1}J\dot u(t_i^-,y_0)		\\
	\mu_{j+1} J\dot u(t_j^+,y_0) = \mu_j [B_j^T]^{-1}J\dot u(t_j^-,y_0)	
\end{array}\]
and then
\[
	\mu_i\|\dot u(t_i^+,y_0) \|^2 = \mu_i\langle J\dot u(t_i^+,y_0) ,J\dot u(t_i^+,y_0) \rangle =
	\mu_{i-1}\langle [B_i^T]^{-1}J\dot u(t_i^-,y_0),J\dot u(t_i^+,y_0) \rangle
\]
and similarly
\[
	\mu_{j+1}\|\dot u(t_j^+,y_0) \|^2 = \mu_{j+1}\langle J\dot u(t_j^+,y_0) ,J\dot u(t_j^+,y_0) \rangle =
	\mu_j\langle [B_j^T]^{-1}J\dot u(t_j^-,y_0), J\dot u(t_j^+,y_0) \rangle.
\]
Hence all $\mu_\ell$'s can be computed in terms of $\dot u(t_\ell^\pm,y_0)$.

ii) Since $\mu_{-M}\ne 0$ and all $B_j$, $B_i$ are invertible, we see that $\mu_\ell\ne 0$ for all $\ell$.
\end{remark}
\medskip

The case where all $\mu_\ell$ are equal is of particular interest, since in this case we can take $\psi(t)=v(t)$ and the Melnikov condition  reads
\[
	\Delta : = \int_{-\infty}^\infty e^{-\int_0^t a_{11}(s)+a_{22}(s) ds} \begin{pmatrix} -\dot u_2(t,y_0) \\ \dot u_1(t,y_0) \end{pmatrix}^T
	\begin{pmatrix} F_{1,y}(u(t,y_0),y_0) \\ F_{2,y}(u(t,y_0),y_0) \end{pmatrix} dt \ne 0.
\]
If, moreover, $a_{11}(t)+a_{22}(t)=0$ we have
\begin{equation}\label{Delta}\begin{array}{l}
	\dis \Delta = \int_{-\infty}^\infty F_{2,y}(u(t,y_0),y_0) \dot u_1(t,y_0) - F_{1,y}(u(t,y_0),y_0)\dot u_2(t,y_0) dt \\
	\dis =  \int_{\Gamma} F_{2,y}(u_1,u_2,y_0) du_1 - F_{1,y}(u_1,u_2,y_0)du_2
\end{array}\end{equation}
where $\Gamma = \{ (u_1(t),u_2(t)) |t\in\R\}$.

We have the following
\begin{prop}\label{nice} Equations \eqref{jmp} are satisfied with $\mu_0=1$ and $\mu_\ell=1$, if and only if there exist $\nu_\ell$ such that
\begin{equation}\label{redcond}
	J[\dot u(t_\ell(y_0)^+,y_0)  - \dot u(t_\ell(y_0)^-,y_0)] = \nu_\ell h_x(u(t_\ell(y_0),y_0),y_0)^T.
\end{equation}
\end{prop}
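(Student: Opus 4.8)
The plan is to reduce the system \eqref{jmp}, taken with $\mu_\ell\equiv1$, to a single identity at each breaking time $t_\ell(y_0)$ and then to recognise that identity as \eqref{redcond}. Throughout I write $p:=h_x(u(t_\ell(y_0),y_0),y_0)$, $a:=\dot u(t_\ell(y_0)^-,y_0)$, $b:=\dot u(t_\ell(y_0)^+,y_0)$, and $e(t):=e^{-\int_0^t a_{11}(s)+a_{22}(s)\,ds}$, so that $v(t)=e(t)\,J\dot u(t,y_0)$. Since $a_{11}+a_{22}$ is the trace of $A(t)$ it is piecewise continuous with jumps only at the $t_\ell(y_0)$, hence its primitive is continuous and $e(t)$ is continuous and strictly positive. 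Therefore $v(t_\ell(y_0)^\pm)=e(t_\ell(y_0))\,J\dot u(t_\ell(y_0)^\pm,y_0)$, and, after cancelling the positive scalar $e(t_\ell(y_0))$, each equation of \eqref{jmp} written with coefficient $1$ on both sides becomes
\[
B_\ell(y_0)^T\,Jb = Ja .
\]
Thus ``\eqref{jmp} holds with $\mu_0=1$ and all $\mu_\ell=1$'' is precisely the requirement that this identity hold for every $\ell\in\{-M,\dots,-1,1,\dots,N\}$.

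Next I would compute $B_\ell(y_0)^T$. From \eqref{defB*}, $B_\ell(y_0)=\I-\frac{1}{pa}(a-b)p$, hence $B_\ell(y_0)^T=\I-\frac{1}{pa}p^T(a-b)^T$. Since $J$ is skew-symmetric we have $b^TJb=0$, so $(a-b)^TJb=a^TJb$ and
\[
B_\ell(y_0)^T\,Jb = Jb-\frac{a^TJb}{pa}\,p^T .
\]
Consequently the identity $B_\ell(y_0)^T\,Jb=Ja$ is equivalent to $J(b-a)=\frac{a^TJb}{pa}\,p^T$, which in particular says that $J(b-a)$ is a scalar multiple of $p^T=h_x(u(t_\ell(y_0),y_0),y_0)^T$, i.e.\ \eqref{redcond} holds (with $\nu_\ell=\frac{a^TJb}{pa}$). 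Conversely, if \eqref{redcond} holds at the index $\ell$, then taking the inner product of $J(b-a)=\nu_\ell p^T$ with $a$ and using $a^TJa=0$ and $a^Tp^T=pa$ gives $a^TJb=\nu_\ell(pa)$; transversality ($a_2)$, resp.\ $a_2')$) gives $pa\ne0$, so $\nu_\ell=\frac{a^TJb}{pa}$ is forced, and then $B_\ell(y_0)^T\,Jb=Jb-\nu_\ell p^T=Jb-J(b-a)=Ja$.

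Assembling these equivalences index by index proves the proposition. The argument is entirely computational; the only points that need some care are the bookkeeping of transposes, so that $B_\ell(y_0)^T$ acts correctly on the column vector $Jb$, and the observation in the converse direction that \eqref{redcond} already pins down $\nu_\ell$ uniquely, so that no compatibility condition beyond \eqref{redcond} itself is needed. The invertibility of $B_\ell(y_0)$, used implicitly in writing \eqref{jmp}, has already been established in Proposition~\ref{derxwrxi}.
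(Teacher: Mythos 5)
Your proof is correct and follows essentially the same route as the paper's: reduce \eqref{jmp} with all $\mu_\ell=1$ to $B_\ell(y_0)^TJ\dot u(t_\ell^+,y_0)=J\dot u(t_\ell^-,y_0)$, transpose the rank-one formula \eqref{defB*}, use the skew-symmetry of $J$ to identify the scalar coefficient, and in the converse direction pair with $\dot u(t_\ell^-,y_0)$ to pin down $\nu_\ell$ via transversality. Your explicit remarks on the continuity of the exponential trace factor and on the forced value of $\nu_\ell$ are minor clarifications of steps the paper leaves implicit, not a different argument.
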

\begin{proof}
We have $\mu_0=1$, $\mu_\ell=1$ for all $\ell$, if and only if the following holds.
\begin{equation}\label{redcnd}%\begin{array}{l}
	B_\ell^TJ\dot u(t_\ell(y_0)^+,y_0) = J\dot u(t_\ell(y_0)^-,y_0) 		%\\
%	B_j^T J\dot u(t_j^+,y_0) = J\dot u(t_j^-,y_0)
%\end{array}
\end{equation}
%or, equivalently
%\begin{equation}\label{redcnd}\begin{array}{l}
%	B_i^TJf_i(u(t_i,y_0),y_0) = Jf_{i-1}(u(t_i,y_0),y_0)		\\
%	B_j^TJf_{j+1}(u(t_j,y_0),y_0) = Jf_j(u(t_j,y_0),y_0)
%\end{array}\end{equation}
%for all $i=1,\ldots,N$.
We check that
\[\begin{array}{l}
	\langle x ,B_\ell y\rangle =
	\langle x - \frac{\langle \dot u(t_\ell(y_0)^-,y_0) - \dot u(t_\ell(y_0)^+,y_0),x \rangle}{h_x(u(t_\ell,y_0),y_0)\dot u(t_\ell(y_0)^-,y_0)}
	h_x(u(t_\ell(y_0),y_0),y_0)^T, y\rangle
\end{array}\]
%\[\begin{array}{l}
%	\langle x + \frac{\langle f_i(u(t_{-i}),y_0) - f_{i+1}(u(t_{-i}),y_0), x \rangle}{h_x(u(t_{-i}),y_0)f_{i+1}(u(t_{-i}),y_0)}h_x(u(t_{-i}),y_0)^T,y
%	\rangle = \langle x, B_j y\rangle	\\	\\
%	\langle x - \frac{\langle f_i(u(t_i),y_0) - f_{i+1}(u(t_i),y_0), x \rangle}{h_x(u(t_i),y_0)f_i(u(t_i),y_0)}h_x(u(t_i),y_0)^T,y\rangle =
%	\langle x, B_i y\rangle	
%\end{array}\]
so that
\[
	B_\ell^T x = x - \frac{\langle \dot u(t_\ell(y_0)^-,y_0) - \dot u(t_\ell(y_0)^+,y_0),x \rangle}{h_x(u(t_\ell,y_0),y_0)\dot u(t_\ell(y_0)^-,y_0)}
	h_x(u(t_\ell(y_0),y_0),y_0)^T
%\begin{array}{l}
%	\dis B_j^Tx = x + \frac{\langle f_i(u(t_{-i}),y_0) - f_{i+1}(u(t_{-i}),y_0), x \rangle}{h_x(u(t_{-i}),y_0)f_{i+1}(u(t_{-i}),y_0)}h_x(u(t_{-i}),y_0)^T,	\\
%		\\
%	\dis B_i^Tx = x - \frac{\langle f_i(u(t_i),y_0) - f_{i+1}(u(t_i),y_0), x \rangle}{h_x(u(t_i),y_0)f_i(u(t_i),y_0)}h_x(u(t_i),y_0)^T .
%\end{array}
\]
Then \eqref{redcnd} is equivalent to :
\begin{equation}\label{redcnd1}\begin{array}{l}
	J\dot u(t_\ell^-,y_0)
	%= B_\ell^TJ\dot u(t_\ell^+,y_0)
	=  J\dot u(t_\ell^+,y_0) 	\\
	- \frac{\langle \dot u(t_\ell(y_0)^-,y_0) - \dot u(t_\ell(y_0)^+,y_0),
	J\dot u(t_\ell(y_0)^+,y_0) \rangle}{h_x(u(t_\ell,y_0),y_0)\dot u(t_\ell(y_0)^-,y_0)} h_x(u(t_\ell(y_0),y_0),y_0)^T
\end{array}\end{equation}
%\begin{equation}\label{redcnd1}\begin{array}{l}
%	Jf_i(u(t_{-i}),y_0) + \frac{\langle f_i(u(t_{-i}),y_0) - f_{i+1}(u(t_{-i}),y_0), Jf_i(u(t_{-i}),y_0)\rangle}{h_x(u(t_{-i}),y_0)f_{i+1}(u(t_{-i}),y_0)}
%	h_x(u(t_{-i}),y_0)^T 			\\ 	\qquad = Jf_{i+1}(u(t_{-i}),y_0)		\\	\\
%	Jf_{i+1}(u(t_i),y_0) - \frac{\langle f_i(u(t_i),y_0) -f_{i+1}(u(t_i),y_0) , Jf_{i+1}(u(t_i),y_0) \rangle}{h_x(u(t_i),y_0)f_i(u(t_i),y_0)}
%	h_x(u(t_i),y_0)^T 		\\ 		\qquad = Jf_i^+(u(t_i),y_0)
%\end{array}\end{equation}
or else, as $\langle Jx , x\rangle =0$, %and $J^T=-J$, \eqref{redcnd1} is equivalent to
\begin{equation}\label{redcond2} %\begin{array}{l}
	J\dot u(t_\ell^+,y_0) - J\dot u(t_\ell^-,y_0) = \frac{\langle \dot u(t_\ell(y_0)^-,y_0) ,
	J\dot u(t_\ell(y_0)^+,y_0) \rangle}{h_x(u(t_\ell(y_0),y_0)\dot u(t_\ell(y_0)^-,y_0)}
	h_x(u(t_\ell(y_0),y_0)^T 	%	\\	\\
%	J (f_{i+1}(u(t_i),y_0) - f_i(u(t_i),y_0)) = \frac{\langle Jf_{i+1}(u(t_i),y_0) , f_i(u(t_i),y_0) \rangle}
%	{h_x(u(t_i),y_0)f_i(u(t_i),y_0)}h_x(u(t_i),y_0)^T .
%	\end{array}
\end{equation}
which is \eqref{redcond} with
\[
	\nu_\ell = \frac{\langle \dot u(t_\ell(y_0)^-,y_0) , J\dot u(t_\ell(y_0)^+,y_0) \rangle}{h_x(u(t_\ell(y_0),y_0)\dot u(t_\ell(y_0)^-,y_0)}.
\]
%\[\begin{array}{l}
%	\nu_i^- =  \frac{\langle Jf_{i+1}(u(t_{-i}),y_0) , f_i(u(t_{-i}),y_0) \rangle}{h_x(u(t_{-i}),y_0)f_{i+1}(u(t_{-i}),y_0)}	\\	\\
%	\nu_i^+ =  \frac{\langle Jf_{i+1}(u(t_i),y_0) , f_i(u(t_i),y_0) \rangle}{h_x(u(t_i),y_0)f_i(u(t_i),y_0)} .
%\end{array}\]
On the other hand, if \eqref{redcond} holds, taking the scalar product with $\dot u(t_\ell(y_0),y_0)$ we get
\[\begin{array}{l}
	\nu_\ell \langle h_x(u(t_\ell(y_0),y_0)^T, \dot u(t_\ell(y_0)^-,y_0)\rangle =
	\langle J[\dot u(t_\ell(y_0)^+,y_0)  - \dot u(t_\ell(y_0)^-,y_0)] , \dot u(t_\ell(y_0)^-,y_0)\rangle
%	\dis \frac{\langle Jf_{i+1}(u(t_{-i}),y_0),f_i(u(t_{-i}),y_0)\rangle}{h_x(u(t_{-i}),y_0)f_{i+1}(u(t_{-i}),y_0)} =
%	-\frac{\langle f_{i+1}(u(t_{-i}),y_0), Jf_i(u(t_{-i}),y_0)\rangle}{h_x(u(t_{-i}),y_0)f_{i+1}(u(t_{-i}),y_0)}		\\
%	\dis - \frac{\langle f_{i+1}(u(t_{-i}),y_0) ,Jf_{i+1}(u(t_{-i}),y_0) - \nu_i^- h_x(u(t_{-i}),y_0)^T\rangle}{h_x(u(t_{-i}),y_0)f_{i+1}(u(t_{-i}),y_0)}	\\
%	\dis = \nu_i^- \frac{\langle h_x(u(t_{-i}),y_0)^T , f_{i+1}(u(t_{-i}),y_0)\rangle}{h_x(u(t_{-i}),y_0)f_{i+1}(u(t_{-i}),y_0)} = \nu_i^-
\end{array}\]
that is
\[
	\nu_\ell = \frac{\langle J\dot u(t_\ell(y_0)^+,y_0), \dot u(t_\ell(y_0)^-,y_0)\rangle}{h_x(u(t_\ell(y_0),y_0)\dot u(t_\ell(y_0)^-,y_0)}
\]
%and similarly
%\[
%	\frac{\langle Jf_{i+1}(u(t_i),y_0) , f_i(u(t_i),y_0) \rangle}{h_x(u(t_i),y_0)f_i(u(t_i),y_0)} = \nu_i^+
%\]
and then \eqref{redcnd} follows, given the equivalence between \eqref{redcnd} and \eqref{redcond2}
\end{proof}
\begin{remark} As $J[\dot u(t_\ell(y_0)+,y_0)  - \dot u(t_\ell(y_0)^-,y_0)] $ is orthogonal to
$\dot u(t_\ell(y_0)^+,y_0)  - \dot u(t_\ell(y_0)^-,y_0)$ and $h_x(u(t_\ell(y_0),y_0)^T$ is orthogonal to the tangent space to ${\cal S}_\ell(y_0)$
at $u(t_\ell(y_0),y_0)$, say $T_{u(t_\ell(y_0),y_0)}{\cal S}_\ell(y_0)$, condition \eqref{redcond} is equivalent to the fact that
$\dot u(t_\ell(y_0)^+,y_0)  - \dot u(t_\ell(y_0)^-,y_0)$ belongs to $T_{(u(t_\ell(y_0),y_0,y_0)}{\cal S}_\ell(y_0)$.
\end{remark}
For example, suppose
\[
	h(x,y) = x_k
\]
where either $k=1$ or $k=2$. Recalling that
\[
	f_\ell(x,y) = \begin{pmatrix}
		F_{1,\ell}(x_1,x_2,y)	\\	F_{2,\ell}(x_1,x_2,y)
	\end{pmatrix}
\]
we get, omitting the argument $u(t_\ell(y_0),y_0)$ for simplicity:
\[
	J[\dot u(t_\ell^+(y_0),y_0) - \dot u(t_\ell^-(y_0),y_0)] = \left \{
	\begin{array}{ll}
		\begin{pmatrix}
			F_{2,i-1} - F_{2,i} 	\\ F_{1,i} - F_{1,i-1}
		\end{pmatrix} & \hbox{if $\ell=i$}	\\
		\begin{pmatrix}
			F_{2,j} - F_{2,j+1} 	\\ F_{1,j+1} - F_{1,j}
		\end{pmatrix} & \hbox{if $\ell=j$}
	\end{array} \right .
\]
and then \eqref{redcond} holds if and only if
\[\begin{array}{l}
	F_{k,i}(u(t_\ell),y_0) = F_{k,i-1} (u(t_\ell),y_0), \quad i= 1,\ldots, N-1	\\
	F_{k,j}(u(t_\ell),y_0) = F_{k,j+1} (u(t_\ell),y_0), \quad j=-1,\ldots, -M
\end{array} \]
\medskip

To give a specific example, consider the second order, discontinuous equation with slowly varying coefficients:
\begin{equation}\label{disKL}\begin{array}{ll}
	\ddot u + u(u-a_-(\ep t + \bar y))(1-u) = 0 	& \hbox{if $u<c$}	\\
	\ddot u + u(u-a_+(\ep t + \bar y))(1-u) = 0 	& \hbox{if $u>c$}	\\
\end{array}\end{equation}
where $0<\inf\{a_\pm(y), y\in\R\}<\sup\{a_\pm(y), y\in\R\}<1$.
\medskip

We prove the following
\begin{prop}\label{discKL} Let $a_\pm(y)$ be $C^1$-functions such that
\[
	0<a_+(y)<\sup\{a_+(y):y\in\R\}\le \frac{1}{2}\le \inf\{a_-(y):y\in\R\}<a_-(y)<1
\]
and let $0<c<1$ be a fixed number. Then equation \eqref{disKL} with $\ep=0$ has a family of  $C^1$-solutions $u_\pm(t,y)$ defined for $t\le 0$, $t\ge 0$ resp., bounded together with their derivatives, such that $u_-(0,y)=u_+(0,y)=c$ and
\[
	\lim_{t\to-\infty} u_-(t,y)=0, \quad \lim_{t\to\infty} u_+(t,y)=1
\]
uniformly with respect to $y$. Next, let $D(y) = c^2(2c-3)[a_+(y)-a_-(y)] + a_+(y)-\frac{1}{2}$ and suppose that $y_0$ and $0<c<1$ exist such that
\begin{equation}\label{generic}\begin{array}{l}
	D(y_0) = 0,	\quad D'(y_0) \ne 0 .
\end{array}\end{equation}
Then $\dot u_-(0,y_0)=\dot u_+(0,y_0)$ and there exists $\ep_0>0$ such that for $0<\ep<\ep_0$ there exists a $C^1$-function $y_0(\ep)$,
such that $\lim_{\ep\to 0}y_0(\ep)=y_0$, and a $C^1$-solution $u(t,\ep)$ of equation \eqref{disKL} with $\bar y=y_0(\ep)$, bounded with its
derivatives and such that
\[\begin{array}{l}
	\dis \lim_{\ep\to 0}\sup_{t\in\R_\pm} |u(t,\ep)-u_\pm(t,\ep t + y_0(\ep))| = 0	\\
	\dis \lim_{\ep\to 0}\sup_{t\in\R_\pm} |\dot u(t,\ep)-\dot u_\pm(t,\ep t + y_0(\ep))| = 0
\end{array}\]
\end{prop}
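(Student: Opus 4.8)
The plan is to rewrite \eqref{disKL} as a first--order discontinuous system of the type \eqref{genex}, to verify the hypotheses $A_1)$--$A_5)$ of Theorem~\ref{main}, and to identify the Melnikov condition via Proposition~\ref{nice} and formula \eqref{Delta}. Setting $x_1=u$, $x_2=\dot u$ and $y=\ep t+\bar y$, equation \eqref{disKL} becomes $\dot x_1=x_2$, $\dot x_2=x_1(x_1-1)(x_1-a_\pm(y))$, $\dot y=\ep$, so $n=2$, $m=1$, $g\equiv1$, and the discontinuity function is $h(x,y)=x_1$, the switch taking place on $\{x_1=c\}$; modifying the nonlinearities outside a large ball (harmless, since all relevant orbits stay in a fixed compact set) makes the standing boundedness assumptions hold. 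The heteroclinic orbit we build crosses $\{x_1=c\}$ transversally at $t=0$ (there $\dot x_1=x_2>0$), so one works in the natural extension of the framework described in Remark~\ref{moregenNew}, in which the joining at $t=0$ occurs on a discontinuity surface; the associated jump operator at $t=0$ is $B_0$, given by \eqref{defB*} with $\ell=0$.

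First I would construct $u_\pm(t,y)$ from the phase portraits of the frozen systems. The points $(0,0)$ and $(1,0)$ are hyperbolic saddles with eigenvalues $\pm\sqrt{a_-(y)}$ and $\pm\sqrt{1-a_+(y)}$; since $0<\inf a_+\le a_+(y)\le\sup a_+\le\frac12\le\inf a_-\le a_-(y)<1$, these are real, of opposite sign, and bounded away from the imaginary axis uniformly in $y$, so $A_3)$ holds with $k=1$. Using the energy integral $\frac12\dot u^2+W_\pm(u,y)$ with $W_\pm(u,y)=-\int_0^u s(s-1)(s-a_\pm(y))\,ds$, one checks that for $a_-(y)>\frac12$ the branch of the unstable manifold of $(0,0)$ lying in $\{x_1>0\}$ is increasing in $x_1$ and therefore meets $\{x_1=c\}$; anchoring the time so that this happens at $t=0$ defines $u_-(t,y)$ on $t\le0$ with $u_-(t,y)\to(0,0)$ as $t\to-\infty$. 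Symmetrically, for $a_+(y)<\frac12$ the branch of the stable manifold of $(1,0)$ lying in $\{x_1<1\}$ is increasing and meets $\{x_1=c\}$, defining $u_+(t,y)$ on $t\ge0$ with $u_+(t,y)\to(1,0)$ as $t\to\infty$; the two--sided separation $\sup a_+\le\frac12\le\inf a_-$ is exactly what forces these monotonicities. The parametrised stable/unstable manifold theorem, the transversality of the crossing, and Lemmas~\ref{bddw} and~\ref{Onu+} then yield that $u_\pm$ and their $y$--derivatives are bounded uniformly in $y$ and that $w_0^\pm(y)=u_\pm(0,y)=(c,\dot u_\pm(0,y))$ is a bounded $C^r$ function with $\dot u_\pm(0,y)$ bounded away from $0$; together with $w_\pm=(0,0),(1,0)$, $h(w_+)-c=1-c>0$, $h(w_-)-c=-c<0$, this gives $A_1)$ and $A_2)$.

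The heart of the matter is two short computations. Evaluating the energy integrals at $t=0$ gives $\dot u_-(0,y)^2=-2W_-(c,y)$ and $\dot u_+(0,y)^2=\frac{1-2a_+(y)}{6}-2W_+(c,y)$; since $W_+(c,y)-W_-(c,y)=\frac{c^2(2c-3)}{6}[a_+(y)-a_-(y)]$, subtracting yields $\dot u_-(0,y)^2-\dot u_+(0,y)^2=\frac13 D(y)$. As $\dot u_\pm(0,y)>0$, this shows $D(y_0)=0$ iff $\dot u_-(0,y_0)=\dot u_+(0,y_0)$, which is precisely $A_4)$ (with $x_0=(c,\dot u_+(0,y_0))$), and means that $u(t,y_0):=u_\mp(t,y_0)$ is a heteroclinic of the unperturbed discontinuous system joining $(0,0)$ to $(1,0)$. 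Since $n=2$ and such a heteroclinic exists, ${\cal R}Q_+$ and $B_0\,{\cal N}Q_-$ coincide, both equal to $\span\{\dot u_+(0,y_0)\}$ (use that $B_0$ carries the pre--jump velocity $\dot u_-(0^-,y_0)$ of the heteroclinic to its post--jump velocity $\dot u_+(0^+,y_0)$, as in Proposition~\ref{propudot} with $\ell=0$); hence $d=1$, $A_5)$ holds, and $[{\cal R}Q_++B_0\,{\cal N}Q_-]^\perp$ is spanned by a single vector $\psi$. Because $h=x_1$ and the $x_1$--component of the vector field is the $a_\pm$--independent function $x_2$, the Remark following Proposition~\ref{nice} shows that \eqref{redcond} holds at $\ell=0$, so all the constants $\mu_\ell$ equal $1$ and we may take $\psi(t)=v(t)=J\dot u(t,y_0)$; moreover $a_{11}(t)+a_{22}(t)=\partial_{x_1}x_2+\partial_{x_2}[x_1(x_1-1)(x_1-a_\pm)]\equiv0$, so the Melnikov quantity is given by \eqref{Delta}. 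With $F_{1,y}\equiv0$, $F_{2,y}=-x_1(x_1-1)a_\pm'(y)$, and $u_1$ increasing from $0$ to $1$ along $\Gamma$ (using $a_-'$ on $\{u_1<c\}$ and $a_+'$ on $\{u_1>c\}$), this becomes
\[
\Delta=-a_-'(y_0)\int_0^c s(s-1)\,ds-a_+'(y_0)\int_c^1 s(s-1)\,ds=\frac16\,D'(y_0),
\]
using $\int_0^c s(s-1)\,ds=\frac{c^2(2c-3)}{6}$ and $\int_c^1 s(s-1)\,ds=-\frac{1+c^2(2c-3)}{6}$. Hence the Melnikov condition $\Delta\ne0$ of Theorem~\ref{main} is exactly $D'(y_0)\ne0$, i.e.\ \eqref{generic}.

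Finally I would quote Theorem~\ref{main}: with $m-d=0$ the bifurcation function ${\cal M}(\al_+,\ep)$ of its proof is scalar, ${\cal M}(y_0,0)=0$, and ${\cal M}_{\al_+}(y_0,0)$ is a nonzero multiple of $\Delta$, so the implicit function theorem produces $\ep_0>0$ and a $C^1$ function $y_0(\ep)\to y_0$ with ${\cal M}(y_0(\ep),\ep)=0$; the corresponding continuous, piecewise $C^r$ solution of the first--order system with $\bar y=y_0(\ep)$ has first component $u(t,\ep)=x_1(t,\ep)$, the asserted $C^1$ solution of \eqref{disKL}, the $u$-- and $\dot u$--convergences being the $x_1$-- and $x_2$--components of \eqref{propsol-A} together with Remark~\ref{remrem}. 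The step I expect to be the main obstacle is the careful uniform--in--$y$ verification of $A_1)$--$A_3)$ --- in particular that the relevant branches of the invariant manifolds of $(0,0)$ and $(1,0)$ reach $\{x_1=c\}$ for \emph{every} $y$ and depend smoothly and boundedly on $y$ --- together with the bookkeeping that places the $t=0$ crossing inside the discontinuous setting of Section~2 through $B_0$; by comparison the energy identity and the Melnikov integral are elementary once the geometry is fixed.
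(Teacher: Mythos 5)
Your proposal is correct in outline and reproduces the paper's decisive computations: the energy identities giving $\dot u_-(0,y)^2-\dot u_+(0,y)^2=\tfrac13 D(y)$ (so $A_4)$ holds exactly when $D(y_0)=0$), the observation that $h=x_1$ with common first component $x_2$ puts you in the trace--free situation of Propositions \ref{solAd2dim}--\ref{nice}, so that $\psi(t)=J\dot u(t,y_0)$ and \eqref{Delta} applies, the evaluation $\Delta=\tfrac16D'(y_0)$, and the final appeal to Theorem \ref{main} with $n=2$, $m=d=1$. This is precisely the paper's route for the bifurcation part. Where you genuinely diverge is the construction of $u_\pm(t,y)$ and the uniform-in-$y$ statements: the paper integrates the frozen equation in closed form (see \eqref{impl_0} and \eqref{defu-}), reads off positivity, monotonicity, boundedness and the uniform limits directly from the explicit expression, and reduces $u_+$ to $u_-$ via $v(t)=1-u_+(-t)$, $a\mapsto 1-a$; this explicit approach also yields the sharper quantitative information exploited in Section 8 when $a_-$ is allowed below $\tfrac12$. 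Your qualitative route (energy monotonicity plus uniformly hyperbolic saddles and the parametrized invariant-manifold theorem) is viable for the proposition as stated, but loses that extra information.

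The one soft spot is exactly the step you flag: the uniform-in-$y$ convergence required by $A_2)$ (and asserted in the statement) cannot be obtained by quoting Lemma \ref{Onu+}, because that lemma \emph{assumes} $A_1)-A_3)$, hence the uniform limits; as written your verification of $A_2)$ is circular. You would need to argue it directly, which is routine in your framework: the eigenvalues $\pm\sqrt{a_-(y)}$, $\pm\sqrt{1-a_+(y)}$ are bounded away from $0$ uniformly in $y$ (they are at least $1/\sqrt2$ in modulus), so the local unstable/stable manifolds have uniform size and uniform exponential rates, while the transit time from those neighbourhoods to the section $x_1=c$ is uniformly bounded because on $\delta\le u\le c$ the energy identity gives $\dot u^2=\tfrac16u^2\bigl(3u^2-4(a+1)u+6a\bigr)\ge\tfrac12\delta^2(1-c)^2$ for all $a\in[\tfrac12,1]$ (the quadratic is increasing in $a$ and equals $3(1-u)^2$ at $a=\tfrac12$). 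With that supplied your argument is complete. Finally, the bookkeeping of the $t=0$ crossing through a jump operator $B_0$ is harmless but unnecessary: at $y=y_0$ the condition $D(y_0)=0$ makes $\dot u$ continuous at $t=0$, so $B_0=\I$ and one lands back on the paper's condition ${\cal R}Q_+={\cal N}Q_-=\Span\{\dot u(0,y_0)\}$.
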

\begin{proof} Writing $x = \begin{pmatrix} u \\ \dot u \end{pmatrix}$, $y=\ep t + \bar y$, equation \eqref{disKL} reads
\begin{equation}\label{ex2d}
	\dot x = f(x,y) = \begin{pmatrix}
		x_2	\\	x_1(x_1-a(x_1,y))(x_1-1)
	\end{pmatrix},\quad \dot y = \ep
\end{equation}
where
\[
	a(x_1,y) = \left \{\begin{array}{ll}
		a_-(y)	&	\hbox{if $x_1<c$}		\\
		a_+(y)	&	\hbox{if $x_1>c$.}
	\end{array}\right .
\]
Then $h(x,y)=x_1$ and
\[
	f_\pm(x,y) = \begin{pmatrix}
		x_2	\\	x_1(x_1-a_\pm(y))(x_1-1)
	\end{pmatrix} .
\]
Note that
\[
	f_+(x,y) - f_-(x,y) = \begin{pmatrix}
		0	\\	(a_-(y) - a_+(y))x_1(x_1-1)
	\end{pmatrix}
\]
is tangent to the manifold $h(x,y)=c$ and \eqref{disKL} has the fixed points $w_-(y)=(0,\; 0)^T$ and $w_+(y)=(1,\; 0)^T$.
\medskip

Let \eqref{ex2d}$_0$ be equation \eqref{ex2d} with $\ep=0$ and $(u_\pm(t,y),\dot u_\pm(t,y))$, $t\in\R_\pm$, be solutions of \eqref{ex2d}$_0$ such that
\[\begin{array}{l}
	\dis\lim_{t\to-\infty}(u_-(t,y),\dot u_-(t,y))=(0,\; 0),	\\
	\dis\lim_{t\to\infty}(u_+(t,y),\dot u_+(t,y))=(1,\; 0).
\end{array}\]
Multiplying \eqref{disKL} (with $\ep=0$) by $\dot u_-(t,y)$ and integrating from $-\infty$ to $t<0$ we get
\[\begin{array}{l}
	\dis \frac{1}{2}\dot u_-(t,y)^2 = \int_0^{u_-(t,y)} u(u-a_-)(u-1)du 	\\
	\dis \qquad = \frac{u_-(t,y)^2 (3u_-(t,y)^2 -4(a_-(y)+1)u_-(t,y) + 6 a_-(y)}{12}
\end{array}\]
hence, if $u_-(t,y)>0$, it satisfies
\begin{equation}\label{eqmin}
	\frac{du}{dt} =  \frac{u\sqrt{3u^2 -4(a_-(y)+1)u + 6 a_-(y)}}{\sqrt{6}}.
\end{equation}
It is easy to check that, for $1<2a_-(y)<2$ we have $3u^2 -4(a_-(y)+1)u + 6 a_-(y)>0$.

To simplify notation in the following we write $a_-$ for $a_-(y)$. Integrating \eqref{eqmin} we get (see \cite[eq. 2.266]{GR})
\begin{equation}\label{impl_0}\begin{array}{l}
	\sinh^{-1}\left (\frac{\sqrt{2}[3a_--(a_-+1)u_-(t,y)]}{u_-(t,y)\sqrt{(2-a_-)(2a_--1)}} \right ) =
	\sinh^{-1}\left (\frac{\sqrt{2}[3a_--(a_-+1)c]}{c\sqrt{(2-a_-)(2a_--1)}} \right ) - t\sqrt{a_-} .
\end{array}\end{equation}
where $c=u_-(0,y)$. Then, using $\sinh(\al-\be)=\sinh\al\cosh\be - \sinh\be\cosh\al$ and $\cosh\al = \sqrt{1+\sinh^2\al}$:
\[\begin{array}{l}
	\frac{3a_-\sqrt{2}}{u_-(t,y)\sqrt{(2-a_-)(2a_--1)}} =
	\sinh\left (\sinh^{-1}\left (\frac{\sqrt{2}[3a_- - (a_-+1)c]}{c\sqrt{(2-a_-)(2a_--1)}} \right ) - t\sqrt{a_-} \right ) +
	\frac{(a_-+1)\sqrt{2}}{\sqrt{(2-a_-)(2a_--1)}}		\\
	= \sinh\left (\sinh^{-1}\left (\frac{\sqrt{2}[3a_- - (a_-+1)c]}{c\sqrt{(2-a_-)(2a_--1)}} \right ) \right )\cosh (t\sqrt{a_-}) 		\\
		\qquad - \sinh(t\sqrt{a_-})\cosh\left ( \sinh^{-1}\left (\frac{\sqrt{2}[3a_- - (a_-+1)c]}{c\sqrt{(2-a_-)(2a_--1)}}\right )\right )	
		+ \frac{(a_-+1)\sqrt{2}}{\sqrt{(2-a_-)(2a_--1)}}	\\
	= \frac{\sqrt{2}[3a_- - (a_-+1)c]}{c\sqrt{(2-a_-)(2a_--1)}}\cosh(t\sqrt{a_-}) -
	\sqrt{1+\left (\frac{\sqrt{2}[3a_- - (a_-+1)c]}{c\sqrt{(2-a_-)(2a_--1)}}\right )^2}\sinh(t\sqrt{a_-}) + \frac{(a_-+1)\sqrt{2}}{\sqrt{(2-a_-)(2a_--1)}}		\\
	= \frac{\sqrt{2}[3a_- - (a_-+1)c]}{c\sqrt{(2-a_-)(2a_--1)}}\cosh(t\sqrt{a_-}) -
	\sqrt{\frac{3a_-(3c^2-4(a_-+1)c+6a_-)}{c^2(2-a_-)(2a_--1)}}\sinh(t\sqrt{a_-}) + \frac{(a_-+1)\sqrt{2}}{\sqrt{(2-a_-)(2a_--1)}}
\end{array}\]
Hence
\begin{equation}\label{defu-}
	\frac{1}{u_-(t,y)} = \mu_1(a_-)\cosh(t\sqrt{a_-}) - \mu_2(a_-)\sinh(t\sqrt{a_-}) + \frac{a_-+1}{3a_-}.
\end{equation}
where
\begin{equation}\label{defmu_i}
	\begin{array}{l}
		\mu_1(a) = \frac{3a - (a+1)c}{3ac} = \frac{1}{c} -\frac{1}{3}\left ( 1 + \frac{1}{a}\right ).	 	\\
		\mu_2(a) = \sqrt{\frac{3c^2-4(a+1)c+6a}{6ac^2}} = \sqrt{\frac{1}{a}\left ( \frac{1}{2} +\frac{2}{3c}\right )
		+\frac{1}{c^2}-\frac{2}{3c}}
	\end{array}
\end{equation}
We pause for a while to observe that $\mu_2(a)$ is well defined if $3c^2-4(a+1)c+6a\ge 0$. This holds for sure if $\frac{1}{2}\le a<1$. However, if
$0<a<\frac{1}{2}$ we need that
\[
	3c<2(a+1) - \sqrt{4a^2-10 a+4}.
\]
We will consider this issue in Section 8.

As $a_-(y)>0$ and its derivative are bounded on $\R$, so are $\mu_1(a_-(y))$ and $\mu_2(a_-(y))$, Moreover, from \eqref{defu-} it follows that
$u_-(t,y)$ is $C^1$ in $(t,y)$.

We have
\[\begin{array}{l}
	\frac{1}{|u_-(t,y)|} \ge  |\mu_1(a_-)\cosh(t\sqrt{a_-}) - \mu_2(a_-)\sinh(t\sqrt{a_-}) |	\\
	= \frac{1}{2} | (\mu_1(a_-)-\mu_2(a_-))e^{t\sqrt{a_-}} + (\mu_1(a_-)+\mu_2(a_-))e^{-t\sqrt{a_-}}) |		\\
	\ge \frac{1}{2} |(\mu_1(a_-)+\mu_2(a_-))e^{-t\sqrt{a_-}}) | -  \frac{1}{2} | (\mu_1(a_-)-\mu_2(a_-))e^{t\sqrt{a_-}}  |
\end{array}\]
As
\begin{equation}\label{relmu}\begin{array}{l}
	(\mu_2(a) +\mu_1(a))(\mu_2(a)-\mu_1(a) = \mu_2(a)^2 - \mu_1(a)^2 = \frac{(2a - 1)(2-a)}{18a^2} >0
\end{array}\end{equation}
for $1<2a\le 2$, we see that $\mu_2(a) +\mu_1(a)$ and $\mu_2(a)-\mu_1(a)$ do not change sign in  $1<2a\le 2$. But, since for $a=1$ we have
\[
	\mu_2(1) = \frac{1}{c}-\frac{2}{3} = \frac{3-c}{3c} >0, \quad
	\mu_1(1) = \frac{1}{c}\sqrt{\frac{3c^2-8c+6}{6}}>0,
\]
we see that $\inf_{1<2a\le 2}[\mu_2(a)+\mu_1(a)]>0$. Moreover $\mu_1(\frac{1}{2})=\frac{1}{c}-1$ and
$\mu_2(\frac{1}{2})=\sqrt{\frac{c^2-2c+1}{c^2}} = \frac{1-c}{c}>0$ and that $\mu_1(\frac{1}{2})+\mu_2(\frac{1}{2})=2\frac{1-c}{c}>0$.
So
\[
	\inf_{1\le 2a \le 2}[\mu_2(a)+\mu_1(a)]>0
\]
and then
\[
	\frac{1}{u_-(t,y)} \ge \frac{1}{2}\inf_{y\in\R}\{ \mu_2(a_-) +\mu_1(a_-)\} e^{-t\inf_{y\in\R}a_-}.
\]
As a consequence $u_-(t,y)>0$ for $t\le 0$ and
\[
	\lim_{t\to-\infty}u_-(t,y) = 0
\]
uniformly with respect to $y\in\R$. From \eqref{eqmin} we also see that $\dot u_-(t,y)>0$ and $\dis\lim_{t\to-\infty}\dot u_-(t,y)=0$ uniformly with respect to $y$. In particular, for $t\le 0$, $u_-(t,y), \dot u_-(t,y)$ are bounded, positive, functions and $u_-(t,y)$ is strictly increasing from $0$ to
$u_-(0,y)=c$.
\medskip

Now, we look for a (strictly) increasing solution of the second equation in \eqref{disKL} on $t\ge0$. To this end we observe that, $u_+(t,y)$ is a (strictly) increasing solution, on $t\ge 0$, of the second equation in \eqref{disKL} such that
\[
	u_+(0,y)=c, \quad \lim_{t\to\infty}u_+(t,y)=1
\]
if and only if $v(t)=1-u_+(-t)$ is a (strictly) increasing solution, on $t\le 0$, of
\[
	\ddot v + v(v -(1-a_+))(1-v) = 0
\]
such that
\[
	v(0)=1-c, \quad \lim_{t\to-\infty}v(t)=0.
\]
Since $0<a_+(y)\le \frac{1}{2}$ is equivalent to $\frac{1}{2}\le 1-a_+(y)<1$, from the previous part we conclude that the second equation in \eqref{disKL} has a unique strictly increasing solution $u_+(t,y)$ such that
\[
	u_+(0,y)=c, \quad \lim_{t\to\infty}u_+(t,y) = 1
\]
the limit being uniform with respect to $y\in\R$.

Then for any $y\in\R$, equation \eqref{disKL} with $\ep=0$ has a pair of solutions $(u_-(t,y),\dot u_-(t,y))$, $(u_+(t,y),\dot u_+(t,y))$
defined for $t\le 0$ and $t\ge 0$ resp., such that $u_\pm(t)$ are increasing in their interval of definition and
\[\begin{array}{l}
	0<u_-(t,y)<c, \;\hbox{for $t<0$ and  $u_-(0,y)=c$}			\\
	c<u_+(t,y)<1, \;\hbox{for $t>0$ and  $u_+(0,y)=c$}			\\
	\dis \lim_{t\to-\infty}u_-(t,y) = \lim_{t\to\infty}1-u_+(t,y) = 0	\\
	\dis \lim_{t\to-\infty}\dot u_-(t,y) = \lim_{t\to\infty}\dot u_+(t,y) = 0
\end{array}\]
uniformly with respect to $y$.

Now, since $\dis\lim_{t\to-\infty} (u_-(t,y),\dot u_-(t,y))= (0,0)$ and $\dis\lim_{t\to\infty} (u_+(t,y),\dot u_+(t,y))= (1,0)$ we get:
\begin{equation}\label{cndatinf}
\begin{array}{l}
	\dis \frac{1}{2}\dot u_-(0,y)^2 = \int_0^c u(u-a_-(y))(u-1)du		\\
	\dis \frac{1}{2}\dot u_+(0,y)^2 = \int_c^1 u(u-a_+(y))(1-u)du.
\end{array}
\end{equation}
So equation \eqref{disKL}, with $\ep=0$, has a $C^1$ solution heteroclinic to the fixed points $w_-(y)$ and $w_+(y)$ if and only if
\[
	\int_0^c u(u-a_-(y))(u-1)du +  \int_c^1 u(u-a_+(y))(u-1)du = 0.
\]
It is easy to check that
\begin{equation}\label{one}
	\int_0^c u(u-a_-(y))(u-1)du +  \int_c^1 u(u-a_+(y))(u-1)du = \frac{1}{6}D(y).
\end{equation}
and hence equation \eqref{disKL} has a $C^1$ solution heteroclinic to the fixed points $w_-(y)$ and $w_+(y)$ if and only if
\begin{equation}\label{join}
	D(y) = 0.
\end{equation}
From \eqref{generic} we see that equation \eqref{join} has a unique solution only for $y=y_0$.

Then equation \eqref{ex2d}$_0$ has a unique $C^1$ solution $u_0(t)=u(t,y_0)$ asymptotic to $u= 0$ as $t\to-\infty$ and to $u = 1$ as $t\to\infty$ only for $y=y_0$ and this solution breaks when $y\ne y_0$ (in the sense that it is no longer $C^1$). Since $f_+(x,y) - f_-(x,y)$ is tangent to the manifold $h(x,y)=c$ and
\[
	f_y(u_0(t),\dot u_0(t),y) = \left \{\begin{array}{ll} \begin{pmatrix}
		0	\\	-a_-'(y_0)u_0(t)(u_0(t)-1)
	\end{pmatrix} & \hbox{if $t\le 0$}	\\	\\
		\begin{pmatrix}
		0	\\	-a_+'(y_0)u_0(t)(u_0(t)-1)
	\end{pmatrix} & \hbox{if $t>0$}
	\end{array}\right .
\]
the Melnikov function reads:
\[\begin{array}{l}
	\dis -\int_{-\infty}^0 a_-'(y_0)\dot u_0(t)u_0(t)(u_0(t)-1) dt - \int_0^\infty a_+'(y_0)\dot u_0(t)u_0(t)(u_0(t)-1) dt		\\
	= \dis -a_-'(y_0)\int_0^c u(u-1) du - a_+'(y_0)\int_c^1 u(u-1) du = \frac{1}{6}D'(y_0) \ne 0.	%\\	
\end{array}\]
From Theorem \ref{main} the existence follows of $0<\rho<1$ and $\ep_0>0$ such that for $0<\ep<\ep_0$ there exist a
$C^r$-function $y_0(\ep)$ with $\lim_{\ep\to 0} y_0(\ep)=y_0$ and a solution $\tilde u(t,\ep)$ of \eqref{ex2d} such that
\begin{equation}\label{cont}\begin{array}{l}
	\sup_{t\in\R}|\tilde u(t,\ep)-u(t,\ep t+y_0(\ep))| + \sup_{t\in\R}|\dot{\tilde u}(t,\ep)-\dot u(t,\ep t+y_0(\ep))| < \rho,	\\
	\lim_{\ep\to 0} \{\sup_{t\in\R}|\tilde u(t,\ep)-u(t,\ep t+y_0(\ep))| + \sup_{t\in\R}|\dot {\tilde u}(t,\ep)-\dot u(t,\ep t+y_0(\ep))|\} = 0.
\end{array}\end{equation}
\end{proof}

\begin{remark} Suppose that $a_-(y)-a_+(y)=\frac{1}{2}$ with $0<a_+(y)<\frac{1}{2}$. Then \eqref{generic} reads:
\[\begin{array}{l}
	2a_+(y_0) = 2c^3 - 3c^2+1 		\\
	a'_+(y_0) \ne 0.
\end{array}
\]
Note that $0<2c^3 - 3c^2+1<1$ for $0<c<1$.
\end{remark}
\section{Concluding remark}
The assumption $0<a_+(y)\le \frac{1}{2}\le a_-(y)<1$ can be slightly weakened. Indeed, suppose that
\[
	0<a_{\min}=\min\{a_-(y):y\in\R\}\le a_-(y)<1
\]
where $a_{\min}<\frac{1}{2}$.

By uniqueness of analytical continuation, the function defined in \eqref{defu-} is a solution of the first equation in \eqref{discKL} (with $\ep=0$) for any value of $a_-=a_-(y)$ for which $\mu_2(a_-)$ has a meaning, that is such that
\begin{equation}\label{CR1}
	3c^2-4(a_-+1)c+6a_-\ge 0
\end{equation}

Moreover to prove that $u_-(t,y)\to 0$ as $t\to -\infty$, uniformly with respect to $y$, following the same argument of Proposition \ref{discKL}, we need that $\mu_1(a)+\mu_2(a)>0$ that is\begin{equation}\label{CR2}
	3a-(a+1)c + 3a\sqrt{\frac{3c^2-4(a+1)c+6a}{6a}} > 0.
\end{equation}
We prove that \eqref{CR1} and \eqref{CR2} hold if and only if
\begin{equation}\label{cndonc}
	c<\frac{1}{3} \left ( 2(a_{\min}+1) - \sqrt{4a_{\min}^2 - 10a_{\min} + 4}\right ).
\end{equation}
\medskip

As the function of $a$: $3c^2-4(a+1)c+6a$ is increasing, for $2c<3$, \eqref{CR1} holds if and only if
\[
	3c^2-4(a_{\min}+1)c+6a_{\min}\ge 0
\]
that is
\[
	\hbox{either} \; c\le \frac{2(a_{\min}+1)-\sqrt{2\Delta}}{3} \; \hbox{ or }\; c\ge \frac{2(a_{\min}+1)+\sqrt{2\Delta}}{3}
\]
where $\Delta=2a_{\min}^2-5a_{\min}+2$. However it is easy to check that, for $a_{\min}<\frac{1}{2}$, $\frac{2(a_{\min}+1)+\sqrt{2\Delta}}{3}>1$.
Hence \eqref{CR1} is equivalent to
\begin{equation}\label{CR4}
	c\le \frac{2(a_{\min}+1)-\sqrt{2\Delta}}{3}
\end{equation}
Next, the function of $a$
\[
	\mu(a) := (a+1)c - \sqrt{\frac{3a}{2}}\sqrt{3c^2-4(a+1)c+6a} - 3a
\]
is convex and its values at $a=a_{\min}<\frac{1}{2}$ and $a=\frac{1}{2}$ are
\[\begin{array}{l}
	\mu(a_{\min}) = (a_{\min}+1)c - \sqrt{\frac{3a_{\min}}{2}}\sqrt{3c^2-4(a_{\min}+1)c+6a_{\min}} - 3a_{\min}	\\
	\mu(\frac{1}{2}) = \frac{3}{2}c - \sqrt{\frac{3}{4}}\sqrt{3c^2-6c+3} = \frac{3}{2}(c-|c-1|) = \frac{3}{2}(2c-1) - 3 = 3c-\frac{9}{2} <0
\end{array}\]
(since $c<1$). Condition $\mu(a_{\min})<0$ for $a_{\min}\le \frac{1}{2}$ is equivalent to:
\begin{equation}\label{CR5}
	c < \frac{3a_{\min}}{a_{\min}+1}
\end{equation}
Then \eqref{CR2} hods if and only if \eqref{CR5} holds. So we only need to prove that \eqref{CR4} implies \eqref{CR5}.

However the function
\[
	\mu(a) := \frac{9a}{a+1} - 2(a+1) + \sqrt{4a^2 - 10a + 4}
\]
is concave in $0<a<\frac{1}{2}$ and $\mu(0)=\mu(\frac{1}{2})=0$. So $\mu(a)>0$ on $0<a<\frac{1}{2}$ and then
\[
	\frac{2(a_{\min}+1)-\sqrt{2\Delta}}{3} <  \frac{3a_{\min}}{a_{\min}+1}.
\]

So, assuming \eqref{cndonc} the fact that  $\lim_{t\to-\infty}u_-(t,y)=0$ uniformly with respect to $y\in\R$ goes as in Proposition \ref{discKL}.
\medskip

Next, the argument given to prove the existence of $u_+(t,y)$ shows that such a solution exists if $0<a_+(y)<1$ and
\begin{equation}\label{2ndcnd}
	3(1-c)<2(1-a_{\max}+1) - \sqrt{4(1-a_{\max})^2 - 10(1-a_{\max}) + 4}
\end{equation}
where we assume that $a_{\max} := \sup\{a_+(y): y\in\R\}\ge \frac{1}{2}$ . So in order to have both $u_-(t,y)$ and $u_+(t,y)$ we need that
\[
	2a_{\max} -1 + \sqrt{4a_{\max}^2 +2a_{\max}-2} < 3c < 2(a_{\min}+1) - \sqrt{4a_{\min}^2 - 10a_{\min} + 4}.
\]
For example Let $a_{\min} = \frac{1}{2}-\kappa$ and $a_{\max} = \frac{1}{2}+\kappa$, $\kappa>0$. Then
\[
	2(a_{\min}+1) - \sqrt{4a_{\min}^2 - 10a_{\min} + 4} = 3 -2\kappa - \sqrt{2\kappa(2\kappa+3)}
\]
and similarly:
\[
	2a_{\max} -1 + \sqrt{4a_{\max}^2 +2a_{\max}-2} = 2\kappa + \sqrt{2\kappa(2\kappa+3)}.
\]
Then, the set of those $c$ satisfying both \eqref{cndonc} and \eqref{2ndcnd} is not empty if and only if
\[
	2\kappa + \sqrt{2\kappa(2\kappa+3)} < 3 -2\kappa - \sqrt{2\kappa(2\kappa+3)}
\]
or, equivalently, $0<\kappa<\frac{3}{16}$. We conclude this section giving a geometrical interpretation of \eqref{CR4}. Equation $\ddot u =
u(u-a_{\min})(1-u)$ has a homoclinic orbit to $(u,\dot u)=(0,0)$ that intersects the $u$-axis ($\dot u = 0$) at the point $u=\bar u$ where $\bar u$
is the right hand side of \eqref{CR4}. So, if \eqref{CR4} does not hold the portion of the unstable manifold of the fixed point $(0,0)$ of equation
$\ddot u = u(u-a_{\min})(1-u)$ such that $u\ge 0$, lies entirely on the left of the line $u=c$. Hence we cannot have heteroclinic solutions of the discontinuous equation \eqref{discKL} joining $(0,0)$ with $(1,0)$ and such that $0<u(t)<1$.

\section{Proof of Lemma \eqref{Onu+}}

In this appendix we give the proof of Lemma \eqref{Onu+} for $u^+_N(t,y)$, the proof for  $u^-_{-M}(t,y)$ being similar. Let $0<\rho\le 1$. Since $u^+_N(t,y)\to w_+(y)$ uniformly with respect to $y$, there exists $T\ge T_+$ such that
\[
	\sup_{t\ge T}|u^+_N(t,y)-w_+(y)|\le \rho.
\]
First we prove that $u^+_N(t,y)$ satisfies the statement of the Lemma for $t_N^+(y)\le t\le T$. Indeed, for such values of $t$ we have
\[
	u^+_N(t,y) = w^+_N(y) + \int_{t_N(y)}^t f_N(u(s,y),y) ds
\]
As $w^+_N(y)$, $t_N(y)$ and $f_N(x,y)$ are bounded together with their derivatives we see that, for any $k=0,\ldots,n$ there exists a constant $M_k>0$, independent of $y$, such that
\[
	\sup_{t_N(y)\le t\le T}\left |\frac{\partial^k u^+_N}{\partial y^k}(t,y)\right |\le M_k.
\]
We conclude that
\[
	\sup_{t\ge t_N(y)}|u^+_N(t,y)|\le \max\{M_0,\sup\{w_+(y)|y\in\R^n\}+1\}
\]
which is independent of $y\in\R^m$. Now we prove that $u^+_{N,y}(t,y)$ is bounded uniformly with respect to $y\in\R^m$. Let $\bar y\in\R^m$ be fixed, $C^0_b(\R_+)$ be the space of bounded continuous functions on $\R_+$ and $U_0(t,y)$ be the fundamental matrix of
$\dot x = f_{N,x}(w_+(y),y)x$.

Arguing as in the proof of the parametric stable Theorem (see \cite[p. 18]{P1}), there exists $\tilde\rho>0$, $\mu>0$ and $0<\Delta\ll 1$ such that the map
\[\begin{array}{l}
	(\xi,y,w(t)) \mapsto U_0(t,\bar y)P_+^0(\bar y)\xi  	\\
	\dis +\int_0^t U_0(t,\bar y)P_+^0(\bar y)U_0(s,\bar y)^{-1} [f_N(w^+(y)+w(s),y) -f_{N,x}(w^+(\bar y),\bar y)w(s)]ds \\
	\dis -\int_t^\infty U_0(t,\bar y)(\I-P_+^0(\bar y))U_0(s,\bar y)^{-1} [f_N(w^+(y)+w(s),y) -f_{N,x}(w^+(\bar y),\bar y)w(s)]ds
\end{array}\]
where $\xi\in {\cal R}P^0_+(\bar y)$, $|\xi|<\tilde\rho$, $|y-\bar y|<\mu$ is a $C^r$-contraction on the space of bounded continuous functions $w(t)\in C^0_b(\R_+)$ with $\sup_{t\ge 0}|w(t)|\le\Delta$, uniform with respect to $(\xi,y)$. Let $z(t,\xi,y)$ be the fixed point of such a contraction. Then the map $(\xi,y)\mapsto z(t,\xi,y)$, $|\xi|\le \tilde\rho$, $|y-\bar y|<\mu$, is a $C^r$-map into the space of bounded continuous functions on $\R_+$. In particular all derivatives of $z(t,\xi,y)$ are bounded functions (but the bounds may depend on $(\xi,y)$). Now, $x(t)= z(t,\xi,y)+w^+(y)$ satisfies
\begin{equation}\label{par}\begin{array}{l}
	\dot x = f_N(x,y)		\\
	P^0_+(\bar y)[x(0)-w^+(y)]=\xi	\\
	\sup_{t\ge 0}|x(t)-w^+(y)|\le \Delta.
\end{array}\end{equation}
To proceed with the proof we modify $\rho$, and hence $T$, so that $0<K\rho\le\tilde\rho$ and the previous conditions still hold. As
$u^+_N(t+T,y)$ satisfies \eqref{par} with $\xi = P^0_+(\bar y)[u^+_N(T,y)-w^+(y)]$ and
\[
	|P^0_+(\bar y)[u^+_N(T,y)-w^+(y)]|\le K\rho \le \tilde\rho
\]
we conclude, by uniqueness of the fixed point, that
\[
	u^+_N(t+T,y) = w^+(y) + z(t,P^0_+(\bar y)[u^+_N(T,y)-w^+(y)],y).
\]
As $\bar y\in\R^m$ is arbitrary we see that $u^+_{N,y}(t+T,y)$ is bounded on $t\ge 0$. So
\[
	\sup_{t\ge t_N(y)} |u^+_{N,y}(t,y)| <\infty
\]
however the bound may depend on $y$. To prove that it can be taken independent of $y$ we observe that, for $t\ge 0$, $u^+_N(t+T_+,y)$ is a bounded solution of
\[\begin{array}{l}
	\dot x = f_{N,x}(u^+_N(t+T_+,y),y)x + f_{N,y}(u^+_N(t+T_+,y),y)	\\
	P_+(y)x(0) = P_+(y)u^+_{N,y}(T_+,y)
\end{array}\]
and $|u^+_{N,y}(T_+,y)|\le M_1$ since $0\le T_+\le T$. Now, \eqref{variat+} has an exponential dichotomy on $\R_+$ with projections $P_+(y)$ and its fundamental matrix on $\R_+$ is $X(t,y):=U_N^+(t+T_+,y)U_N^+(T_+,y)^{-1}$. Hence,
\[\begin{array}{l}
	\dis u^+_{N,y}(t+T_+,y) = X(t,y)P_+(y)u^+_{N,y}(T_+,y) 	\\
	\dis + \int_0^t X(t,y)P_+(y)X(s,y)^{-1}f_{N,y}(u^+_{N,y}(s+T_+,y),y)	ds	\\
	\dis - \int_t^\infty X(t,y)\I-P_+(y))X(s,y)^{-1}f_{N,y}(u^+_{N,y}(s+T_+,y),y) ds
\end{array}\]
and then, setting $\bar F=\sup_{(x,y)}|f_{N,y}(x,y)|$:
\[
	|u^+_{N,y}(t+T_+,y)| \le Ke^{-\de t}M_1 + \int_0^\infty K\bar Fe^{-\de|t-s|}ds \le K(M_1 + 2\bar F\de^{-1})
\]
which is independent of $y\in\R^m$. Since $T\ge T_+$ and $K\ge 1$, we obtain
\[
	\sup_{t\ge t_N(y)} |u^+_{N,y}(t,y)| \le K(M_1 + 2\bar F\de^{-1}).
\]
More arguments of similar nature prove the Lemma as far as the higher order derivatives of $u^+_N(t,y)$ are concerned. This completes the proof of Lemma \ref{Onu+}.
\medskip

\begin{remark}
We can also give a better estimate of the difference $w(t,y):=u^+_N(t+T,y)-w_+(y)$. Indeed here we prove that for $\rho>0$ sufficiently small,
there exists $\be>0$ such that $w(t,y)e^{\be t}$ and its derivatives with respect to $y$ are bounded on $t\ge 0$, uniformly with respect to $y\in\R^m$.

First, as $\dot w(t,y) = f_N(w(t,y)+w_+(y),y)$ and $f_N(w_+(y),y)=0$, we see that $w(t,y)$ is a bounded solution of
\[
	\dot x = f_{N,x}(w_+(y),y)x + b(x,y)
\]
where $b(x,y) = f_N(x+w_+(y),y) - f_N(w_+(y),y) - f_{N,x}(w_+(y),y)x$. Note that
\[
	|b(x,y)| \le \frac{1}{2}L|x|^2
\]
where $L$ is a Lipschitz constant for $f_{N,x}(x,y)$. Then
\[\begin{array}{l}
	w(t,y) = U_0(t,y)P^0_+(y)w(0,y) + 	\\
	\dis \quad \int_0^t U_0(t,y)P^0_+(y)U_0(s,y)^{-1} b(w(s,y),y) ds -	\\
	\dis \quad \int_t^\infty U_0(t,y)(\I-P^0_+(y))U_0(s,y)^{-1} b(w(s,y),y)ds
\end{array}\]
where $U_0(t,y)$ is the fundamental matrix of $\dot x = f_{N,x}(w_+(y),y)x$. Then, using $\sup_{t\ge 0}|w(t,y)|<\rho$, with $\rho$ as in Lemma \ref{Onu+}, we get:
\[
	|w(t,y)|\le Ke^{-\de_0t}\rho + \frac{1}{2}KL\rho \int_0^\infty e^{-\de_0|t-s|} |w(s,y)|ds .
\]
Let $\rho>0$ be such that it also satisfies $KL\rho<\de_0$ and take
\[\begin{array}{l}
	\th=KL\rho\de_0^{-1}		\\	\be = \de_0(1-\theta)^{\frac{1}{2}}.
\end{array}\]
According to \cite[Lemma 1, p.28]{Cop} we get $|w(t,y)| \le 2\de_0L^{-1}(1-(1-\th)^{1/2})e^{-\be t}$ that is
\[
	\sup_{t\ge 0}|u^+_N(t+T,y)-w_+(y)|e^{\be t} \le 2\de_0L^{-1}(1-(1-\th)^{1/2})\le \de_0\th L^{-1} = K\rho
\]
and the bound is independent of $y\in\R^m$.
\medskip

Now we consider $w_y(t,y)=u^+_{N,y}(t+T,y)-w'_+(y)$.

Differentiating $\dot u^+_N(t+T,y) = f_N(u^+_N(t+T,y) ,y)$ with respect to $y$ and using $f_N(w_+(y),y)=0$ we see that
\[\begin{array}{l}
	\dot w_y(t,y) 	\\
	\qquad = f_{N,x}(u_N^+(t+T,y),y) [w_y(t,y)+w'_+(y)] +  f_{N,y}(u_N^+(t+T,y),y)	\\
	\qquad = f_{N,x}(w_+(y),y)w_y(t,y) 	\\
	\qquad\qquad + [f_{N,x}(u_N^+(t+T,y),y) -f_{N,x}(w_+(y),y)]w_y(t,y)	\\
	\qquad\qquad  + [f_{N,x}(u_N^+(t+T,y),y)-f_{N,x}(w_+(y),y)]w'_+(y) 	\\
	\qquad\qquad  + f_{N,y}(u_N^+(t+T,y),y)  - f_{N,y}(w_+(y),y)
\end{array}\]
that is $w_y(t,y)$ is a solution of:
\[
	\dot x = f_{N,x}(w_+(y),y)x + b(t,x,y)
\]
where
\[\begin{array}{l}
	b(t,x,y) := [f_{N,x}(u_N^+(t+T,y),y) -f_{N,x}(w_+(y),y)]x			\\
	\qquad  + [f_{N,x}(u_N^+(t+T,y),y)-f_{N,x}(w_+(y),y)]w'_+(y) 	\\
	\qquad  + f_{N,y}(u_N^+(t+T,y),y)  - f_{N,y}(w^+(y),y)
\end{array}\]
bounded on $t\ge 0$. Let $L$ be a Lipschitz constant for $f_{N,x}(x,y)$ and $f_{N,y}(x,y)$. Now we assume that $3KL\rho<\de_0$.
We have
\[
	|b(t,x,y)|\le L\rho |x|  + KL\rho (|w'_+(y)|+1)e^{-\be t}
\]
and
\[\begin{array}{l}
	\dis w_y(t,y) = U_0(t,y)P^0_+(y)w_y(0,y) + \int_0^t U_0(t,y)P^0_+(y)U_0(s,y)^{-1} b(s,w_y(s,y),y) ds		\\
	\dis\qquad - \int_t^\infty U_0(t,y)(\I-P^0_+(y))U_0(s,y)^{-1} b(s,w_y(s,y),y) ds.
\end{array}\]
Then
\[\begin{array}{l}
	\dis |w_y(t,y)| \le Ke^{-\de_0 t}|w_y(0,y)| + 		\\
	\qquad \qquad \dis \int_0^\infty KL\rho e^{-\de_0(t-s)} (|w_y(s,y)| + K(|w'_+(y)|+1)e^{-\be s})  ds	\\
	\dis \le Ke^{-\de_0 t}|w_y(0,y)| + 2K^2L\rho(|w'_+(y)|+1)\de(\de^2-\be^2)e^{-\be t} 	\\
	\qquad \qquad \dis + KL\rho\int_0^\infty e^{-\de_0|t-s|)}|w_y(s,y)|ds	\\
	\le \dis Ce^{-\be t} + KL\rho\int_0^\infty e^{-\be|t-s|} |w_y(s,\bar y)| ds	
\end{array}\]
where
\[
	C = K|w_y(0,y)| + 2\de K^2L\rho(\de^2-\be^2)^{-1}(|w_+'(y)| +1).
\]
We claim that $2KL\rho\be^{-1} < 1$. First, as $1-x<(1-x)^{\frac{1}{2}}$, for $0<x<1$, we have
\[
	\de_0-KL\rho < \beta.
\]
Then, since $\frac{x}{1-x}$ is increasing on $0\le x\le 1$ and $KL\rho\de^{-1}<\frac{1}{3}$:
\[\begin{array}{l}
	2KL\rho\be^{-1} < \frac{2KL\rho}{\de_0 - KL\rho} = \frac{2KL\rho\de_0^{-1}}{1 - KL\rho\de_0^{-1}}\le
	2 \frac{\frac{1}{3}}{1-\frac{1}{3}} = 1.
\end{array}\]
Then, applying again \cite[Lemma 1, p.28]{Cop} we get
\[
	|w_y(t,y)|\le \tilde\mu C e^{-\tilde\be t}
\]
for $t\ge 0$, where
\[\begin{array}{l}
	\tilde\mu = \tilde\th^{-1}[1-(1-2\tilde\th)^\frac{1}{2}]	\\
	\tilde\th = KL\rho\be^{-1}	\\	\tilde\be = \be(1-2\tilde\th)^{-1}.
\end{array}\]
Hence $|w_y(t,y)|e^{\tilde\be t}$ is bounded on $\R_+$, uniformly with respect to $y$ since
\[
	w(0,y)=u_N(T,y)-w_+(y) = \int_{t_N(y)}^T f_N(u(s,y),y)ds
\]
and $w_+(y)$ are bounded together with their derivatives, uniformly with respect to $y\in\R^m$. The proof for the higher order derivatives follows the same line.
\end{remark}

\end{document}